\newtheorem{theorem}{Theorem}[section]
\newtheorem{proposition}[theorem]{Proposition}
\newtheorem{lemma}[theorem]{Lemma}
\newtheorem{claim}[theorem]{Claim}
\newtheorem{corollary}[theorem]{Corollary}
\newtheorem{observation}[theorem]{Observation}
\newtheorem{definition}[theorem]{Definition}
\newtheorem{remark}[theorem]{Remark}
\begin{document}

\title{
On the local structure of oriented graphs --\\ a case study in flag algebras
}
\author{
Shoni Gilboa
\thanks{Department of Mathematics and Computer Science, The Open University of Israel, Raanana 43107, Israel} 
\and
Roman Glebov
\thanks{Department of Computer Science, Ben Gurion University of the Negev, Beer Sheva 84105, Israel.} 
\and
Dan Hefetz
\thanks{Department of Computer Science, Ariel University, Ariel 40700, Israel. Research supported by ISF grant 822/18.} \and
Nati Linial
\thanks{School of Computer Science and Engineering, Hebrew University, Jerusalem 91904, Israel.} 
\and 
Avraham Morgenstern
\thanks{School of Computer Science and Engineering, Hebrew University, Jerusalem 91904, Israel.}}

\maketitle
\begin{abstract}
Let $G$ be an $n$-vertex oriented graph. Let $t(G)$ (respectively $i(G)$) be the probability that a random set of $3$ vertices of $G$ spans a transitive triangle (respectively an independent set). We prove that $t(G) + i(G) \geq \frac{1}{9}-o_n(1)$. 
Our proof uses the method of flag algebras that we supplement with several steps that make it more easily comprehensible. We also prove a stability result and an exact result. Namely, we describe an extremal construction, prove that it is essentially unique, and prove that if $H$ is sufficiently far from that construction, then $t(H) + i(H)$ is significantly larger than $\frac{1}{9}$.  

We go to greater technical detail than is usually done in papers that rely on flag algebras. Our hope is that as a result this text can serve others as a useful introduction to this powerful and beautiful method.
\end{abstract}

\begin{quote}
\textbf{Keywords:} Induced densities, Flag algebras. 
\end{quote}

\section{Introduction}\label{sec:intro}

More than sixty years ago, Goodman~\cite{goodman} proved a quantitative Ramsey-type result for triangles. He determined the minimum over all $n$-vertex (undirected) graphs of the number of triples of vertices which form a triangle or an independent set. It readily follows from his result that the density of triangles plus the density of independent triples in a graph is asymptotically at least $\frac{1}{4}$. It is natural to look for an analogous statement for directed graphs. Goodman's Theorem clearly applies for directed graphs if one considers both transitive triangles and cyclic triangles. Since, moreover, a transitive tournament admits no cyclic triangles and no independent triples, the only related quantity which may be of interest is the minimum over all $n$-vertex directed graphs of the number of triples of vertices which form a transitive triangle or an independent set.

We start with the asymptotic version of the problem. For simplicity we consider only oriented graphs, i.e., directed graphs having no loops and no multiple (parallel or anti-parallel) edges. For an oriented graph $G$, denote by $t(G)$ (respectively  $i(G)$) the probability that a randomly chosen set of $3$ vertices of $G$ induces a transitive triangle (respectively an independent set). For every positive integer $n$, let 
$$
\tau(n) = \min \{t(G) + i(G) : G \textrm{ is an oriented graph on } n \textrm{ vertices}\}.
$$

\begin{observation} \label{obs::limitoftau}
$\left(\tau(n)\right)_{n=3}^{\infty}$ is a non-decreasing sequence.
\end{observation}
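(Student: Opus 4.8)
The plan is to use the standard vertex-deletion averaging argument, exploiting the fact that both $t$ and $i$ are induced $3$-vertex densities. Fix $n \geq 4$ and let $G$ be an $n$-vertex oriented graph attaining the minimum, so that $t(G) + i(G) = \tau(n)$; it suffices to show $t(G) + i(G) \geq \tau(n-1)$. For each vertex $v$ of $G$, the induced subgraph $G - v$ is an oriented graph on $n-1$ vertices, hence by definition $t(G-v) + i(G-v) \geq \tau(n-1)$. Therefore it is enough to prove the identity
$$
t(G) + i(G) = \frac{1}{n} \sum_{v} \bigl( t(G-v) + i(G-v) \bigr),
$$
the sum ranging over all $n$ vertices $v$ of $G$, after which monotonicity follows at once by averaging the lower bounds $t(G-v)+i(G-v)\ge\tau(n-1)$.

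To establish the identity, write $T(H)$ (respectively $I(H)$) for the number of unordered triples of vertices of $H$ that induce a transitive triangle (respectively an independent set), so that $t(H) = T(H)/\binom{|V(H)|}{3}$ and likewise for $i$. A fixed triple $S$ of vertices of $G$ survives in $G - v$ for exactly the $n - 3$ vertices $v \notin S$, and which of the two configurations (if any) $S$ induces depends only on $G$ and is unaffected by deleting a vertex outside $S$. Summing over $v$ thus gives $\sum_{v} T(G-v) = (n-3)\, T(G)$ and $\sum_{v} I(G-v) = (n-3)\, I(G)$. Dividing through by $n\binom{n-1}{3}$ and invoking the elementary identity $(n-3)\binom{n}{3} = n\binom{n-1}{3}$ turns these into the two halves of the displayed equation, which completes the argument.

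I do not anticipate a real obstacle here; the only point requiring care is the bookkeeping of the normalizing binomial coefficients — one must check that the weight $\frac1n$ together with $(n-3)\binom n3 = n\binom{n-1}3$ makes the average of the $(n-1)$-vertex densities equal to, and not merely proportional to, the $n$-vertex density. (The same computation shows that any fixed-size induced-density functional enjoys this averaging identity, so the monotonicity of $\tau$ is just the instance of it for $t+i$ combined with the minimality in the definition of $\tau(n)$.)
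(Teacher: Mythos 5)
Your proof is correct and follows the same route as the paper: the paper's one-line argument rests on exactly the averaging identity $t(G)+i(G)=\frac1n\sum_v\bigl(t(G-v)+i(G-v)\bigr)$ that you verify in detail via the count $\sum_v T(G-v)=(n-3)T(G)$ and the identity $(n-3)\binom n3=n\binom{n-1}3$. The only difference is that the paper states the identity without spelling out this bookkeeping.
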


\begin{proof}
Let $n \geq 3$ be an integer and let $G = (V,E)$ be an oriented graph on $n+1$ vertices for which $t(G) + i(G) = \tau(n+1)$. Then
\begin{equation*}
\tau(n+1) = t(G) + i(G) = \textstyle\frac{1}{n+1} \sum_{v \in V} (t(G \setminus v) + i(G \setminus v)) \geq \tau(n) \,. 
\qedhere\end{equation*}
\end{proof}
Since the sequence $(\tau(n))_{n=3}^{\infty}$ is non-decreasing and is bounded from above by 1, it has a limit which we denote by $\tau$. 
Our main result is the following Goodman-type inequality.

\begin{theorem}\label{main}
$\tau = \frac{1}{9}$. That is, every $n$-vertex oriented graph $G$ satisfies
$$
t(G) + i(G) \geq \textstyle\frac{1}{9} - o_n(1)
$$
and, moreover, this bound is tight.
\end{theorem}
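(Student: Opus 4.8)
The plan is to prove the two halves of the statement separately: tightness of the bound by an explicit construction, and the inequality $t(G)+i(G)\ge\tfrac19-o_n(1)$ by the method of flag algebras. (One could instead hunt for a Goodman-style identity balancing in- and out-degrees across a tripartition, but the flag-algebra route is more systematic and, importantly, is the one that also yields the stability and exact refinements announced above.)

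\emph{Tightness.} Let $C_n$ be the oriented graph on $V_1\cup V_2\cup V_3$, with $|V_1|,|V_2|,|V_3|$ as equal as possible, containing an arc from every vertex of $V_i$ to every vertex of $V_{i+1}$ for each $i\in\{1,2,3\}$ (indices modulo $3$) and no other arcs. Two vertices of $C_n$ are non-adjacent exactly when they lie in a common part, so a uniformly random triple induces an independent set if and only if its three vertices all fall in one part, which happens with probability $\bigl(\sum_{i=1}^{3}\binom{|V_i|}{3}\bigr)/\binom n3=\tfrac19+o_n(1)$. On the other hand, a triple inducing a directed triangle must meet all three parts and then spans a \emph{cyclic} triangle, so $C_n$ has no transitive triangle whatsoever. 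Hence $t(C_n)+i(C_n)=\tfrac19+o_n(1)$, which together with Observation~\ref{obs::limitoftau} gives $\tau\le\tfrac19$.

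\emph{The inequality, via a flag-algebra certificate.} Here $t(G)=d(T_3;G)$ and $i(G)=d(\overline{K_3};G)$ are the induced densities of the transitive tournament $T_3$ and of the empty oriented graph on three vertices. Fix a small level $N$ (we take $N=4$). By the standard reduction it suffices to produce: a short list of types $\sigma$---the empty type and the two types on two vertices (a single arc, and a non-adjacent pair); for each $\sigma$, a list of $\sigma$-flags $F^\sigma_1,\dots,F^\sigma_{m_\sigma}$ on the appropriate number of vertices (two vertices for the empty type, three for the two-vertex types, so that a product of two flags is supported on $N=4$ vertices); and, for each $\sigma$, a positive semidefinite matrix $M_\sigma\in\R^{m_\sigma\times m_\sigma}$, such that after expanding everything as a combination of induced densities of $N$-vertex oriented graphs---using $\tfrac19=\tfrac19\sum_H d(H;G)$, the averaging identities $d(H;G)=\sum_{H'}p(H,H')\,d(H';G)$ for $|V(H)|<N$, and the identity writing a product of two $\sigma$-flags as an average over $\sigma$-rooted $N$-vertex graphs---the quantity
\[
d(T_3;G)+d(\overline{K_3};G)-\tfrac19-\sum_\sigma\Big\langle\;\textstyle\sum_{j,k}(M_\sigma)_{jk}\,F^\sigma_j\,F^\sigma_k\;\Big\rangle_\sigma
\]
has only non-negative coefficients, $\langle\cdot\rangle_\sigma$ denoting the averaging (unlabeling) operator. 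Indeed, each averaged square is $\ge-o_n(1)$ since $M_\sigma\succeq0$, and a non-negative combination of densities is $\ge0$, so the displayed quantity is $\ge-o_n(1)$, i.e.\ $t(G)+i(G)\ge\tfrac19-o_n(1)$.

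\emph{Finding the certificate, and the main obstacle.} After the expansion, the requirement on the $M_\sigma$ becomes a finite system of linear inequalities in their entries together with $M_\sigma\succeq0$---a semidefinite feasibility problem over the (finitely many, $42$) oriented graphs on four vertices. I would solve it numerically and then round the solution to an exact one that remains positive semidefinite and keeps all density coefficients non-negative. This rounding is the crux and is genuinely delicate because the bound is attained: the limit object of the extremal sequence $C_n$ must annihilate every averaged square, so each $M_\sigma$ is forced to be singular with a kernel dictated by the structure of $C_n$; one must therefore identify those kernels, pass to their orthogonal complements, and only then round and re-verify $M_\sigma\succeq0$. Once a verified certificate is in hand the inequality---and, with the construction, the equality $\tau=\tfrac19$---follows immediately; and tracking which four-vertex oriented graphs are ``tight'' for this certificate (coefficient $0$, and zero weight from every square when evaluated on the extremal object) is precisely what drives the subsequent stability and exact results.
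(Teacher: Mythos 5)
Your overall strategy is the same as the paper's: tightness via the balanced blowup of a cyclic triangle, and the lower bound via a flag-algebra certificate at level $N=4$ with the empty, edge, and non-edge types, found by solving an SDP numerically and rounding after projecting away the kernel forced by the extremal object. However, there is a concrete gap in the rounding plan. You write that each $M_\sigma$ is forced to be singular ``with a kernel dictated by the structure of $C_n$'' and propose to project onto the orthogonal complement of those kernel vectors. The construction $C_n={\cal B}_n$ supplies only \emph{one} kernel vector per block (three in total), but the common kernel of all $\frac19$-certificates is five-dimensional: the non-edge block $Q_{\bar E}$ has two further mandatory kernel vectors. These do not come from ${\cal B}_n$ itself but from the perturbed family ${\cal B}_n^{\varepsilon}$ obtained by deleting each edge independently with probability $\varepsilon$, which is still optimal up to $O(\varepsilon^2)$ in the second-order term (the ``phantom edge'' phenomenon). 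If you project only onto the complement of the three vectors visible from ${\cal B}_n$, the projected optimal certificate still has near-zero eigenvalues and the naive round-and-reverify step will fail to stay PSD. For the same reason, your criterion for which four-vertex graphs are ``tight'' (zero weight on the extremal object) is too narrow: six of the eleven sharp graphs have density only $\Theta(\varepsilon)$ in ${\cal B}_n^{\varepsilon}$ and density $0$ in ${\cal B}_n$, yet they impose equality constraints $c_i-\langle Q,A_i\rangle=\frac19$ that are needed to pin down the remaining entries of the certificate exactly.

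A secondary point: as written, your argument is a plan rather than a proof. The inequality ultimately rests on exhibiting an explicit PSD matrix and verifying finitely many linear inequalities, and no such matrix is produced; until the rounded certificate is displayed and checked (which is exactly where the ${\cal B}_n^{\varepsilon}$ information becomes indispensable), the lower bound $\tau\ge\frac19$ is not established. The tightness half of your argument is correct and matches the paper's Observation on the blowup construction.
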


The tightness of the bound stated in Theorem~\ref{main} follows from the following observation.

\begin{observation}\label{obs:blowup} 
For every positive integer $n$, let ${\cal B}_n=(V,E)$ be the {\em balanced blowup} of a {\em cyclic triangle}, where $V$ is the disjoint union of sets $V_0, V_1,V_2$ with $|V_i| = \lfloor (n + i )/3 \rfloor$ for every $0 \leq i \leq 2$, and $E$ is comprised of all directed edges from $V_i$ to $V_{i+1 \bmod 3}$ for every $0 \leq i \leq 2$. For every positive integer $n$, it holds that
\begin{equation*}\label{eq:blowup}
t({\cal B}_n)+i({\cal B}_n)<\textstyle\frac{1}{9}.
\end{equation*}
Consequently, $\tau\leq\frac{1}{9}$.
\end{observation}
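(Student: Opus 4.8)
The plan is to evaluate $t(\mathcal{B}_n)+i(\mathcal{B}_n)$ exactly and then bound the result by an elementary estimate. I would begin with two structural observations about $\mathcal{B}_n$. Since each $V_i$ is an independent set while every pair of vertices lying in two distinct parts is adjacent, a triple of vertices induces a triangle (i.e.\ all three of its pairs are adjacent) precisely when it contains exactly one vertex from each of $V_0,V_1,V_2$; but then its three edges form the cyclic pattern $V_0\to V_1\to V_2\to V_0$, so such a triangle is cyclic and never transitive, whence $t(\mathcal{B}_n)=0$. Likewise, a triple is independent exactly when no two of its vertices are adjacent, which — again because distinct parts are completely joined — happens if and only if all three vertices lie in a common part. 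Consequently
\[
t(\mathcal{B}_n)+i(\mathcal{B}_n)=i(\mathcal{B}_n)=\binom{n}{3}^{-1}\left(\binom{|V_0|}{3}+\binom{|V_1|}{3}+\binom{|V_2|}{3}\right).
\]

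It remains to show that this ratio is strictly below $\tfrac19$. Writing $q:=\lceil n/3\rceil$, the balancedness of the partition gives $|V_i|\le q$ for each $i$, so the numerator above is at most $3\binom{q}{3}$; on the other hand $3q\le n+2$, so $n\ge 3q-2$, and for $n\ge 4$ (hence $q\ge 2$ and $3q-2\ge 3$) monotonicity of binomial coefficients yields $\binom{n}{3}\ge\binom{3q-2}{3}$. Thus it suffices to verify the purely arithmetic inequality $27\binom{q}{3}<\binom{3q-2}{3}$, which after cancelling the common positive factor $(q-1)/2$ reduces to $9q(q-2)<(3q-2)(3q-4)$, i.e.\ $0<8$; for the few small values of $n$ with $q\le 1$ the numerator already vanishes and the bound is trivial. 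This establishes $t(\mathcal{B}_n)+i(\mathcal{B}_n)<\tfrac19$ for every $n$. Since $\tau(n)\le t(\mathcal{B}_n)+i(\mathcal{B}_n)$ by the definition of $\tau(n)$, we get $\tau(n)<\tfrac19$ for every $n$, and therefore $\tau=\lim_{n\to\infty}\tau(n)\le\tfrac19$.

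The content of the argument lies entirely in the two structural observations; the only place to take a little care is the final estimate, where one should resist splitting into the three residue classes of $n$ modulo $3$ and instead package them uniformly via $q=\lceil n/3\rceil$ together with the crude bounds $|V_i|\le q$ and $n\ge 3q-2$, after which a single quadratic inequality finishes the job. I do not anticipate any genuine obstacle here — the point of the observation is simply that the cyclic blowup, having no transitive triangles at all and only the unavoidable monochromatic independent triples, is the natural extremal candidate for Theorem~\ref{main}.
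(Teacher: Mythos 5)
Your proposal is correct and follows essentially the same route as the paper: show $t(\mathcal{B}_n)=0$, identify the independent triples as exactly the within-part triples, and bound the resulting ratio by $\tfrac19$. The only difference is that you actually verify the final arithmetic inequality (via $q=\lceil n/3\rceil$ and the reduction to $9q(q-2)<(3q-2)(3q-4)$), which the paper simply asserts.
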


\begin{proof}
Clearly, the oriented graph ${\cal B}_n$ contains no transitive triangles and so $t({\cal B}_n) = 0$. Moreover, ${\cal B}_n$ contains exactly $\binom{\lfloor n/3 \rfloor}{3} + \binom{\lfloor (n + 1)/3 \rfloor}{3} + \binom{\lfloor (n + 2)/3 \rfloor}{3}$ independent triples and so 
\begin{equation*}
t({\cal B}_n)+i({\cal B}_n)=i({\cal B}_n) = \frac{\binom{\lfloor n/3 \rfloor}{3} + \binom{\lfloor (n + 1)/3 \rfloor}{3} + \binom{\lfloor (n + 2)/3 \rfloor}{3}}{\binom{n}{3}} < \frac{1}{9}.
\qedhere\end{equation*}
\end{proof}

In order for our methods to succeed, we must find all the asymptotically optimal oriented graphs. There are earlier examples in the flag-algebra literature where certain slight variations of the construction must be considered optimal as well, a phenomenon that is called {\em phantom edge} in~\cite{pikhurko-vaughan} and~\cite{flagmatic}.

Concretely, it is possible to delete a few edges from ${\cal B}_n$ without creating any new independent triples. Clearly no transitive triangles are created. 
Let ${\cal B}_n^{\varepsilon}$ be the random oriented graph that results upon randomly deleting each edge of ${\cal B}_n$ independently with probability $\varepsilon>0$. This oriented graph clearly contains no transitive triangles, and with high probability only $O(\varepsilon^2 n^3)$ new independent triples emerge (this follows, e.g., from Azuma's inequality). Hence, with $\varepsilon \to 0$, this oriented graph is optimal up to the second order term. It will be crucial to consider this altered construction as well to derive some necessary information.

We also prove a stability version of Theorem~\ref{main}. As usual, we say that two $n$-vertex oriented graphs $G$ and $H$ are $\varepsilon$-close if there are sets $E_1, E_2 \subseteq \binom{V(G)}{2}$ such that $|E_1| + |E_2| \leq \varepsilon n^2$ and $(G \setminus E_1) \cup E_2$ is isomorphic to $H$.  

\begin{theorem} \label{thm:stabil}
For every $\varepsilon > 0$, there exist $n_0$ and $\delta > 0$ such that if
$$
t(G) + i(G) \leq \textstyle\frac{1}{9} + \delta,
$$
for some $n$-vertex oriented graph $G$ with $n \geq n_0$,
then $G$ is $\varepsilon$-close to ${\cal B}_n$. 
\end{theorem}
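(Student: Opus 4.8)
The plan is to argue by contradiction via a compactness (graph‑limit) argument, and then to show that equality in the flag‑algebra certificate produced in the proof of Theorem~\ref{main} forces the limit object to be exactly the balanced blowup of a cyclic triangle. Suppose the assertion fails for some fixed $\varepsilon>0$. Then for every $\delta>0$ and every $n_0$ a counterexample exists, so we may choose a sequence of oriented graphs $(G_k)_{k\ge1}$ with $|V(G_k)|=n_k\to\infty$, with $t(G_k)+i(G_k)\to\frac19$, and such that no $G_k$ is $\varepsilon$‑close to $\mathcal B_{n_k}$. By the limit theory of dense oriented graphs (the oriented analogue of the theory of graphons), after passing to a subsequence we may assume $(G_k)$ converges to a limit object $W$ (a digraphon), i.e. $d(F;G_k)\to d(F;W)$ for every fixed oriented graph $F$; in particular $t(W)=\lim t(G_k)$, $i(W)=\lim i(G_k)$, and $t(W)+i(W)=\frac19$.

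The flag‑algebra proof of Theorem~\ref{main} produces, implicitly or explicitly, a certificate: nonnegative coefficients and a finite collection of flags and of small oriented graphs such that, after expansion, $t(G)+i(G)-\frac19$ equals a nonnegative combination of square terms (from the flags) and of densities of those small oriented graphs, up to an additive $o_n(1)$ error; moreover the transitive triangle occurs in it with a strictly positive coefficient. Evaluating this identity on $W$, where the error term vanishes, and using $t(W)+i(W)=\frac19$, forces every summand to be zero. In particular $t(W)=0$, $i(W)=\frac19$, every square term vanishes on $W$, and $d(F;W)=0$ for each oriented graph $F$ in the finite family $\mathcal F$ of ``slack'' configurations appearing in the certificate.

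It remains to decode this into structure. Vanishing of the square terms excludes quasirandom (genuinely fractional) behaviour, and together with the vanishing of the $\mathcal F$‑densities — which include the relevant three‑ and four‑vertex obstructions — it forces $W$ to be $\{0,1\}$‑valued and to be the blowup of a tournament: the ground space partitions, up to measure zero, into finitely many classes, each spanning an independent set, with all pairs between two distinct classes present and consistently oriented. Since $t(W)=0$, the underlying tournament has no transitive triple; as every tournament on at least four vertices contains one, there are at most three nonempty classes, and since the only tournament on three vertices with no transitive triple is the cyclic triangle — while a blowup into at most two independent classes has $i$‑density at least $\frac14$ by convexity — $W$ must be the blowup of a cyclic triangle into exactly three classes, of measures $p_0,p_1,p_2$. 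Then $i(W)=p_0^3+p_1^3+p_2^3=\frac19$ forces $p_0=p_1=p_2=\frac13$, so $W$ is precisely the balanced blowup of a cyclic triangle. (The ``phantom edge'' graphs $\mathcal B_n^{\varepsilon}$ converge to this same $W$ when first $k\to\infty$ and then $\varepsilon\to0$, so they create no additional limit point; they matter only for setting up the certificate correctly.)

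Finally we transfer back to the finite graphs. Since $W$ is a step function with three equal parts, standard inverse counting (a weak regularity / counting‑lemma argument, or the directed version of the removal lemma) shows that $G_k\to W$ implies $G_k$ is $o_k(1)$‑close, in edit distance, to the balanced blowup of a cyclic triangle on $n_k$ vertices, hence to $\mathcal B_{n_k}$ itself; that the three parts must be nearly balanced also follows directly from the fact that a blowup with part fractions $(\alpha_0,\alpha_1,\alpha_2)$ has $i$‑density at least $\frac19+c\sum_i(\alpha_i-\frac13)^2$, so near‑optimality forces $\sum_i(\alpha_i-\frac13)^2=o_k(1)$ and rebalancing costs $o(n_k^2)$ edits. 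For $k$ large this contradicts the choice of $G_k$. The main obstacle is the decoding step of the third paragraph: extracting from the (numerically obtained and somewhat opaque) semidefinite certificate the exact finite list of densities that must vanish, and then proving the clean dichotomy ``these densities vanish $\Rightarrow$ $W$ is a blowup of a tournament''; this is also where one must verify that the coefficients used in the certificate are compatible with the near‑optimality of the phantom‑edge construction.
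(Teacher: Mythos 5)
Your compactness route is genuinely different from the paper's, but it has a real gap at exactly the point you flag as ``the main obstacle,'' and that gap is the whole content of the theorem. The paper never decodes the full certificate into a structural description of the limit object. It extracts from the certificate only one consequence: the slacks $\eta_i = c_i - \langle Q, A_{G_i}\rangle - \frac19$ are strictly positive for the four $4$-vertex graphs whose underlying graph is $K_4$, whence $p(K_4,G^{(0)}) = O(\delta)$ (Lemma~\ref{lem:first_step}). It then applies the removal lemma, imports the \emph{quantitative} stability theorem for the undirected problem (Theorem 5.1 of~\cite{DHMNS}, stated here as Theorem~\ref{thm:DHMNS}) together with Andr\'asfai--Erd\H{o}s--S\'os to get a near-balanced tripartition after deleting a small bad set $B$, and finally runs a bespoke counting argument (Lemma~\ref{lem:last_step}) to show the edges between parts are consistently cyclically oriented. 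Your step ``vanishing of the square terms and of the non-sharp densities forces $W$ to be $\{0,1\}$-valued and a blowup of a tournament'' is asserted, not proved, and it is not immediate: the eleven sharp graphs whose densities may survive in the limit include six configurations ($G_3, G_5, G_{15}, G_{19}, G_{23}, G_{25}$) that are induced subgraphs of ${\cal B}_n^{\varepsilon}$ but not of ${\cal B}_n$, so the vanishing conditions alone do not pin the limit down to a tournament blowup; ruling out positive density of those six in an exact minimizer requires a further (second-order) argument that you do not supply. Likewise, deducing consistent orientation of the cross-pairs from the vanishing of the square terms is precisely the content of Lemma~\ref{lem:last_step} and needs an actual proof.

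Two further remarks. First, a cleaner patch of your argument exists and is essentially what the paper does: from the limit identity you correctly derive $t(W)=0$ and $i(W)=\frac19$ (all sharp graphs are transitive-triangle-free), and then one can invoke the known undirected stability for $K_4$-free graphs rather than decode the certificate; but one must still handle the orientations separately. Second, even if your qualitative compactness argument were completed, it would prove Theorem~\ref{thm:stabil} as stated but not the explicit error terms ($|B|\le 5\varepsilon^2 n$, the degree conditions, the $12\varepsilon n^2$ bound on wrongly oriented edges) that Proposition~\ref{prop:stabil} records and that the proof of Theorem~\ref{thm:exact} consumes; the paper's quantitative route is chosen deliberately for that reason.
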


Building on Theorem~\ref{thm:stabil} we can prove that ${\cal B}_n$ in essentially the unique extremal construction. 
This is in stark contrast to Goodman's inequality for which the family of extremal constructions is very rich. More precisely, we prove that a sufficiently large oriented graph is extremal for the number of transitive triangles plus the number of independent triples if and only if it belongs to the rigid family $\mathcal{E}_n$ which we will now describe. 
Let $\mathcal{M}_n$ denote the family of all triangle-free $n$-vertex oriented graphs which are the union of three matchings: one between $V_0$ and $V_1$, one between $V_1$ and $V_2$, and one between $V_2$ and $V_0$. Let $\mathcal{E}_n = \{\mathcal{B}_n \setminus H : H \in \mathcal{M}_n\}$. It is evident that $t(G) + i(G) = t(\mathcal{B}_n) + i(\mathcal{B}_n)$ for every $G \in \mathcal{E}_n$. It remains to prove that every large extremal oriented graph lies in $\mathcal{E}_n$. 
\begin{theorem}\label{thm:exact}
There exists an integer $n_0$ such that for every $n>n_0$, if $G$  minimizes $t(G) + i(G)$ among all $n$-vertex oriented graphs, then $G \in \mathcal{E}_n$.
\end{theorem}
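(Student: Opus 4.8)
The plan is to deduce Theorem~\ref{thm:exact} from the stability result Theorem~\ref{thm:stabil} by a local analysis of an extremal graph. Let $G$ minimize $t(G)+i(G)$ among $n$-vertex oriented graphs. By Observation~\ref{obs:blowup}, $t(G)+i(G)=\tau(n)\le t(\mathcal{B}_n)+i(\mathcal{B}_n)<\tfrac19$, and since $\tau(n)\to\tfrac19$ we may, for all large $n$, invoke Theorem~\ref{thm:stabil} with parameter $\varepsilon=\varepsilon_n\to0$ to obtain a partition $V(G)=W_0\cup W_1\cup W_2$ with $\bigl|\,|W_i|-n/3\,\bigr|=o(n)$ under which the number of \emph{bad} edges — those inside some $W_i$, those directed from $W_{i+1}$ to $W_i$, and those forward edges between $W_i$ and $W_{i+1}$ that are \emph{absent} — is $o(n^2)$. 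I claim it is enough to prove that an extremal $G$ has \emph{no} edge inside a part and \emph{no} edge directed from $W_{i+1}$ to $W_i$. Granting this, $G=B\setminus H$, where $B$ is the blow-up of the cyclic triangle on $(W_0,W_1,W_2)$ and $H$ is the set of absent forward edges; then $t(G)=0$, deleting edges creates no transitive triangle, and a triple that is independent in $G$ but not in $B$ must either be a directed triangle of $H$ or consist of two $H$-edges sharing a vertex whose other two endpoints lie in a common part. Hence $i(G)=i(B)+(\text{number of such configurations})\ge i(B)\ge i(\mathcal{B}_n)\ge\tau(n)=i(G)$, the middle inequality holding by convexity of $\binom{\cdot}{3}$; so all of these are equalities, forcing the partition to be balanced and $H$ to be triangle-free and to meet each pair of parts in a matching. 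That is, $H\in\mathcal{M}_n$ and $G\in\mathcal{E}_n$.

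The engine of the reduced statement is an exact formula for the number $f_G(v)$ of triples through a vertex $v$ that span a transitive triangle or an independent set. Writing $N^+(v),N^-(v),N^0(v)$ for the out-neighbours, in-neighbours and non-neighbours of $v$, and $e(X)$, $e(X,Y)$ for the number of edges inside $X$ and the number directed from $X$ to $Y$, a short case check on the pair $\{x,y\}$ gives
$$
f_G(v)=\binom{|N^0(v)|}{2}-e\bigl(N^0(v)\bigr)+e\bigl(N^+(v)\bigr)+e\bigl(N^-(v)\bigr)+e\bigl(N^-(v),N^+(v)\bigr).
$$
Since $G-v$ together with any choice of link for $v$ is an $n$-vertex oriented graph, extremality forces, for every $v$, the triple $(N^+(v),N^-(v),N^0(v))$ to minimize the right-hand side over all partitions of $V(G)\setminus\{v\}$; in particular $f_G(v)\le f^\ast(v)$, the value obtained when $v$ takes the \emph{canonical} link into its own part $W_i$ (that is, $N^+=W_{i+1}$, $N^-=W_{i-1}$, $N^0=W_i\setminus\{v\}$), and $f^\ast(v)=\binom{|W_i|-1}{2}$ plus a quantity depending only on the inside-part and backward edges of $G-v$. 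Expanding $f_G(v)$ for an arbitrary link of $v$ in terms of the numbers of inside-part edges, backward edges ($W_{i+1}\to v$ or $v\to W_{i-1}$) and absent forward edges at $v$, one finds that each inside-part or backward edge contributes $\sim\tfrac n3$ to the excess $f_G(v)-f^\ast(v)$, whereas absent forward edges contribute only at second order. Consequently, \emph{provided} the error terms arising from the imperfect structure of $G-v$ are $o(n)$, extremality forces $v$ to have no inside-part edge and no backward edge — and, ranging over all $v$, this is precisely the reduced statement.

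The main obstacle is exactly that proviso: a priori those error terms have order $n$, the same order as the penalty of a single inside-part or backward edge, so the per-vertex argument is not yet self-contained. I would close this gap by a two-stage bootstrap. First, a global count shows that an extremal $G$ has only $o(n)$ inside-part and backward edges: each such edge, together with $\sim\tfrac n3$ ``typical'' third vertices, forms a transitive triangle, and since the partition is already $o(n^2)$-close to $\mathcal{B}_n$ the overlaps between these families are negligible, so more than $o(n)$ of them would push $t(G)$ above what $\tau(n)<\tfrac19$ permits; the possibly numerous absent forward edges must be treated with care here, using that a single absent forward edge lies in no new independent triple and hence does not disturb this count. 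With this improved bound the error terms in the per-vertex expansion drop to $o(n)$, and the argument of the previous paragraph applies verbatim to yield that $G$ has \emph{no} inside-part or backward edge; the conclusion $G\in\mathcal{E}_n$ then follows as in the first paragraph. Making the two-stage estimate precise, and in particular controlling the interaction with the absent forward edges at every stage, is the technical heart of the proof.
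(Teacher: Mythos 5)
Your overall architecture matches the paper's (stability, then local arguments forcing the clean tripartite structure, then an exact count pinning down $\mathcal{E}_n$), your identity for $f_G(v)$ is correct, and your closing count — once one knows there are no inside-part and no backward edges — is a valid consolidation of the paper's Lemma~\ref{lem::properties145}. The gap is in how you propose to reach that clean structure.

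Stage 1 of your bootstrap does not work. An inside-part edge $xy$ in $W_i$ does lie in roughly $n/3$ (in fact up to $2n/3$, using both adjacent parts) transitive triangles, but it also destroys up to $n/3$ independent triples inside $W_i$, so its net effect on $T_3(G)+I_3(G)$ cancels at first order; the bound $t(G)+i(G)<\frac19$ therefore tolerates far more than $o(n)$ such edges. Worse, the transitive-triangle count requires \emph{both endpoints} of the bad edge to have near-canonical degrees, and stability guarantees this only for all but $o(n)$ vertices; those $o(n)$ atypical vertices can each carry $\Theta(n)$ bad edges, so the global count recovers only the $o(n^2)$ bound you already have. This is exactly why the paper does not argue globally: it first refines the partition so that \emph{every} vertex of $V_i$ has in/out-degree at least $(1/3-o(1))n$ to the correct adjacent parts, quarantining the leftovers in a small set $B$, and then deletes (or reverses) a \emph{single} bad edge, exploiting the factor-of-two asymmetry — deleting an inside-part edge kills at least $(2/3-o(1))n$ transitive triangles while creating at most $(1/3+o(1))n$ independent triples. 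Without the $o(n)$ input, your Stage 2 also breaks: if the other endpoint $u$ of $v$'s bad edge is missing many forward edges, the term $d_{W_{i+1}}(u)$ in the expansion of $f_G(v)-f^\ast(v)$ drops well below $n/3$ and the excess can become negative, so link-minimization at $v$ does not expel the edge. Eliminating such vertices $u$ is precisely the content of the paper's Claim~\ref{cl::smallT3plusI3} and the ensuing three-case analysis in the proof of Lemma~\ref{lem::badEdges}(c) — exactly the part you defer as ``the technical heart'' — so as it stands the proposal does not contain a proof of the reduced statement.
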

The statement of Theorem~\ref{thm:exact} need not apply for small $n$. Consider the oriented graph with vertex set $V = \{0,1,2,3,4,5,6\}$, where for every $0 \leq i \leq 6$ vertex $i$ has a directed edge to $i+1 \bmod 7$ and to $i+3 \bmod 7$. This oriented graph has no independent triple nor a transitive triangle, whereas every $G \in \mathcal{E}_7$ has an independent triple. Similarly, the oriented graph with vertex set $\{0,1,2,3,4,5,6,7\}$ and all directed edges $(i,i+2 \bmod 8)$ and $(i, i+3 \bmod 8)$ for $0 \leq i \leq 7$, has no independent triples and no transitive triangle, whereas every $G \in \mathcal{E}_8$ has two independent triples.

Our proof of Theorem~\ref{main} follows mostly the flagmatic workflow~\cite{flagmatic}. 
We find it worthwhile to describe the entire process, even though most of it is not new. 
Our objective is to depend as little as possible on computer calculations, and rely on  theoretical arguments whenever possible. 
We also chose to write a self-contained paper since we could not find a comprehensive accessible documentation of flagmatic's entire computational process. Even the standard flag algebra arguments are not easy to understand from, e.g., \cite{razborov}, and are couched in logic and algebra terminology beyond what is required to prove Theorem~\ref{main} and other similar results in local combinatorics. 
Also, while \cite{fr-vaughan} provides much of the necessary information, we believe there is need for a more accessible source. Hence, to carry out the more technical parts of this research we had to pave our own path. 
The only available guide to this process that we managed to find was the flagmatic code which is only partially documented and is hard to penetrate. 
We hope that readers can use this paper as a simpler and fully self-contained case study, of proving inequalities in local combinatorics using flag algebra techniques.

We should mention the paper~\cite{pikhurko-vaughan} which provides some further details on the practice of the flag algebra method. Moreover, results in~\cite{pikhurko-vaughan} and, independently, \cite{DHMNS} yield a special case of our Theorem~\ref{main}; namely, that $i(G) \geq \frac{1}{9} - o_n(1)$ for every $n$-vertex oriented graph $G$ with $t(G) = 0$. Indeed, both papers prove that every $K_4$-free $n$-vertex {\em undirected} graph has at least $(\frac{1}{9}-o_n(1))\binom{n}{3}$ independent triples and this is tight. The relevant conclusion follows, since every orientation of $K_4$ contains a transitive triangle, so that the underlying graph of an oriented graph with no transitive triangles must be $K_4$-free. Both~\cite{pikhurko-vaughan} and~\cite{DHMNS}, use the flag algebra method.

\subsection{Flag algebras for the uninitiated}
This subsection deals with graphs as archetypical combinatorial objects, though everything we discuss here applies just as well to a whole range of mathematical objects. 
In fact, in this paper we apply this framework to {\em oriented} graphs. 
Let $H$ be a fixed $k$-vertex graph and let $G$ be a (typically large) graph. We denote by $p(H,G)$ the probability that a randomly chosen set of $k$ vertices in $G$ induces a subgraph that is isomorphic to $H$. 
Let $H_1, \ldots, H_m$ be finite graphs and let $\mathcal{H} = \{H_1, \ldots, H_m\}$. The $\cal H${\em -profile} of $G$ is the vector $\Phi_{\mathcal H}(G) = (p(H_1, G), \ldots, p(H_m, G))$. Understanding $\mathcal {H}$-profiles of large graphs is a key challenge of modern combinatorics. It is usually considered within the framework of {\em extremal graph theory}, or what one might call {\em local combinatorics}. Flag algebras offer a systematic approach to the study of such questions. As previously mentioned, this methodology applies to various combinatorial structures, and in the present paper we focus on oriented graphs. In order to apply the flag algebras method, one must first choose some collection $\cal F$ of $t$-{\em flagged} graphs, i.e., graphs in which some $t$ vertices are labeled $1, \ldots, t$. Associated with $\cal F$ and a graph $Z$ is the {\em flag probability matrix} $A_Z^{\cal F}$ whose rows and columns are indexed by $\cal F$. Let $H_1, \ldots, H_m$ be an arbitrary ordering of all $k$-vertex graphs and let $\mathcal{H} = \{H_1, \ldots, H_m\}$. Suppose that the vector $(p_{H_1}, \ldots, p_{H_m})$ is a limit point of $\cal H$-profiles $\Phi_{\cal H}(G)$ of graphs $G$ whose orders tend to infinity. The key feature of these matrices is that the matrix $\sum_{i=1}^m p_{H_i} \cdot A_{H_i}^{\cal F}$ is positive semi-definite (abbreviated henceforth PSD). By a well-known property of PSD matrices, its inner product with any PSD matrix $Q$ is non-negative. By choosing $Q$ (called below a {\em certificate}) cleverly, we can obtain interesting linear inequalities in the numbers $p_{H_1}, \ldots, p_{H_m}$. As we explain below, it is the proper choice of $Q$ that is the main technical challenge here and in many other papers that rely on the method of flag algebras.

To prove Theorem~\ref{main}, it suffices to find a \emph{$1/9$-certificate}. That is, for some choice of $k$ and ${\cal F}$, we wish to find a PSD matrix $Q$ that satisfies the linear inequality $\langle Q,  A_{H_i}^{\cal F} \rangle \leq t(H_i) + i(H_i) - 1/9$ for every $1 \leq i \leq m$. 
In the linear space of symmetric $|\cal F| \times |\cal F|$ matrices we find an affine subspace that contains all the $1/9$-certificates. 
In order to find a $1/9$-certificate, we run a semidefinite programming (SDP) solver on a computer. Such solvers output a solution of the SDP up to an additive error. This error term can be made arbitrarily small, but decreasing it increases the running time of the solver program. We then carefully `round` the matrix found by the SDP solver and obtain the desired $1/9$-certificate.
 
Rounding must be carried out with special care for those indices $i$ for which  $\langle Q,  A_{H_i}^{\cal F} \rangle \approx t(H_i) + i(H_i) - 1/9$. For other $i$'s, the inequality is strict and we may hope that it will remain true after some perturbation. Similarly, positive eigenvalues of the approximate matrix will hopefully remain positive after perturbation, but near-zero eigenvalues must be treated more carefully for the result to remain PSD.

\medskip
The rest of this paper is organized as follows. In Section~\ref{sec:flags} we introduce some of the foundations of the flag algebras method.
In Section~\ref{sec:framework} we present a family of semidefinite programs. An appropriate solution of such an SDP would imply Theorem~\ref{main}. We also define the notion of a \emph{certificate}. Section~\ref{sec:digression} is a warm-up for the actual proof, where we illustrate the methodology through two different proofs of the asymptotic version of Goodman's Theorem. In addition we provide several proofs of weaker versions of Theorem~\ref{main}. In Section~\ref{sec:back} we start working on our proof of Theorem~\ref{main}. Using a computer, we verify that $\tau$ is very close to $1/9$. The next four sections are dedicated to finding a $\frac{1}{9}$-certificate matrix. It turns out that every $\frac{1}{9}$-certificate matrix has a nontrivial kernel, and that, in fact, the intersection of all such kernels (over all $\frac{1}{9}$-certificates) is a nonempty linear space. In Section~\ref{sec::kernel} we determine this space. In Section~\ref{sec::sharp} we use certain $4$-vertex oriented graphs which are abundant in ${\mathcal B}_n^{\varepsilon}$ to impose additional restrictions on the entries of $Q$. In Section~\ref{sec::projection} we use the common kernel space of Section~\ref{sec::kernel} to project the problem to a space of lower dimension.
In Section~\ref{sec:rounding} we complete the proof of Theorem~\ref{main} by finding an approximate certificate for the projected problem with the aid of the computer, rounding it, and pulling it back to a certificate for the original problem. In Section~\ref{sec:stability} we prove Theorem~\ref{thm:stabil}. In Section~\ref{sec:exact} we use Theorem~\ref{thm:stabil} to prove Theorem~\ref{thm:exact}. Finally, in Section~\ref{sec:concluding} we consider possible directions for future research.

\section{Flags}\label{sec:flags}
A $t$-vertex {\em type} is an oriented graph whose vertices are labeled $1, \ldots, t$. A {\em flag} $F$ over the $t$-vertex type $\sigma$ with $\ell$ {\em petals} is an oriented graph on $t + \ell$ vertices with an isomorphic embedding $\varphi : \sigma \to F$. Two flags are isomorphic if they have the same type and there exists an isomorphism between them which preserves the type as well as the labeling of the type's vertices. Occasionally, we will view an oriented graph as a flag over the empty type. 

For flags $F_1, F_2$ over $\sigma$, define  $p(F_1, F_2)$ as follows. 
Choose uniformly at random a set $L$ of $|F_1| - |\sigma|$ vertices in $F_2 \setminus S_2$, where $S_2$ is the image of $\sigma$'s vertex set in $F_2$. 
Consider the flag $F$ induced by $F_2$ on $S_2\cup L$ and accompany it with $F_2$'s embedding of $\sigma$. 
Now, $p(F_1, F_2)$ is defined to be the probability that $F$ is isomorphic to $F_1$. Observe that $p(F_1, F_2)=0$ when $|F_1| > |F_2|$. 
For convenience, we also define $p(F_1, F_2)$ to be zero if $F_1, F_2$ are flags over different types. 
A {\em $\sigma$-rooting} of an oriented graph $G$ (also called a rooting of $G$ over $\sigma$) is a flag over $\sigma$ whose underlying oriented graph (i.e., just the oriented graph, without the embedding of $\sigma$) is $G$. 
For an oriented graph $G$ and a flag $F$ over $\sigma$, define $p(F,G)$ to be the mean of $p(F,\tilde{F})$ where $\tilde{F}$ is chosen uniformly at random from the set of $\sigma$-rootings of $G$. If there is no embedding of $\sigma$ into $G$, we define $p(F,G)$ to be zero.

Let $F_1,F_2$ be flags over $\sigma$ and let $G=(V,E)$ be an oriented graph such that there is an embedding of  $\sigma$ into $G$. 
We define $p(F_1,F_2;G)$ as follows. Choose uniformly at random a rooting of $G$ over $\sigma$, and denote by $S$ the image of $\sigma$'s vertex set in $G$. 
Now, choose uniformly at random two disjoint sets of vertices $L_1, L_2 \subseteq V \setminus S$ such that $|L_i| = |F_i| - |\sigma|$ for $i \in \{1,2\}$. Finally, define $p(F_1,F_2;G)$ to be the probability that the induced flag on $L_i\cup S$ is isomorphic to $F_i$ for $i \in \{1,2\}$.  We also define $\tilde{p}(F_1,F_2;G)$ in a similar way, where the sets $L_1$ and $L_2$ are chosen, uniformly and independently, at random (we still require $L_1, L_2 \subseteq V \setminus S$ but allow $L_1 \cap L_2 \neq \emptyset$). For convenience, in all cases where this process is ill-defined (namely, if $F_1$ and $F_2$ have different types, or if $\sigma$ does not embed into $G$, or if there is no such pair of disjoint sets $L_1, L_2$), we define $p(F_1, F_2; G)$ to be zero.

As an example, consider $\sigma$, $F_1, F_2$, and $G$ in Figure~\ref{fig:example}. There are $6$ $\sigma$-rootings of $G$ (one per edge), one of which is shown in Figure~\ref{fig:rooting_example}, along with the three flags of order $3$ over $\sigma$ which appear with positive probability in that rooting. A straightforward calculation shows that $p(F_1,G) = 1/9$ and $p(F_2,G) = 1/6$. Similarly, $p(F_1, F_2; G) = \tilde{p}(F_1, F_2; G) = p(F_1, F_1; G) = p(F_2, F_2; G) = 0$, $\tilde{p}(F_1, F_1; G) = 1/27$ and $\tilde{p}(F_2, F_2; G) = 1/18$. 
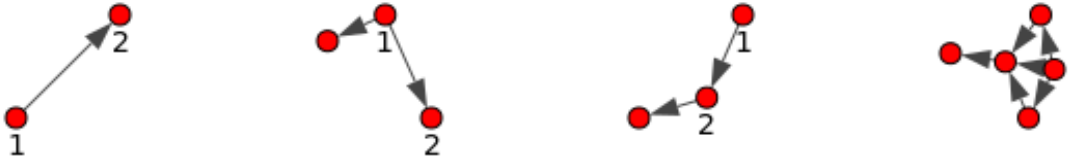
\begin{figure}
\centering
\begin{tikzpicture}
\Vertex[x=-5,  y=-0.7, label=$1$, size=0.4]{A1};
\Vertex[x=-5, y=0.7, label= $2$, size=0.4]{B1};
\Edge[Direct](A1)(B1); 

\Vertex[x=-1, y=-0.7, label=$1$, size=0.4]{A2};
\Vertex[x=-1, y=0.7, label= $2$, size=0.4]{B2};
\Vertex[x=-2.2, y=0, size=0.4]{C2};
\Edge[Direct](A2)(B2);
\Edge[Direct](A2)(C2);

\Vertex[x=2.2, y=-0.7, label=$1$, size=0.4]{A3};
\Vertex[x=2.2, y=0.7, label= $2$, size=0.4]{B3};
\Vertex[x=1,y=0, size=0.4]{C3};
\Edge[Direct](A3)(B3);
\Edge[Direct](B3)(C3);

\Vertex[x=6.2, y=1.4, size=0.4]{D1};
\Vertex[x=6.2, y=0, size=0.4]{D2};
\Vertex[x=5, y=-0.7, size=0.4]{D3};
\Vertex[x=5, y=0.7, size=0.4]{D4};
\Vertex[x=6.2, y=-1.4, size=0.4]{D5};
\Edge[Direct](D2)(D1);
\Edge[Direct](D4)(D2);
\Edge[Direct](D5)(D2);
\Edge[Direct](D3)(D2);
\Edge[Direct](D3)(D4);
\Edge[Direct](D3)(D5);

\end{tikzpicture}
\caption{An example of (left to right) a type $\sigma$, two flags $F_1, F_2$ over $\sigma$ and an oriented graph $G$.}
\label{fig:example}
\end{figure}

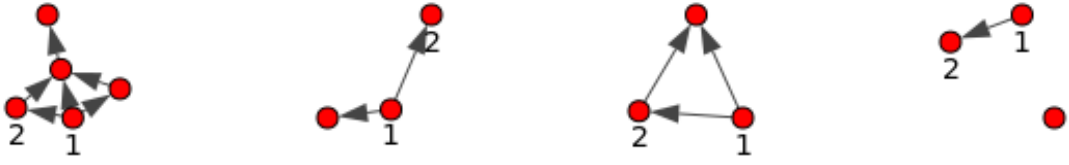
\begin{figure}
\centering
\begin{tikzpicture}
\Vertex[x=-5, y=1.4, size=0.4]{D1};
\Vertex[x=-5,y=0, size=0.4]{D2};
\Vertex[x=-6.2,y=-0.7, label=$1$, size=0.4]{D3};
\Vertex[x=-6.2,y=0.7, label=$2$, size=0.4]{D4};
\Vertex[x=-5,y=-1.4, size=0.4]{D5};
\Edge[Direct](D2)(D1);
\Edge[Direct](D4)(D2);
\Edge[Direct](D5)(D2);
\Edge[Direct](D3)(D2);
\Edge[Direct](D3)(D4);
\Edge[Direct](D3)(D5);

\Vertex[x=-2.2, y=-0.7, label=$1$, size=0.4]{C3};
\Vertex[x=-2.2, y=0.7, label=$2$, size=0.4]{C4};
\Vertex[x=-1, y=-1.4, size=0.4]{C5};
\Edge[Direct](C3)(C4);
\Edge[Direct](C3)(C5);

\Vertex[x=3,y=0, size=0.4]{B2};
\Vertex[x=1.8,y=-0.7, label=$1$, size=0.4]{B3};
\Vertex[x=1.8,y=0.7, label=$2$, size=0.4]{B4};
\Edge[Direct](B4)(B2);
\Edge[Direct](B3)(B2);
\Edge[Direct](B3)(B4);

\Vertex[x=7, y=1.4, size=0.4]{A1};
\Vertex[x=5.8, y=-0.7, label=$1$, size=0.4]{A3};
\Vertex[x=5.8, y=0.7, label=$2$, size=0.4]{A4};
\Edge[Direct](A3)(A4);

\end{tikzpicture}
\caption{An example of a rooting of $G$ over $\sigma$, and the three flags over $\sigma$ with one petal which appear with positive probability in that rooting.}
\label{fig:rooting_example}
\end{figure}

\subsection{The flag probability matrix}
Let $\Sigma$ be a finite set of types and let ${\mathcal F}$ be a finite set of flags over types in $\Sigma$. 
For an oriented graph $G$ we define the matrix $A_G$ (which depends on $\Sigma$ and ${\mathcal F}$ as well) as follows. It is an $|{\mathcal F}|\times|{\mathcal F}|$ matrix whose $(F_1, F_2)$ entry is $p(F_1, F_2; G)$. It readily follows from the definition of $p(F_1,F_2;G)$ that the entries of $A_G$ are rational numbers and that it is a block-diagonal matrix, with one block corresponding to each type $\sigma\in \Sigma$. Similarly, we define the $|{\mathcal F}|\times|{\mathcal F}|$ matrix $\tilde{A}_G$ whose $(F_1, F_2)$ entry is $\tilde{p}(F_1,F_2;G)$.

The following simple example, that we also use later, demonstrates how to compute the matrices $A_G$ and $\tilde{A}_G$. Consider the $1$-vertex type, and the three different flags with one petal over it (see Figure~\ref{fig:flags_vertex}). 
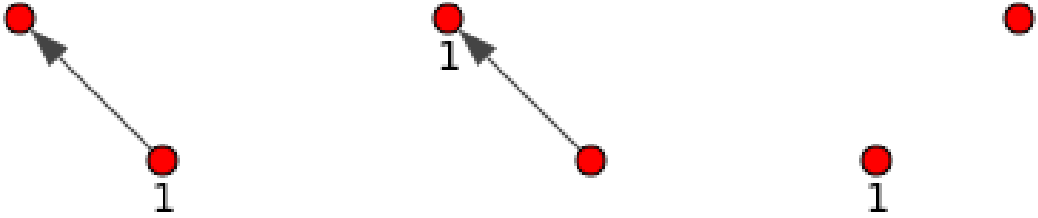
\begin{figure}
\centering
\begin{tikzpicture}
\Vertex[x=-4, y=-0.7, label=$1$, size=0.4]{A1};
\Vertex[x=-4, y=0.7, size=0.4]{B1};
\Edge[Direct](A1)(B1);
\Vertex[x=0, y=-0.7, label=$1$, size=0.4]{A2};
\Vertex[x=0, y=0.7, size=0.4]{B2};
\Edge[Direct](B2)(A2);
\Vertex[x=4, y=-0.71, label=$1$, size=0.4]{A3};
\Vertex[x=4, y=0.7, size=0.4]{B3};
\end{tikzpicture}
\caption{The three $2$-vertex flags over the $1$-vertex type.}
\label{fig:flags_vertex}
\end{figure}
With respect to this family of flags, the flag probability matrix $A_G$ is a symmetric $3 \times 3$ matrix which is defined by six numbers as follow. Sample uniformly at random a vertex from a large oriented graph (in our terminology, a random {\em $\sigma$-rooting}, where $\sigma$ is the $1$-vertex type), and calculate the expectations of the probabilities of two random distinct vertices having any particular ``relationship'' with the chosen vertex. For example, the contribution of any root vertex to the $(3,3)$ entry of $A_G$, is the probability that the two randomly chosen vertices are both non-neighbors of that root vertex. Clearly, these probabilities can be expressed as averages of quadratic terms in the degrees of a vertex, namely, its out-degree $d_{+}$, its in-degree $d_{-}$, and its non-degree $d_0$. Therefore
\begin{equation}\label{eq:3matrix}
A_G=\frac{1}{n(n-1)(n-2)}\begin{pmatrix}
2\sum_v \binom{d_{+}(v)}{2} & \sum_v d_{+}(v)d_{-}(v) & \sum_v d_{+}(v)d_{0}(v) \\
\sum_v d_{+}(v)d_{-}(v) & 2\sum_v \binom{d_{-}(v)}{2} & \sum_v d_{-}(v)d_{0}(v) \\ 
\sum_v d_{+}(v)d_{0}(v) & \sum_v d_{-}(v)d_{0}(v) & 2\sum_v \binom{d_{0}(v)}{2} 
\end{pmatrix}. 
\end{equation}
Similarly,
\begin{align*} 
\tilde{A}_G &= \frac{1}{n(n-1)^2}\begin{pmatrix}
\sum_v d_{+}(v)^2 & \sum_v d_{+}(v)d_{-}(v) & \sum_v d_{+}(v)d_{0}(v) \\
\sum_v d_{+}(v)d_{-}(v) & \sum_v d_{-}(v)^2 & \sum_v d_{-}(v)d_{0}(v) \\ 
\sum_v d_{+}(v)d_{0}(v) & \sum_v d_{-}(v)d_{0}(v) & \sum_v d_{0}(v)^2  
\end{pmatrix}\\ 
&=\frac{1}{n(n-1)^2}\sum_v
\begin{pmatrix} d_{+}(v) \\ d_{-}(v) \\ d_{0}(v) \end{pmatrix}
(d_{+}(v),d_{-}(v),d_{0}(v)).
\end{align*}
This matrix is clearly PSD, since it has the form $B^{\rm T} B$.

Our real interest is in $A_G$, whereas $\tilde{A}_G$ is merely a supporting actor, as the following lemma illustrates.
\begin{lemma} \label{lem::almostPSD}
Let $\Sigma$ be a set of types, let ${\mathcal F}$ be a set of flags over types from $\Sigma$, and let $G=(V,E)$ be an $n$-vertex oriented graph. Then
\begin{enumerate}
\item 
For a type $\sigma$ in $\Sigma$, let ${\mathcal R}_{\sigma}$ be the set of all rootings of $G$ over $\sigma$, let ${\mathcal F}_{\sigma}$ be the set of flags in ${\mathcal F}$ over the type $\sigma$ and let $B_{\sigma}$ be the $|{\mathcal F}_{\sigma}|\times|{\mathcal R}_{\sigma}|$ matrix such that for every rooting $r\in {\mathcal R}_{\sigma}$ and every flag $F$ in ${\mathcal F}_{\sigma}$, the $(F, r)$ entry of $B_{\sigma}$ is $p(F,r)$. 

For every $\sigma$ in $\Sigma$, the block of $\tilde{A}_G$ which corresponds to $\sigma$ equals $\frac{1}{|{\mathcal R}_{\sigma}|} B_{\sigma}  B_{\sigma}^{\rm T}$. Consequently, the matrix $\tilde{A}_G$ is PSD.

\item 
For $F_1, F_2 \in \mathcal{F}$, if $F_1, F_2$ are flags over the same $t$-vertex type with $t<n$, then
\begin{equation*}
|p(F_1, F_2; G) - \tilde{p}(F_1, F_2; G)| \leq \frac{(|F_1|-t)(|F_2|-t)}{n-t},
\end{equation*} 
and otherwise $|p(F_1, F_2; G) - \tilde{p}(F_1, F_2; G)| =0$.
Consequently,  
$$
\lVert A_G - \tilde{A}_G\rVert_{\infty} \leq \frac{C_{\mathcal F}}{n},
$$ 
where $||\cdot||_{\infty}$ is the max norm and $C_{\mathcal F}$ is a positive constant depending only on $\mathcal F$.
\end{enumerate}
\end{lemma}

\begin{proof}
\begin{enumerate}
\item  
For every two flags $F_1, F_2 \in \mathcal{F}$ over the same type $\sigma$, clearly
$$
\tilde{p}(F_1, F_2; G) = \frac{1}{|{\mathcal R}_{\sigma}|} \sum_{r \in {\mathcal R}_{\sigma}} p(F_1,r) p(F_2,r).
$$
Our claim readily follows.
\item 
By definition, $p(F_1, F_2; G) = \tilde{p}(F_1, F_2; G) =0$ if $F_1, F_2$ are flags over different types 
or\\$\max\{|F_1|,|F_2|\}>n$.
If $F_1, F_2$ are flags over the same $t$-vertex type such that $t\geq n$ and $|F_1|,|F_2|\leq n$, then necessarily $|F_1|=|F_2|=t$ and hence
$p(F_1, F_2; G) = \tilde{p}(F_1, F_2; G)$. 

Therefore, assume that $F_1, F_2$ are flags over the same $t$-vertex type $\sigma$ with $t<n$ and $|F_1|,|F_2|\leq n$.
Choose uniformly at random a rooting of $G$ over $\sigma$ and denote the image of $\sigma$'s vertex set in $G$ by $S$. Now, choose uniformly and independently at random two sets of vertices $L_1, L_2 \subseteq V \setminus S$ such that $|L_i| = |F_i| - t$ for $i \in \{1,2\}$. Let $\Omega$ denote the event that the induced flag on $L_i \cup S$ is isomorphic to $F_i$ for $i \in \{1,2\}$.
Note that
$$
p(F_1, F_2; G) = \Pr(\Omega \mid L_1 \cap L_2 = \emptyset) 
$$
and
\begin{align*}
\tilde{p}(F_1, F_2; G)&=\Pr(\Omega)\\
&=\Pr(L_1\cap L_2=\emptyset)\Pr(\Omega\mid L_1\cap L_2=\emptyset)+\Pr(L_1\cap L_2\neq\emptyset)\Pr(\Omega\mid L_1\cap L_2\neq\emptyset). 
\end{align*}
It follows that
\begin{align*}
p(&F_1, F_2; G) - \tilde{p}(F_1, F_2; G) = \left(1 - \Pr(L_1 \cap L_2 = \emptyset)\right) \Pr(\Omega \mid L_1 \cap L_2 = \emptyset)\\
&- \Pr(L_1 \cap L_2 \neq \emptyset) \Pr(\Omega \mid L_1 \cap L_2 \neq \emptyset)\\
=& \Pr(L_1 \cap L_2 \neq \emptyset) \Pr(\Omega \mid L_1 \cap L_2 = \emptyset) - \Pr(L_1 \cap L_2 \neq \emptyset) \Pr(\Omega \mid L_1 \cap L_2 \neq \emptyset)\\
=& \Pr(L_1 \cap L_2 \neq \emptyset) \left(\Pr(\Omega \mid L_1 \cap L_2 = \emptyset) - \Pr(\Omega \mid L_1 \cap L_2 \neq \emptyset)\right).
\end{align*}
Hence
\begin{align*}
|p(F_1, F_2; G)&-\tilde{p}(F_1, F_2; G)|\\
&=\Pr(L_1\cap L_2\neq\emptyset)\left\lvert\Pr(\Omega\mid L_1\cap L_2=\emptyset)-\Pr(\Omega\mid L_1\cap L_2\neq\emptyset)\right\rvert\\
&\leq  \Pr(L_1\cap L_2\neq\emptyset).
\end{align*}
For every $v \in V$ it holds that
\begin{align*}
\Pr(v \in L_1 \text{ and } v \in L_2)&=\Pr(v\notin S)\Pr(v \in L_1 \text{ and } v \in L_2\mid v\notin S)\\
&=\frac{n-t}{n} \cdot \frac{|F_1| - t}{n-t} \cdot \frac{|F_2| - t}{n-t}.
\end{align*}
A union bound then implies that
\begin{align*}
|p(F_1, F_2; G) - \tilde{p}(F_1, F_2; G)| &\leq \Pr(L_1 \cap L_2 \neq \emptyset)\\
&\leq n\cdot \frac{n-t}{n} \cdot \frac{|F_1| - t}{n-t} \cdot \frac{|F_2| - t}{n-t} = \frac{(|F_1|-t)(|F_2|-t)}{n-t}.
\qedhere\end{align*}
\end{enumerate}
\end{proof}

Note that Lemma~\ref{lem::almostPSD} appears implicitly in~\cite{razborov} and is proved in~\cite{HLNPS}.  

\section{Applying flags to prove graph inequalities}\label{sec:framework}
In this section we explain how to obtain lower bounds on the densities of fixed oriented graphs in large oriented graphs, using flags. To make the presentation simpler and more concrete, we concentrate on the problem at hand, i.e., bounding $\tau$.

\subsection{An SDP problem}\label{subsec:SDP}

Let $k \geq 3$ be an integer and let $G_1, \ldots, G_m$ be the complete list of all $k$-vertex oriented graphs, up to isomorphism.
First, we restate the quantity $t(G) + i(G)$ in terms of $k$-vertex subgraphs. 
For every $1 \leq i \leq m$, let 
$$
c_i = t(G_i) + i(G_i).
$$ 
\begin{observation}\label{obs:objective}
For every oriented graph $G$ it holds that 
$$
t(G) + i(G) = \sum_{i=1}^{m} c_i  p(G_i, G).
$$
\end{observation}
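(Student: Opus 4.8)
The plan is to prove the identity by a two–stage sampling argument, which is essentially the law of total probability applied to nested random subsets. First I would note that, directly from the definitions, $t(G)=p(T_3,G)$ and $i(G)=p(\overline{K_3},G)$, where $T_3$ denotes the transitive triangle and $\overline{K_3}$ the independent triple on $3$ vertices; likewise $t(G_i)=p(T_3,G_i)$ and $i(G_i)=p(\overline{K_3},G_i)$ for each $k$-vertex oriented graph $G_i$ (this is meaningful since $k\geq 3$). Consequently it suffices to establish, for each fixed $3$-vertex oriented graph $H$, the ``chain rule''
\[
p(H,G) = \sum_{i=1}^{m} p(H,G_i)\, p(G_i,G),
\]
and then to add the two instances $H=T_3$ and $H=\overline{K_3}$.

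To prove the chain rule I would assume $n:=|V(G)|\geq k$ (for $n<k$ both sides vanish once one adopts the convention $p(G_i,G)=0$ when $|G_i|>|G|$, and in any case the statement is only used for large $G$) and consider the experiment: choose a uniformly random $k$-subset $S\subseteq V(G)$, and then a uniformly random $3$-subset $R\subseteq S$. A symmetry computation shows that $R$ is then distributed as a uniformly random $3$-subset of $V(G)$, since each $3$-subset is contained in exactly $\binom{n-3}{k-3}$ of the $k$-subsets, so all $3$-subsets are equally likely; hence $\Pr[G[R]\cong H]=p(H,G)$. On the other hand, I would condition on the isomorphism type of $G[S]$: it equals $G_i$ with probability exactly $p(G_i,G)$, and, conditioned on $G[S]\cong G_i$, the subgraph induced on the random $3$-subset $R\subseteq S$ is isomorphic to $H$ with probability exactly $p(H,G_i)$. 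The latter is precisely the definition of $p(H,G_i)$, and it depends on $S$ only through the isomorphism type of $G[S]$ because isomorphisms preserve induced–subgraph densities. Summing over $i$ by the law of total probability yields the displayed chain rule.

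There is essentially no obstacle here; the only points requiring a line of care are the symmetry claim (that nesting a uniform $3$-subset inside a uniform $k$-subset reproduces the uniform $3$-subset distribution) and the remark that $p(H,G_i)$ is well defined on isomorphism types, together with the harmless degenerate range $3\leq n<k$. I would then conclude by adding the two instances:
\[
t(G)+i(G) = p(T_3,G)+p(\overline{K_3},G) = \sum_{i=1}^{m}\bigl(p(T_3,G_i)+p(\overline{K_3},G_i)\bigr)p(G_i,G) = \sum_{i=1}^{m} c_i\, p(G_i,G).
\]
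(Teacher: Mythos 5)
Your proposal is correct and follows essentially the same route as the paper: the paper's proof is exactly the two-stage sampling argument (sample $k$ vertices, then $3$ of those) justified by the law of total probability, which you have simply spelled out in more detail via the chain rule $p(H,G)=\sum_i p(H,G_i)p(G_i,G)$ for $H\in\{T_3,\overline{K_3}\}$.
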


\begin{proof}
The quantities $t(G), i(G)$ are defined by sampling 3 vertices of $G$ uniformly at random. Instead, we can first sample $k$ vertices of $G$ uniformly at random and then sample 3 vertices uniformly at random out of these $k$. The two resulting expressions are equal by the law of total probability.
\end{proof}

Let $\Sigma$ be a set of types and let ${\mathcal F}$ be a set of flags over $\Sigma$. For an oriented graph $G$, let $A_G$ be the flag probability matrix of $G$ with respect to the set ${\mathcal F}$ of flags over the types in $\Sigma$.

\begin{observation}\label{obs:matrix}
Let $G$ be an oriented graph and let ${\mathcal F}$ be a family of flags. If $|F_1| + |F_2| - |\sigma| \leq k$ for all flags $F_1, F_2 \in \mathcal{F}$ over the same type $\sigma$, then
$$
A_G = \sum_{i=1}^{m} p(G_i, G) A_{G_i}.
$$
\end{observation}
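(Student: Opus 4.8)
The plan is to prove the identity entrywise: fix two flags $F_1, F_2 \in \mathcal{F}$ over a common type $\sigma$ with $|\sigma| = s$, and show that the $(F_1,F_2)$ entry of $A_G$ equals the corresponding entry of $\sum_i p(G_i,G) A_{G_i}$, i.e.
\[
p(F_1,F_2;G) = \sum_{i=1}^m p(G_i,G)\, p(F_1,F_2;G_i).
\]
(For flags over different types both sides are $0$ by convention, so that case is immediate.) The key point is that the left-hand side is itself a probability of an event defined by sampling, so by the law of total probability I can condition on an intermediate sampling step, exactly as in the proof of Observation~\ref{obs:objective}.

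First I would unwind the definition of $p(F_1,F_2;G)$: it is the probability that, after choosing a uniformly random $\sigma$-rooting of $G$ with image $S$ (so $|S| = s$) and then two disjoint uniformly random sets $L_1, L_2 \subseteq V(G)\setminus S$ with $|L_i| = |F_i| - s$, the flag induced on $S \cup L_i$ is isomorphic to $F_i$ for $i \in \{1,2\}$. The hypothesis $|F_1| + |F_2| - |\sigma| \le k$ guarantees that $|S \cup L_1 \cup L_2| = s + (|F_1|-s) + (|F_2|-s) = |F_1| + |F_2| - s \le k$, so the union of the sampled vertices fits inside a $k$-subset. I would then re-describe the same random experiment as a two-stage process: first pick a uniformly random $k$-subset $W$ of $V(G)$ — which induces a copy of some $G_i$ with probability $p(G_i,G)$ — and then, conditioned on $W$, carry out the $\sigma$-rooting-and-$L_1,L_2$ sampling \emph{inside} $W$. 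The main thing to verify is that this two-stage process induces the correct joint distribution on $(S, L_1, L_2)$, namely the uniform distribution over all triples of the required sizes in $G$; this is a routine symmetry/counting check (every valid triple $(S,L_1,L_2)$ lies in the same number of $k$-subsets $W$, and within each $W$ it is sampled with the same conditional probability). Given the distributional identity, the law of total probability yields $p(F_1,F_2;G) = \sum_W \Pr[W] \cdot \Pr[F_i \text{ induced correctly} \mid W]$, and the inner conditional probability depends on $W$ only through its isomorphism type $G_i$, where it equals precisely $p(F_1,F_2;G_i)$.

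The main obstacle — though it is more bookkeeping than genuine difficulty — is handling the degenerate cases cleanly: if $\sigma$ does not embed into $G$ (or into $G_i$), or the sizes don't leave room for disjoint $L_1, L_2$, the relevant quantities are defined to be $0$, and one must check the claimed identity still holds (e.g. if $\sigma$ embeds in $G$ but not in a particular $G_i$, that $G_i$ contributes $0$ on both sides, and conversely $\sigma$ embeds in $G$ iff it embeds in some $G_i$ with $p(G_i,G)>0$). Once the sampling-equivalence is set up with the size bound $|F_1|+|F_2|-|\sigma|\le k$ in hand, the statement follows by summing the entrywise identity over all $(F_1,F_2)$, which is exactly the matrix equation $A_G = \sum_{i=1}^m p(G_i,G) A_{G_i}$.
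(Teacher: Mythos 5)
Your proposal is correct and follows exactly the paper's route: the paper proves the same entrywise identity $p(F_1,F_2;G)=\sum_{i=1}^m p(G_i,G)\,p(F_1,F_2;G_i)$ by the law of total probability and notes the matrix identity follows. You simply spell out the two-stage sampling argument and the degenerate cases in more detail than the paper does.
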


\begin{proof}
For every two flags $F_1, F_2 \in \mathcal{F}$, it follows by the law of total probability that
$$
p(F_1, F_2; G) = \sum_{i=1}^{m} p(G_i, G) p(F_1, F_2; G_i).
$$
Our claim readily follows.
\end{proof}

\begin{theorem} \label{thm2gen}
Let $\mathcal{F}$ be a family of flags satisfying the assumption of Observation~\ref{obs:matrix}. Suppose that $\sum_{i=1}^{m} p_i  c_i \geq \alpha$ for every non-negative real numbers $p_1, \ldots, p_{m}$ that sum up to $1$ for which the matrix $\sum_{i=1}^{m} p_i A_{G_i}$ is PSD. Then $\tau \geq \alpha$.
\end{theorem}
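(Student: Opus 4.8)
The plan is a standard compactness argument, with the only genuinely delicate point being that the matrices $A_G$ themselves need not be PSD, so one must pass through the auxiliary matrices $\tilde{A}_G$ of Lemma~\ref{lem::almostPSD}.

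First I would fix, for each integer $n \geq k$, an $n$-vertex oriented graph $G^{(n)}$ attaining the minimum, i.e.\ with $t(G^{(n)}) + i(G^{(n)}) = \tau(n)$. The profile vectors
$\mathbf{p}^{(n)} := \big(p(G_1, G^{(n)}), \ldots, p(G_m, G^{(n)})\big)$
all lie in the compact simplex $\Delta := \{x \in \R^m : x_i \geq 0,\ \sum_{i=1}^m x_i = 1\}$ (they are nonnegative and sum to $1$ by the law of total probability), so along some subsequence $n_1 < n_2 < \cdots$ they converge to a point $\mathbf{p} = (p_1, \ldots, p_m) \in \Delta$. Restricting attention to this subsequence from now on, Observation~\ref{obs:objective} gives $\tau(n) = t(G^{(n)}) + i(G^{(n)}) = \sum_{i=1}^m c_i\, p(G_i, G^{(n)}) \to \sum_{i=1}^m c_i p_i$, while $\tau(n) \to \tau$ by definition of $\tau$; hence $\tau = \sum_{i=1}^m c_i p_i$.

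Next I would show that $M := \sum_{i=1}^m p_i A_{G_i}$ is PSD. By Observation~\ref{obs:matrix} (whose hypothesis is exactly the assumption placed on $\mathcal{F}$), $A_{G^{(n)}} = \sum_{i=1}^m p(G_i, G^{(n)}) A_{G_i}$, and since this is a finite linear combination of fixed matrices with coefficients converging to $p_i$, we get $A_{G^{(n)}} \to M$ entrywise. Now invoke Lemma~\ref{lem::almostPSD}: part~(1) says each $\tilde{A}_{G^{(n)}}$ is PSD, and part~(2) says $\lVert A_{G^{(n)}} - \tilde{A}_{G^{(n)}} \rVert_\infty = O(1/n_j) \to 0$, so $\tilde{A}_{G^{(n)}} \to M$ as well. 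Since the cone of PSD matrices is closed, $M$ is PSD.

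Finally, $\mathbf{p}$ is a nonnegative vector summing to $1$ for which $\sum_{i=1}^m p_i A_{G_i}$ is PSD, so the hypothesis of the theorem yields $\sum_{i=1}^m p_i c_i \geq \alpha$; combined with $\tau = \sum_{i=1}^m p_i c_i$ from the second paragraph, this gives $\tau \geq \alpha$. The one step to handle with care is the passage to the limit in the PSD condition: one cannot simply say ``$A_{G^{(n)}}$ is PSD, hence so is $M$'', because $A_{G^{(n)}}$ generally fails to be PSD; the role of $\tilde{A}_{G^{(n)}}$ and of the $O(1/n)$ estimate in Lemma~\ref{lem::almostPSD} is precisely to repair this.
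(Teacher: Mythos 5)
Your proposal is correct and follows essentially the same route as the paper: extract a convergent subsequence of profile vectors of minimizers, identify $\tau$ with $\sum_i c_i p_i$ via Observation~\ref{obs:objective}, and establish positive semidefiniteness of $\sum_i p_i A_{G_i}$ by passing through $\tilde{A}_{G^{(n)}}$ using both parts of Lemma~\ref{lem::almostPSD}. The only (immaterial) difference is that the paper verifies the limiting PSD condition by bounding the quadratic form $v^{\rm T} A_{G^{(n)}} v$ directly, whereas you argue that $\tilde{A}_{G^{(n)}}$ converges to the same limit and invoke closedness of the PSD cone.
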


\begin{proof}
Recall that for every positive integer $n$,
$$
\tau(n) = \min \{t(G) + i(G) : G \textrm{ is an oriented graph on } n \textrm{ vertices}\}
$$
Let $G^{(n)}$ be an oriented graph on $n$ vertices for which $t(G^{(n)}) + i(G^{(n)}) = \tau(n)$. By passing to a subsequence of $(G^{(n)})_{n=1}^{\infty}$ if needed, we may assume that for every $1 \leq i \leq m$, the sequence $(p(G_i,G^{(n)}))_{n=1}^{\infty}$ converges to a limit which we denote by $p_i$. Clearly, the real numbers $p_1, \ldots, p_{m}$ are non-negative and $\sum_{i=1}^{m} p_i = 1$. We will show that the matrix $\sum_{i=1}^{m} p_i A_{G_i}$ is PSD. 

Fix a vector $v \in \mathbb{R}^{|{\mathcal F}|}$ and a positive integer $n$. Then
\begin{align*}
v^{\rm T} \left(\sum_{i=1}^{m} p(G_i, G^{(n)}) A_{G_i}\right) v &= v^{\rm T} A_{G^{(n)}} v = v^{\rm T} \left(A_{G^{(n)}} - \tilde{A}_{G^{(n)}} \right) v + v^{\rm T} \tilde{A}_{G^{(n)}} v\\
&\geq v^{\rm T} \left(A_{G^{(n)}} - \tilde{A}_{G^{(n)}}\right) v \geq - ||v||_1^2 \cdot ||A_{G^{(n)}} - \tilde{A}_{G^{(n)}}||_{\infty} \\ &= - ||v||_1^2 \cdot O(1/n).
\end{align*}
The first equality holds by Observation~\ref{obs:matrix}. The first inequality holds since $\tilde{A}_{G^{(n)}}$ is PSD, see Lemma~\ref{lem::almostPSD}, part 1. For the last equality we use Lemma~\ref{lem::almostPSD} part 2. It thus follows that
$$
v^{\rm T} \left(\sum_{i=1}^{m} p_i A_{G_i}\right) v = \lim_{n \to \infty} v^{\rm T} \left(\sum_{i=1}^{m} p(G_i, G^{(n)}) A_{G_i}\right) v \geq 0,
$$
and thus the matrix $\sum_{i=1}^{m} p_i A_{G_i}$ is indeed PSD, as claimed.

We conclude that
\begin{equation*}
\tau = \lim_{n \to \infty} \tau(n) = \lim_{n \to \infty} t(G^{(n)}) + i(G^{(n)}) = \lim_{n \to \infty} \sum_{i=1}^{m} c_i p(G_i, G^{(n)}) = \sum_{i=1}^{m} c_i p_i \geq \alpha,
\end{equation*}
where the third equality holds by Observation~\ref{obs:objective}, and the inequality holds by the assumption of the theorem and the proven fact that $\sum_{i=1}^{m} p_i A_{G_i}$ is PSD.
\end{proof}
 
In other words, Theorem \ref{thm2gen} shows that $\tau$ is bounded from below by the optimum of the following semidefinite program.
\begin{eqnarray}\label{sdpgen} 
\begin{aligned}
&\text{Variables: } p_1, \ldots, p_{m} \\
&\text{Goal: minimize } \sum_{i=1}^{m} p_i c_i \\
&\text{Constraints: } \\
& p_1, \ldots, p_{m} \ge 0 \\
& \sum_{i=1}^{m} p_i = 1 \\
& \sum_{i=1}^{m} p_i A_{G_i} \succeq 0 \text{ (this inequality means that the matrix is PSD.)}
\end{aligned} 
\end{eqnarray}

This is a key idea of the flag algebra method. An asymptotic inequality about graph densities can be proved by solving an SDP problem that seems hardly related to graphs.

How should one choose
the set of types $\Sigma$ and the set of flags ${\mathcal F}$? For any fixed $k$, there are finitely many types and finitely many flags over them that induce non-zero blocks in the matrices $A_{G_i}$. We would like to use Theorem~\ref{thm2gen} and thus also Observation~\ref{obs:matrix}. Therefore, flags over a type $\sigma$ should be of size at most $\ell_{\sigma} := \lfloor (k + |\sigma|)/2 \rfloor$. Since we would like to gain as much information as possible, it makes sense to use all flags of size at most $\ell_{\sigma}$ over every type $\sigma$ of size at most $k$. However, it is in fact sufficient to use only flags of size precisely $\ell_{\sigma}$ over every type $\sigma$ such that $|\sigma| < k$ and  $|\sigma| \equiv k \bmod 2$, since they carry the same information. In hindsight, and after some trial and error, it transpires that one can actually give up some additional flags and still obtain the same results.  

Finally, we need to choose $k$. As $k$ grows, we gain more information, but the calculations become more complex. We therefore seek the smallest $k$ that yields the desired results. As expected, $k = 1,2$ yield nothing. With $k=3$ we already obtain a non-trivial lower bound, but not the desired inequality $\tau \geq 1/9$. Finally, $k=4$ delivers the goods. We still present the analysis for $k=3$ in Section~\ref{sec:digression}, since we think that it is insightful. 

\subsection {Certificate matrices}\label{subsec:certificate}
The inner product of two $N\times N$ real matrices $A = (a_{i,j})$ and $B = (b_{i,j})$ is defined as usual to be
$$
\langle A, B \rangle := {\rm Tr}(A B^{\rm T}) = \sum_{1 \leq i \leq N,\, 1 \leq j \leq N} a_{i,j} b_{i,j}.
$$
For a symmetric $B$ clearly, $\langle A, B \rangle = {\rm Tr}(A B)$. We recall a standard fact from linear algebra.
\begin{lemma}\label{lem:scalar_product}
A matrix is PSD if and only if its inner product with every PSD matrix is non-negative.
\end{lemma}

\begin{definition}
For $\alpha \geq 0$, an $\alpha${\em-certificate} for the  SDP~\eqref{sdpgen} is an $|{\mathcal F}| \times |{\mathcal F}|$ PSD matrix $Q$ such that for every $1 \leq i \leq m$ there holds
\begin{equation*} 
c_i \geq \langle Q, A_{G_i} \rangle +  \alpha.
\end{equation*}
\end{definition}

Applying SDP weak duality to~\eqref{sdpgen} yields the following useful proposition. For the sake of completeness, we include its (short and simple) proof.  
\begin{proposition}\label{prop:certificate}
If SDP~\eqref{sdpgen} has an $\alpha$-certificate, then its optimum is at least $\alpha$, whence $\tau \geq \alpha$ by Theorem~\ref{thm2gen}.
\end{proposition}

\begin{proof}
Let $Q$ be an $\alpha$-certificate for~\eqref{sdpgen}. 
Suppose that the matrix $\sum_{i=1}^{m} p_i A_{G_i}$ is PSD, 
where $p_1, \ldots, p_{m}\ge 0$ and $\sum_{i=1}^{m} p_i = 1$. Then
$$
\sum_{i=1}^{m} p_i c_i  =  \sum_{i=1}^{m} p_i (c_i - \langle Q, A_{G_i} \rangle) + \left\langle Q, \sum_{i=1}^{m} p_i A_{G_i} \right\rangle 
\geq \sum_{i=1}^{m} p_i (c_i - \langle Q, A_{G_i} \rangle) \geq \sum_{i=1}^m p_i \alpha = \alpha,
$$
where the first inequality follows from Lemma~\ref{lem:scalar_product} and the second inequality holds since $Q$ is an $\alpha$-certificate and $p_1,\ldots,p_m \geq 0$.
\end{proof}

\section{A slight digression} \label{sec:digression}
We start with two proofs of Goodman's bound for undirected graphs, which we then adjust
to derive the (suboptimal) bound $\tau\ge 1/10$.

\subsection{A toy example -- Goodman's bound for undirected graphs}
In this subsection we deal with undirected graphs, not with oriented ones. We do not detail the slight necessary terminological changes.

For $1 \leq i \leq 4$, let $U_i$ be the unique (up to isomorphism) undirected graph with $3$ vertices and $i-1$ edges (see Figure~\ref{fig:undirected_graphs}). We denote by $\Delta = p(U_4,G)$ (resp., $\bar\Delta = p(U_1,G)$) the density of triangles (resp., independent triples) in an undirected graph $G$.

\begin{figure}
\centering
\begin{tikzpicture}

\Vertex[x=-7,y=0, size=0.4]{A1};
\Vertex[x=-5,y=0, size=0.4]{A2};
\Vertex[x=-6,y=1.7,  size=0.4]{A3};
\Text[x=-6,y=0.6]{$U_1$}

\Vertex[x=-3,y=0, size=0.4]{B1};
\Vertex[x=-1,y=0, size=0.4]{B2};
\Vertex[x=-2,y=1.7,  size=0.4]{B3};
\Edge(B1)(B3);
\Text[x=-2,y=0.6]{$U_2$}

\Vertex[x=1,y=0, size=0.4]{C1};
\Vertex[x=3,y=0, size=0.4]{C2};
\Vertex[x=2,y=1.7,  size=0.4]{C3};
\Edge(C1)(C3);
\Edge(C2)(C3);
\Text[x=2,y=0.6]{$U_3$}

\Vertex[x=5,y=0, size=0.4]{D1};
\Vertex[x=7,y=0, size=0.4]{D2};
\Vertex[x=6,y=1.7,  size=0.4]{D3};
\Edge(D1)(D2);
\Edge(D1)(D3);
\Edge(D2)(D3);
\Text[x=6,y=0.6]{$U_4$}

\end{tikzpicture}
\caption{The four $3$-vertex undirected graphs.}
\label{fig:undirected_graphs}
\end{figure}
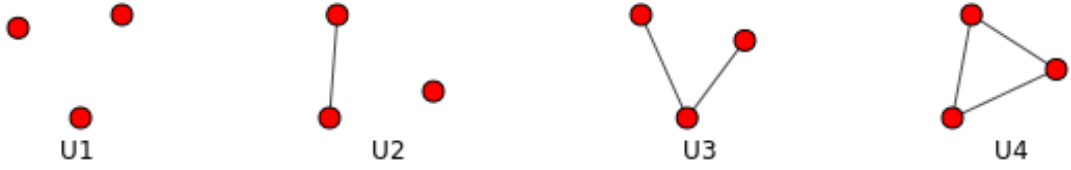

We recall two of the many proofs of Goodman's Theorem~\cite{goodman}.
\begin{theorem}[Goodman]\label{thm:Goodman}
For every $n$-vertex undirected graph $G=(V,E)$ we have
$$
\Delta + \bar\Delta \geq \textstyle\frac{1}{4} - o_n(1).
$$
\end{theorem}
\begin{proof}[First proof]
Let $m=|E|$ denote the number of edges of $G$. Observe that
\begin{equation} \label{eq:delta0}
\frac{1}{\binom{n}{3}} \sum_{v \in V} d(v) (n - 1 - d(v)) = 2 p(U_2,G) + 2 p(U_3,G) 
\end{equation}
and therefore
\begin{eqnarray} \label{eq:delta1}
\Delta + \bar\Delta &=& p(U_1, G) + p(U_4, G) = 1 - p(U_2,G) - p(U_3,G) \nonumber\\
&=& 1 - \frac{n-1}{2\binom{n}{3}} \sum_{v \in V} d(v) + \frac{1}{2\binom{n}{3}} \sum_{v \in V} d(v)^2 \nonumber \\ 
&=& 1 - \frac{6 m}{n(n-2)} + \frac{1}{2\binom{n}{3}}\sum_{v \in V} d(v)^2.
\end{eqnarray}
It follows by the Cauchy-Schwarz inequality that
\begin{equation} \label{eq::convexity}
\frac{1}{n} \sum_{v \in V} d(v)^2 \geq \left(\frac{1}{n} \sum_{v \in V} d(v)\right)^2 = \frac{4m^2}{n^2}.
\end{equation}
Combining~\eqref{eq:delta1} and~\eqref{eq::convexity} we obtain
\begin{align*}
\Delta+\bar\Delta &\geq 1 - \frac{6 m}{n(n-2)} + \frac{2m^2}{n\binom{n}{3}} = 1 - \frac{6m}{n^2} + \frac{12m^2}{n^4} - O\left(\frac{1}{n}\right)\\
&= \frac{1}{4} + \frac{3(n^2 - 4m)^2}{4n^4} - O\left(\frac{1}{n}\right) \geq \frac{1}{4} - O\left(\frac{1}{n}\right).
\qedhere
\end{align*}
\end{proof}

\begin{proof}[Second proof]

We apply the framework from Section~\ref{sec:framework} to undirected graphs, with $k=3$. 
As was elaborated in Subsection~\ref{subsec:SDP}, we consider the two different one-petal flags over the $1$-vertex type (see Figure~\ref{fig:undirected_flags}).
\begin{figure}
\centering
\begin{tikzpicture}
\Vertex[x=-4, y=-0.7, label=$1$, size=0.4]{A1};

\Vertex[x=0, y=-0.7, label=$1$, size=0.4]{A2};
\Vertex[x=0, y=0.7, size=0.4]{B2};
\Edge(B2)(A2);

\Vertex[x=4, y=-0.7, label=$1$, size=0.4]{A3};
\Vertex[x=4, y=0.7, size=0.4]{B3};
\end{tikzpicture}
\caption{The 1-vertex type, and the two 2-vertex flags over it.}
\label{fig:undirected_flags}
\end{figure}
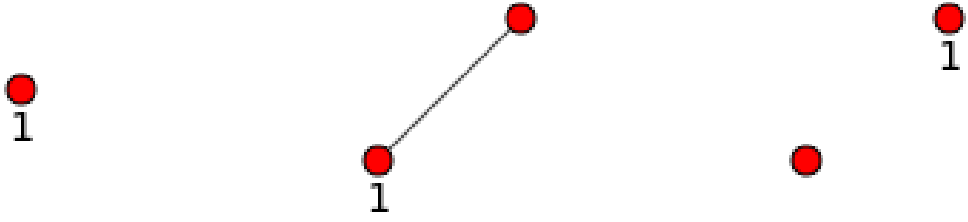
For every $1 \leq i \leq 4$, let $c_i = p(U_1, U_i) + p(U_4, U_i)$. Clearly,
$$
c_1 = 1, \quad c_2 = 0, \quad c_3 = 0, \quad c_4 = 1.
$$
For every $1 \leq i \leq 4$ let $A_i = A_{U_i}$. A straightforward calculation then shows that
$$
A_1 = \begin{pmatrix}
0 & 0 \\
0 & 1 
\end{pmatrix}, 
\quad A_2 = \begin{pmatrix}
0 & \frac{1}{3} \\
\frac{1}{3} & \frac{1}{3} 
\end{pmatrix}, 
\quad A_3=\begin{pmatrix}
\frac{1}{3} & \frac{1}{3} \\
\frac{1}{3} & 0 
\end{pmatrix}, 
\quad A_4 =  \begin{pmatrix}
1 & 0 \\
0 & 0 
\end{pmatrix}.
$$
An undirected analog of Proposition~\ref{prop:certificate} then implies that, for every $n$-vertex undirected graph $G=(V,E)$, the quantity $\Delta + \bar\Delta$ is bounded from below, up to $o_n(1)$, by the optimum of the following semidefinite program.
\begin{eqnarray}\label{sdptoy}
\begin{aligned}
&\text{Variables: } p_1, p_2, p_3, p_4 \\
&\text{Goal:  minimize }  p_1 + p_4  \\
&\text{Constraints: } \\
& p_1, p_2, p_3, p_4 \geq 0 \\
& p_1 + p_2 + p_3 + p_4 = 1 \\
&  \begin{pmatrix}
\frac{1}{3}p_3 + p_4 & \frac{1}{3}p_2 + \frac{1}{3}p_3 \\
\frac{1}{3}p_2 + \frac{1}{3}p_3 & p_1 + \frac{1}{3}p_2
\end{pmatrix} \succeq 0
\end{aligned} 
\end{eqnarray}
In order to complete this proof of Theorem~\ref{thm:Goodman} it suffices to show that the optimum of SDP~\eqref{sdptoy} is at least $\frac{1}{4}$; this can be done by finding a $\frac{1}{4}$-certificate for this SDP. In fact, the optimum of the SDP~\eqref{sdptoy} is exactly $\frac{1}{4}$; the upper bound can be proved, e.g., by taking
$$
(p_1, p_2, p_3, p_4) := \lim_{n \to \infty} (p(U_1, K_{n,n}), p(U_2, K_{n,n}), p(U_3, K_{n,n}), p(U_4, K_{n,n}))=\left(\textstyle\frac{1}{4}, 0, \textstyle\frac{3}{4}, 0\right).
$$
In order for a matrix
$$
\begin{pmatrix}
\alpha & \beta  \\
\beta  & \gamma  
\end{pmatrix}
$$
to be a $\frac{1}{4}$-certificate for SDP~\eqref{sdptoy}, it should be PSD and satisfy the following inequalities
$$
\gamma \leq \textstyle\frac{3}{4}, \quad
\textstyle \frac{2}{3} \beta + \textstyle\frac{1}{3} \gamma \leq - \textstyle \frac{1}{4}, \quad
\textstyle\frac{1}{3} \alpha + \textstyle\frac{2}{3} \beta \leq - \textstyle\frac{1}{4}, \quad
\alpha \leq \textstyle\frac{3}{4}.
$$
Choosing $\alpha = \gamma = \textstyle\frac{3}{4}$ and $\beta = - \textstyle\frac{3}{4}$ satisfies all of the above inequalities as equalities, and the resulting matrix
$$
\begin{pmatrix}
\frac{3}{4} & -\frac{3}{4}  \\
-\frac{3}{4} & \frac{3}{4}
\end{pmatrix}
$$
is indeed PSD.
\end{proof}

\begin{remark}
For every $v \in V(G)$, let $d_0(v) = n-1-d(v)$ denote the number of non-neighbours of $v$ in $G$. Note that (similarly to the derivation of \eqref{eq:3matrix}), it holds that
$$
A_G=\frac{1}{3\binom{n}{3}}\begin{pmatrix}
\sum_{v\in V} \binom{d(v)}{2} & \frac{1}{2}\sum_{v\in V}  d(v)d_0(v) \\
\frac{1}{2}\sum_{v\in V}  d_0(v)d(v) & \sum_{v\in V} \binom{d_0(v)}{2}  
\end{pmatrix}.
$$
It then follows, by stripping off the flag algebra terminology, that the second proof of Theorem~\ref{thm:Goodman} assumes the form of the following direct argument. For every $v \in V$, we have
\begin{equation} \label{eq::d0}
\binom{d_0(v)}{2} + \binom{d(v)}{2} = \frac{d_0(v)^2 + d(v)^2}{2} - \frac{n-1}{2} \geq d(v) d_0(v) - \frac{n-1}{2}.
\end{equation}
Therefore,
\begin{align*} 
(3p(U_1,G)&+p(U_2,G))+(p(U_3,G)+3p(U_4,G))=\frac{1}{\binom{n}{3}} \sum_{v \in V} \binom{d_0(v)}{2}+\frac{1}{\binom{n}{3}} \sum_{v \in V}\binom{d(v)}{2}\\
&\geq \frac{1}{\binom{n}{3}} \sum_{v \in V} d(v) d_0(v) - \frac{1}{\binom{n}{3}}n\frac{n-1}{2} = 2p(U_2,G)+2p(U_3,G)-\frac{3}{n-2},
\end{align*}
where the inequality holds by~\eqref{eq::d0} and the second equality holds by~\eqref{eq:delta0}. We conclude that
\begin{align*}
\Delta+\bar\Delta &= p(U_1,G) + p(U_4,G)\\
&\geq \frac{p(U_1,G) + p(U_2,G) + p(U_3,G) + p(U_4,G)}{4} - \frac{3}{4(n-2)} = \frac{1}{4} - O\left(\frac{1}{n}\right).
\end{align*}
\end{remark}
\subsection{Back to oriented graphs}
Recall that we want to prove that $\tau \geq \frac{1}{9}$. We now show how the two proofs of Goodman's Theorem we presented can be easily adjusted to yield a weaker, albeit nontrivial, bound.  

\begin{proposition}\label{tenth}
$$\tau\geq \textstyle\frac{1}{10},$$
i.e., every $n$-vertex oriented graph $G$ satisfies
\[
t(G)+i(G) \geq \textstyle\frac{1}{10} - o_n(1).
\]
\end{proposition}
 
\begin{proof}[First proof]
As in the first proof of Goodman's inequality, we denote the number of edges in $G$ by $m$. Also, let $c(G)$ denote the probability that a randomly chosen set of $3$ vertices of $G$ induces a cyclic triangle. It is easy to see that 
\begin{equation} \label{eq::fewCyclicTriangles}
c(G) \leq \frac{1}{3\binom{n}{3}} \sum_{v \in V} d_+(v) d_-(v) \leq \frac{1}{3\binom{n}{3}} \sum_{v \in V} \left(\frac{d_+(v) + d_-(v)}{2}\right)^2 = \frac{1}{12\binom{n}{3}}\sum_{v \in V} d(v)^2.
\end{equation}
Combining~\eqref{eq:delta1}, ~\eqref{eq::convexity} and~\eqref{eq::fewCyclicTriangles} we obtain, 
\begin{align*}
t(G) + i(G) &= \Delta+\bar\Delta-c(G)\geq 1 - \frac{6m}{n(n-2)} + \frac{5}{12\binom{n}{3}} \sum_{v \in V} d(v)^2 \\
&\geq  1 - \frac{6m}{n(n-2)} + \frac{5m^2}{3n\binom{n}{3}} = 1 - \frac{6m}{n^2} + \frac{10m^2}{n^4} - O\left(\frac{1}{n}\right) \\
&= \frac{1}{10}+\frac{(3n^2 - 10m)^2}{10n^4} -  O\left(\frac{1}{n}\right)\geq \frac{1}{10}  -  O\left(\frac{1}{n}\right).
\qedhere
\end{align*}
\end{proof}
\begin{proof}[Second proof]
As in the second proof of Goodman's Theorem, we follow the framework of Section \ref{sec:framework}, with $k=3$, but this time for oriented graphs. 

Let $D_1,D_2,D_3,D_4,D_5,D_6,D_7$ be the different oriented graphs on $3$ vertices, up to isomorphism (see Figure~\ref{fig:3graphs}).
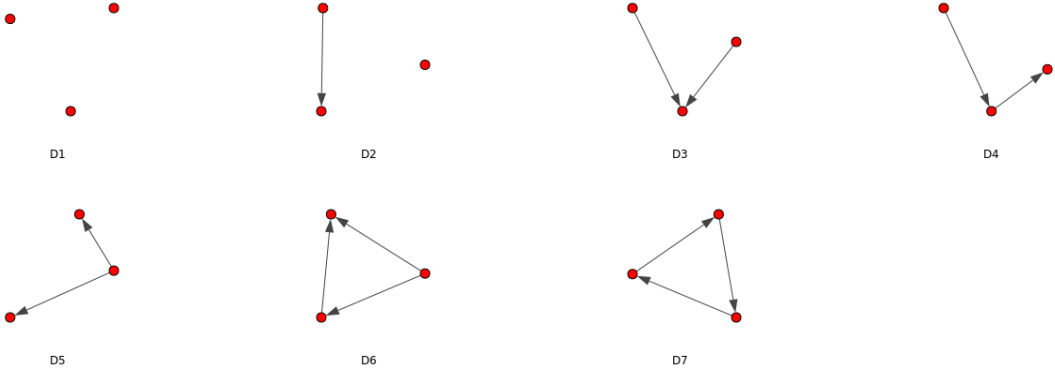
\begin{figure}
\centering
\begin{tikzpicture}

\Vertex[x=-7,y=0, size=0.4]{A1};
\Vertex[x=-5,y=0, size=0.4]{B1};
\Vertex[x=-6,y=1.7,  size=0.4]{C1};
\Text[x=-6,y=0.6]{$D_1$}

\Vertex[x=-3,y=0, size=0.4]{A2};
\Vertex[x=-1,y=0, size=0.4]{B2};
\Vertex[x=-2,y=1.7,  size=0.4]{C2};
\Edge[Direct](A2)(C2);
\Text[x=-2,y=0.6]{$D_2$}

\Vertex[x=1,y=0, size=0.4]{A3};
\Vertex[x=3,y=0, size=0.4]{B3};
\Vertex[x=2,y=1.7,  size=0.4]{C3};
\Edge[Direct](A3)(C3);
\Edge[Direct](B3)(C3);
\Text[x=2,y=0.6]{$D_3$}

\Vertex[x=5,y=0, size=0.4]{A4};
\Vertex[x=7,y=0, size=0.4]{B4};
\Vertex[x=6,y=1.7,  size=0.4]{C4};
\Edge[Direct](A4)(C4);
\Edge[Direct](B4)(A4);
\Text[x=6,y=0.6]{$D_4$}

\Vertex[x=-5,y=-3, size=0.4]{A5};
\Vertex[x=-3,y=-3, size=0.4]{B5};
\Vertex[x=-4,y=-1.3,  size=0.4]{C5};
\Edge[Direct](B5)(A5);
\Edge[Direct](B5)(C5);
\Text[x=-4,y=-2.4]{$D_5$}

\Vertex[x=-1,y=-3, size=0.4]{A6};
\Vertex[x=1,y=-3, size=0.4]{B6};
\Vertex[x=0,y=-1.3,  size=0.4]{C6};
\Edge[Direct](A6)(C6);
\Edge[Direct](B6)(C6);
\Edge[Direct](B6)(A6);
\Text[x=0,y=-2.4]{$D_6$}

\Vertex[x=3,y=-3, size=0.4]{A7};
\Vertex[x=5,y=-3, size=0.4]{B7};
\Vertex[x=4,y=-1.3,  size=0.4]{C7};
\Edge[Direct](A7)(C7);
\Edge[Direct](C7)(B7);
\Edge[Direct](B7)(A7);
\Text[x=4,y=-2.4]{$D_7$}

\end{tikzpicture}
\caption{The $7$ isomorphism types of oriented graphs of order $3$.}
\label{fig:3graphs}
\end{figure}
For every $1 \leq i \leq 7$, let $c_i = t(D_i) + i(D_i)$.  
Clearly,
$$
c_1=1,\quad c_2=0,\quad c_3=0,\quad c_4=0,\quad c_5=0,\quad c_6=1,\quad c_7=0.
$$
Each $3$-vertex type corresponds to a $1 \times 1$ block which is trivially PSD and bears no value for our purposes. Therefore, we only consider the $1$-vertex type, and the three different flags with one petal over it (see Figure~\ref{fig:flags_vertex}). Using~\eqref{eq:3matrix} we obtain
\begin{gather*}
A_1=\begin{pmatrix}
0 & 0 & 0 \\
0 & 0 & 0 \\ 
0 & 0 & 1
\end{pmatrix}, 
\quad 
A_2=\begin{pmatrix}
0 & 0 & \frac{1}{6} \\
0 & 0 & \frac{1}{6} \\ 
\frac{1}{6} & \frac{1}{6} & \frac{1}{3}
\end{pmatrix},
\quad 
A_3=\begin{pmatrix}
0 & 0 & \frac{1}{3} \\
0 & \frac{1}{3} & 0 \\ 
\frac{1}{3} & 0 & 0
\end{pmatrix},\\
A_4=\begin{pmatrix}
0 & \frac{1}{6} & \frac{1}{6} \\
\frac{1}{6} & 0 & \frac{1}{6} \\ 
\frac{1}{6} & \frac{1}{6} & 0
\end{pmatrix}, 
\quad 
A_5=\begin{pmatrix}
\frac{1}{3} & 0 & 0 \\
0 & 0 & \frac{1}{3} \\ 
0 & \frac{1}{3} & 0
\end{pmatrix},
\quad 
A_6=\begin{pmatrix}
\frac{1}{3} & \frac{1}{6} & 0 \\
\frac{1}{6} & \frac{1}{3} & 0 \\ 
0 & 0 & 0
\end{pmatrix},
\quad 
A_7=\begin{pmatrix}
0 & \frac{1}{2} & 0 \\
\frac{1}{2} & 0 & 0 \\ 
0 & 0 & 0
\end{pmatrix}.
\end{gather*}
By Theorem~\ref{thm2gen}, $\tau$ is bounded from below by the optimum of the following SDP:
\begin{eqnarray}\label{sdptoy2}
\begin{aligned}
&\text{Variables: } p_1,p_2,p_3,p_4,p_5,p_6,p_7 \\
&\text{Goal:  minimize }  p_1+p_6  \\
&\text{Constraints: } \\
& p_1,p_2,p_3,p_4,p_5,p_6,p_7\geq 0 \\
& p_1+p_2+p_3+p_4+p_5+p_6+p_7= 1 \\
& \begin{pmatrix}
\frac{1}{3}p_5+\frac{1}{3}p_6 & \frac{1}{6}p_4+\frac{1}{6}p_6+\frac{1}{2}p_7 & \frac{1}{6}p_2+\frac{1}{3}p_3+\frac{1}{6}p_4 \\
\frac{1}{6}p_4+\frac{1}{6}p_6+\frac{1}{2}p_7 & \frac{1}{3}p_3+\frac{1}{3}p_6 & \frac{1}{6}p_2+\frac{1}{6}p_4+\frac{1}{3}p_5 \\ 
\frac{1}{6}p_2+\frac{1}{3}p_3+\frac{1}{6}p_4 & \frac{1}{6}p_2+\frac{1}{6}p_4+\frac{1}{3}p_5 & p_1+\frac{1}{3}p_2
\end{pmatrix} \succeq 0
\end{aligned} 
\end{eqnarray}
If we take 
$$
(p_1,p_2,p_3,p_4,p_5,p_6,p_7):=\textstyle\frac{1}{100}(10,18,27,0,27,0, 18),
$$
then $p_1,p_2,p_3,p_4,p_5,p_6,p_7 \geq 0$, $p_1+p_2+p_3+p_4+p_5+p_6+p_7=1$, and
\begin{align*}\begin{pmatrix}
\frac{1}{3}p_5+\frac{1}{3}p_6 & \frac{1}{6}p_4+\frac{1}{6}p_6+\frac{1}{2}p_7 & \frac{1}{6}p_2+\frac{1}{3}p_3+\frac{1}{6}p_4 \\
\frac{1}{6}p_4+\frac{1}{6}p_6+\frac{1}{2}p_7 & \frac{1}{3}p_3+\frac{1}{3}p_6 & \frac{1}{6}p_2+\frac{1}{6}p_4+\frac{1}{3}p_5 \\ 
\frac{1}{6}p_2+\frac{1}{3}p_3+\frac{1}{6}p_4 & \frac{1}{6}p_2+\frac{1}{6}p_4+\frac{1}{3}p_5 & p_1+\frac{1}{3}p_2
\end{pmatrix}& =\frac{1}{100}\begin{pmatrix}
9 & 9 & 12 \\
9 & 9 & 12 \\ 
12 & 12 & 16
\end{pmatrix}\\
&=\begin{pmatrix}\frac{3}{10} \\ \frac{3}{10} \\ \frac{4}{10}\end{pmatrix} \left(\frac{3}{10},\frac{3}{10},\frac{4}{10}\right) \succeq 0.
\end{align*}
Therefore the optimum of SDP~\eqref{sdptoy2} is at most $p_1 + p_6 = \frac{1}{10}$, that is, this proof technique with $k=3$ cannot yield a lower bound larger than $\frac{1}{10}$. 
Next, we use Proposition~\ref{prop:certificate} to show that this bound is tight, by finding a $\frac{1}{10}$-certificate for SDP~\eqref{sdptoy2}.
The symmetries of the problem indicate that it might suffice (and, as the proof shows, it does suffice) to consider matrices of the form
$$Q=\begin{pmatrix}
\alpha & \beta & \gamma \\
\beta & \alpha & \gamma \\ 
\gamma & \gamma & \delta
\end{pmatrix}.$$
For this matrix to be a $\frac{1}{10}$-certificate for SDP~\eqref{sdptoy2}, it must satisfy all of the following inequalities
\begin{align}
\delta&\leq\textstyle\frac{9}{10},\label{ineq1}\\
\textstyle\frac{1}{3}\delta+\textstyle\frac{2}{3}\gamma&\leq-\textstyle\frac{1}{10},\label{ineq2}\\
\textstyle\frac{1}{3}\alpha+\textstyle\frac{2}{3}\gamma&\leq-\textstyle\frac{1}{10},\label{ineq35}\\
\textstyle\frac{1}{3}\beta+\textstyle\frac{2}{3}\gamma&\leq-\textstyle\frac{1}{10},\label{ineq4}\\
\textstyle\frac{2}{3}\alpha+\textstyle\frac{1}{3}\beta&\leq\textstyle\frac{9}{10},\label{ineq6}\\
\beta&\leq-\textstyle\frac{1}{10}.\label{ineq7}
\end{align}
In addition, $Q$ must be PSD. In particular, $|\beta| \leq \alpha$ must hold  and thus inequality~\eqref{ineq35} implies~\eqref{ineq4}. Choosing 
$$\alpha=\textstyle\frac{9}{10},\quad\beta=-\textstyle\frac{1}{10},\quad\gamma=-\textstyle\frac{6}{10},\quad\delta=\textstyle\frac{9}{10}$$
satisfies inequality~\eqref{ineq6}, whereas \eqref{ineq1}, \eqref{ineq2}, \eqref{ineq35}, \eqref{ineq7} hold as equalities. Moreover, the resulting matrix
\begin{equation}\label{eq:Qtoy2}Q=\frac{1}{10}
\begin{pmatrix}
9 & -1 & -6 \\
-1 & 9 & -6 \\ 
-6 & -6 & 9
\end{pmatrix}
\end{equation}
is PSD.
\end{proof}

\begin{remark}\label{rem:second}
As with the second proof of Theorem~\ref{thm:Goodman}, the following direct argument strips off the flag algebra terminology from the second proof of Proposition~\ref{tenth}. For simplicity, we denote $p_i = p(D_i, G)$ for every $1 \leq i \leq 7$. For every $v \in V(G)$, it holds that
\begin{align*} 
9&\binom{d_0(v)}{2} + 9\binom{d_-(v)}{2}+9\binom{d_+(v)}{2}\\ &=\frac{(3d_0(v))^2 + (2d_-(v)+2d_+(v))^2}{2} -4d_-(v)d_+(v) +5\frac{d_-(v)^2 + d_+(v)^2}{2} -\frac{9}{2}(n-1) \\
&\geq 3d_0(v)(2d_-(v)+2d_+(v))-4d_-(v)d_+(v) +5d_-(v)d_+(v)-\frac{9}{2}(n-1)\\
&=6d_0(v)d_-(v) + 6d_0(v)d_+(v) + d_-(v)d_+(v)-\frac{9}{2}(n-1).
\end{align*}
Therefore
\begin{align*} 
9(3p_1&+p_2)+9(p_3+p_6)+9(p_5+p_6) \\
&=\frac{9}{\binom{n}{3}} \sum_{v \in V} \binom{d_0(v)}{2}+\frac{9}{\binom{n}{3}} \sum_{v \in V} \binom{d_-(v)}{2}+\frac{9}{\binom{n}{3}} \sum_{v \in V} \binom{d_+(v)}{2} \\
&\geq \frac{6}{\binom{n}{3}} \sum_{v \in V} d_0(v) d_-(v) +\frac{6}{\binom{n}{3}} \sum_{v \in V} d_0(v) d_+(v) +\frac{1}{\binom{n}{3}} \sum_{v \in V} d_-(v) d_+(v) -\frac{1}{\binom{n}{3}}n\frac{9}{2}(n-1) \\
&= 6(p_2+p_4+2p_5) + 6(p_2+2p_3+p_4) + (p_4+p_6+3p_7) - \frac{27}{n-2},
\end{align*}
implying that
\begin{equation} \label{eq::p1p6}
p_1 + \textstyle\frac{2}{3} p_6 \geq  \textstyle\frac{1}{10} \left(p_1 + p_2 + p_3 + \textstyle\frac{13}{3} p_4 + p_5 + p_6 + p_7 \right) - \textstyle\frac{9}{10(n-2)}.
\end{equation}
We conclude that
\begin{align*}
t(G)+i(G) &= p_1+p_6 \geq p_1+\textstyle\frac{2}{3}p_6\geq  \textstyle\frac{1}{10}\left(p_1+p_2+p_3+\textstyle\frac{13}{3}p_4+p_5+p_6+p_7\right)-\textstyle\frac{9}{10(n-2)}\\
&\geq  \textstyle\frac{1}{10}\left(p_1+p_2+p_3+p_4+p_5+p_6+p_7\right)-\textstyle\frac{9}{10(n-2)}=\textstyle\frac{1}{10}- O\left(\textstyle\frac{1}{n}\right),
\end{align*}
where the second inequality holds by~\eqref{eq::p1p6}.
\end{remark}

The arguments used in both proofs of Proposition~\ref{tenth} can be refined to yield the following better bound.
\begin{proposition}\label{prop:refined}
Every $n$-vertex oriented graph $G$ satisfies
$$\textstyle\frac{2}{3}t(G)+i(G) \geq \textstyle\frac{1}{10} - o_n(1).$$
\end{proposition}

\begin{proof}
It is easy to verify that the matrix \eqref{eq:Qtoy2} is also a $\frac{1}{10}$-certificate for the following SDP:
\begin{eqnarray*}
\begin{aligned}
&\text{Variables: } p_1,p_2,p_3,p_4,p_5,p_6,p_7 \\
&\text{Goal:  minimize }  p_1+\textstyle\frac{2}{3}p_6  \\
&\text{Constraints: } \\
& p_1,p_2,p_3,p_4,p_5,p_6,p_7\geq 0 \\
& p_1+p_2+p_3+p_4+p_5+p_6+p_7= 1 \\
& \begin{pmatrix}
\frac{1}{3}p_5+\frac{1}{3}p_6 & \frac{1}{6}p_4+\frac{1}{6}p_6+\frac{1}{2}p_7 & \frac{1}{6}p_2+\frac{1}{3}p_3+\frac{1}{6}p_4 \\
\frac{1}{6}p_4+\frac{1}{6}p_6+\frac{1}{2}p_7 & \frac{1}{3}p_3+\frac{1}{3}p_6 & \frac{1}{6}p_2+\frac{1}{6}p_4+\frac{1}{3}p_5 \\ 
\frac{1}{6}p_2+\frac{1}{3}p_3+\frac{1}{6}p_4 & \frac{1}{6}p_2+\frac{1}{6}p_4+\frac{1}{3}p_5 & p_1+\frac{1}{3}p_2
\end{pmatrix}\succeq 0
\end{aligned} 
\end{eqnarray*}
and the result follows exactly as in the second proof of Proposition \ref{tenth} (see also Remark~\ref{rem:second}). 
Alternatively, we can prove Proposition~\ref{prop:refined} by slightly modifying the first proof of Proposition~\ref{tenth}. First, observe the following simple improvement of~\eqref{eq::fewCyclicTriangles}: 
\begin{align} \label{eq::fewCyclicTriangles+}
\frac{1}{3}t(G)+c(G) &\leq \frac{1}{3\binom{n}{3}} \sum_{v \in V} d_+(v) d_-(v) \nonumber \\
&\leq \frac{1}{3\binom{n}{3}} \sum_{v \in V} \left(\frac{d_+(v) + d_-(v)}{2}\right)^2 = \frac{1}{12\binom{n}{3}} \sum_{v \in V} d(v)^2.
\end{align}
Similarly to the first proof of Proposition~\ref{tenth}, combining~\eqref{eq::fewCyclicTriangles+} with~\eqref{eq:delta1} and~\eqref{eq::convexity} we obtain 
\begin{align*}
\frac{2}{3}t(G) + i(G) &= \Delta+\bar\Delta-\left(\frac{1}{3}t(G)+c(G)\right)\geq 1 - \frac{6m}{n(n-2)} + \frac{5}{12\binom{n}{3}} \sum_{v \in V} d(v)^2 \\
&\geq  1 - \frac{6m}{n(n-2)} + \frac{5m^2}{3n\binom{n}{3}} = 1 - \frac{6m}{n^2} + \frac{10m^2}{n^4} - O\left(\frac{1}{n}\right) \\
&= \frac{1}{10}+\frac{(3n^2 - 10m)^2}{10n^4} -  O\left(\frac{1}{n}\right)\geq \frac{1}{10}  -  O\left(\frac{1}{n}\right).
\qedhere
\end{align*}
\end{proof}
As noted in the introduction
\begin{equation}\label{eq:DHMNS+PV}
i(\tilde{G}) \geq \textstyle\frac{1}{9} - o_n(1)
\end{equation}
for every undirected $K_4$-free $n$-vertex graph $\tilde{G}$. This was proved in~\cite{DHMNS} and, independently, in~\cite{pikhurko-vaughan} using flag algebras. Combined with Proposition~\ref{prop:refined}, this yields the following slight improvement of Proposition~\ref{tenth}.
\begin{proposition}
$$
\tau > \textstyle\frac{1}{10}.
$$
\end{proposition}

\begin{proof}
Let $G_n$ be an oriented graph that attains the minimum of $t(G) + i(G)$ among all $n$-vertex oriented graphs; namely, $t(G_n) + i(G_n) = \tau(n)$.

By the graph removal lemma~\cite{ADLRY} (see also~\cite{CF} and the many references therein), there is a positive integer $n_0$ and a real number  $\delta_0 > 0$ such that every (undirected) graph $H$ on $n \geq n_0$ vertices for which $p(K_4, H) < \delta_0$, can be made $K_4$-free by deleting at most $\frac{n (n-1)}{1080}$ edges. 

Choose some $0 < \delta < \min \{\delta_0, \frac{1}{15}\}$ and suppose for a contradiction that $\tau \leq \frac{1}{10} + \frac{1}{12} \delta$. It follows by Proposition~\ref{prop:refined} that there is a positive integer $n_1$ such that for every oriented graph $G$ on $n\geq n_1$ vertices,
$$
\textstyle\frac{2}{3}t(G)+i(G)\geq\textstyle\frac{1}{10}-\textstyle\frac{1}{12}\delta.
$$
For every $n \geq \max\{n_0,n_1\}$, it holds that
\begin{align*}
\textstyle\frac{1}{3}t(G_n)&=\left(t(G_n)+i(G_n)\right)-\left(\textstyle\frac{2}{3}t(G_n)+i(G_n)\right)=\tau(n)-\left(\textstyle\frac{2}{3}t(G_n)+i(G_n)\right)\\
&\leq\tau-\left(\textstyle\frac{1}{10}-\textstyle\frac{1}{12}\delta\right)\leq\left(\textstyle\frac{1}{10}+\textstyle\frac{1}{12}\delta\right)-\left(\textstyle\frac{1}{10}-\textstyle\frac{1}{12}\delta\right)=\textstyle\frac{1}{6}\delta.
\end{align*}
Let $G_n^{(0)}$ be the underlying undirected graph of $G_n$. Since every orientation of $K_4$ contains at least two transitive triangles, it follows that
$$
p(K_4,G_n^{(0)}) \leq 2t(G_n) \leq \delta < \delta_0.
$$
Therefore, by the graph removal lemma, there is an undirected $K_4$-free graph $G_n^{(1)}$, obtained from $G_n^{(0)}$ by deleting at most $\frac{n (n-1)}{1080}$ edges. Therefore
\begin{align*}
i(G_n^{(1)})&\leq i(G_n^{(0)})+6\textstyle\frac{1}{1080}=i(G_n)+\textstyle\frac{1}{180}\leq t(G_n)+i(G_n)+\textstyle\frac{1}{180}\\
&=\tau(n)+\textstyle\frac{1}{180}\leq \tau+\frac{1}{180}\leq\textstyle\frac{1}{10}+\textstyle\frac{1}{12}\delta+\textstyle\frac{1}{180}=\textstyle\frac{1}{9}-\textstyle\frac{1}{12}\left(\textstyle\frac{1}{15}-\delta\right)
\end{align*}
contrary to~\eqref{eq:DHMNS+PV}.
We conclude that, $\tau>\frac{1}{10} + \frac{1}{12} \delta > \frac{1}{10}$.
\end{proof}

\section{Back to the main track}\label{sec:back}
Running flagmatic with $k = 3$ yields $\tau \geq 1/10$, whereas our goal is to prove that $\tau \geq 1/9$. Therefore, we try the same proof technique with $k=4$. Figure~\ref{fig:4graphs} depicts all of the different $4$-vertex oriented graphs, up to isomorphism, $G_1, \ldots, G_{42}$. 
For convenience we abbreviate $A_{G_i}$ to $A_i$.

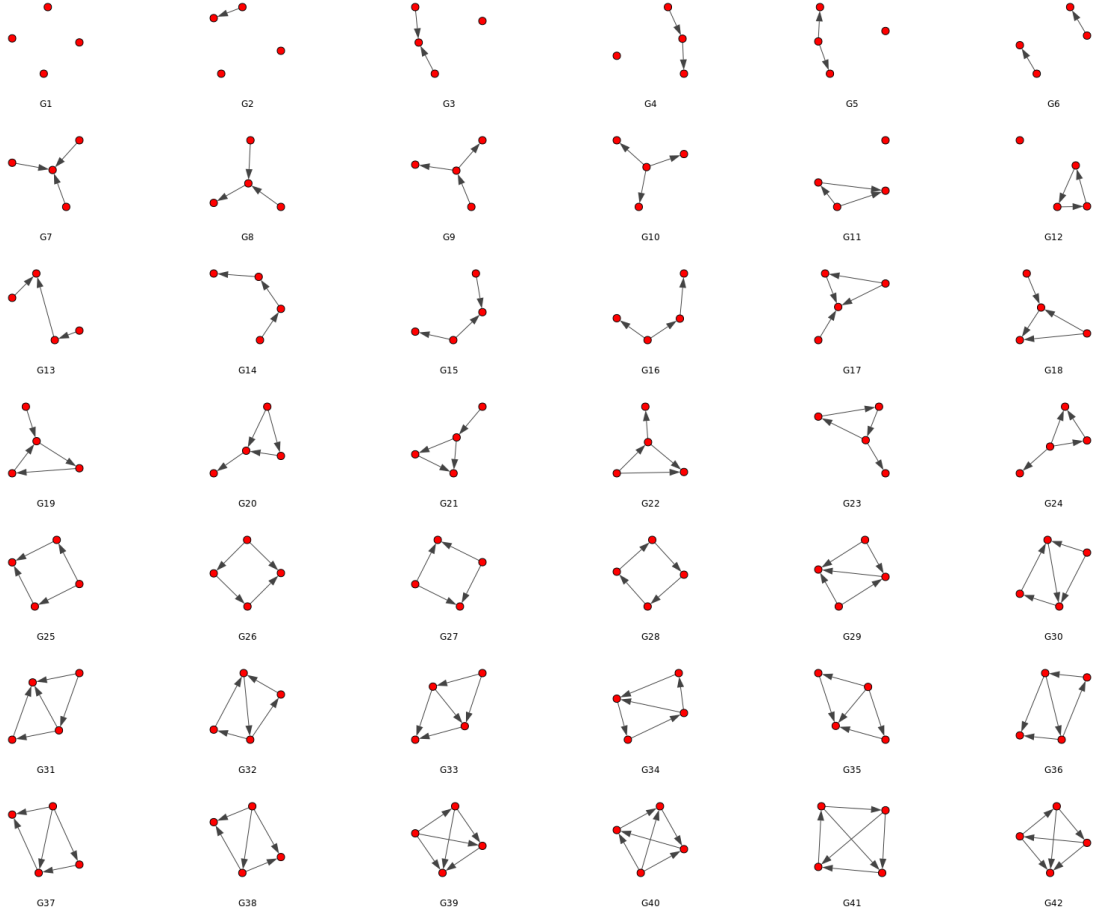
\begin{figure}
\centering
\begin{tikzpicture}

\Vertex[x=-6,y=9.6, size=0.2]{A11};
\Vertex[x=-6,y=8.9, size=0.2]{B11};
\Vertex[x=-6.6,y=8.6, size=0.2]{C11};
\Vertex[x=-5.4,y=8.6, size=0.2]{D11};
\Text[x=-6,y=8.1]{$G_{1}$}

\Vertex[x=-3.5,y=9.6, size=0.2]{A12};
\Vertex[x=-3.5,y=8.9, size=0.2]{B12};
\Vertex[x=-4.1,y=8.6, size=0.2]{C12};
\Vertex[x=-2.9,y=8.6, size=0.2]{D12};
\Text[x=-3.5,y=8.1]{$G_{2}$}
\Edge[Direct](C12)(D12);

\Vertex[x=-1,y=9.6, size=0.2]{A13};
\Vertex[x=-1,y=8.9, size=0.2]{B13};
\Vertex[x=-1.6,y=8.6, size=0.2]{C13};
\Vertex[x=-0.4,y=8.6, size=0.2]{D13};
\Text[x=-1,y=8.1]{$G_{3}$}
\Edge[Direct](A13)(D13);
\Edge[Direct](C13)(D13);

\Vertex[x=1.5,y=9.6, size=0.2]{A14};
\Vertex[x=1.5,y=8.9, size=0.2]{B14};
\Vertex[x=0.9,y=8.6, size=0.2]{C14};
\Vertex[x=2.1,y=8.6, size=0.2]{D14};
\Text[x=1.5,y=8.1]{$G_{4}$}
\Edge[Direct](C14)(A14);
\Edge[Direct](A14)(D14);

\Vertex[x=4,y=9.6, size=0.2]{A15};
\Vertex[x=4,y=8.9, size=0.2]{B15};
\Vertex[x=3.4,y=8.6, size=0.2]{C15};
\Vertex[x=4.6,y=8.6, size=0.2]{D15};
\Text[x=4,y=8.1]{$G_{5}$}
\Edge[Direct](C15)(A15);
\Edge[Direct](C15)(D15);

\Vertex[x=6.5,y=9.6, size=0.2]{A16};
\Vertex[x=6.5,y=8.9, size=0.2]{B16};
\Vertex[x=5.9,y=8.6, size=0.2]{C16};
\Vertex[x=7.1,y=8.6, size=0.2]{D16};
\Text[x=6.5,y=8.1]{$G_{6}$}
\Edge[Direct](C16)(D16);
\Edge[Direct](A16)(B16);

\Vertex[x=-6,y=6.9, size=0.2]{A21};
\Vertex[x=-6,y=6.2, size=0.2]{B21};
\Vertex[x=-6.6,y=5.9, size=0.2]{C21};
\Vertex[x=-5.4,y=5.9, size=0.2]{D21};
\Text[x=-6,y=5.4]{$G_{7}$}
\Edge[Direct](A21)(B21);
\Edge[Direct](C21)(B21);
\Edge[Direct](D21)(B21);

\Vertex[x=-3.5,y=6.9, size=0.2]{A22};
\Vertex[x=-3.5,y=6.2, size=0.2]{B22};
\Vertex[x=-4.1,y=5.9, size=0.2]{C22};
\Vertex[x=-2.9,y=5.9, size=0.2]{D22};
\Text[x=-3.5,y=5.4]{$G_{8}$}
\Edge[Direct](A22)(B22);
\Edge[Direct](C22)(B22);
\Edge[Direct](B22)(D22);

\Vertex[x=-1,y=6.9, size=0.2]{A23};
\Vertex[x=-1,y=6.2, size=0.2]{B23};
\Vertex[x=-1.6,y=5.9, size=0.2]{C23};
\Vertex[x=-0.4,y=5.9, size=0.2]{D23};
\Text[x=-1,y=5.4]{$G_{9}$}
\Edge[Direct](A23)(B23);
\Edge[Direct](B23)(C23);
\Edge[Direct](B23)(D23);

\Vertex[x=1.5,y=6.9, size=0.2]{A24};
\Vertex[x=1.5,y=6.2, size=0.2]{B24};
\Vertex[x=0.9,y=5.9, size=0.2]{C24};
\Vertex[x=2.1,y=5.9, size=0.2]{D24};
\Text[x=1.5,y=5.4]{$G_{10}$}
\Edge[Direct](B24)(A24);
\Edge[Direct](B24)(C24);
\Edge[Direct](B24)(D24);

\Vertex[x=4,y=6.9, size=0.2]{A25};
\Vertex[x=4,y=6.2, size=0.2]{B25};
\Vertex[x=3.4,y=5.9, size=0.2]{C25};
\Vertex[x=4.6,y=5.9, size=0.2]{D25};
\Text[x=4,y=5.4]{$G_{11}$}
\Edge[Direct](C25)(A25);
\Edge[Direct](A25)(D25);
\Edge[Direct](C25)(D25);

\Vertex[x=6.5,y=6.9, size=0.2]{A26};
\Vertex[x=6.5,y=6.2, size=0.2]{B26};
\Vertex[x=5.9,y=5.9, size=0.2]{C26};
\Vertex[x=7.1,y=5.9, size=0.2]{D26};
\Text[x=6.5,y=5.4]{$G_{12}$}
\Edge[Direct](C26)(A26);
\Edge[Direct](A26)(D26);
\Edge[Direct](D26)(C26);

\Vertex[x=-6,y=4.2, size=0.2]{A31};
\Vertex[x=-6,y=3.5, size=0.2]{B31};
\Vertex[x=-6.6,y=3.2, size=0.2]{C31};
\Vertex[x=-5.4,y=3.2, size=0.2]{D31};
\Text[x=-6,y=2.7]{$G_{13}$}
\Edge[Direct](A31)(D31);
\Edge[Direct](C31)(D31);
\Edge[Direct](B31)(A31);

\Vertex[x=-3.5,y=4.2, size=0.2]{A32};
\Vertex[x=-3.5,y=3.5, size=0.2]{B32};
\Vertex[x=-4.1,y=3.2, size=0.2]{C32};
\Vertex[x=-2.9,y=3.2, size=0.2]{D32};
\Text[x=-3.5,y=2.7]{$G_{14}$}
\Edge[Direct](A32)(D32);
\Edge[Direct](D32)(C32);
\Edge[Direct](B32)(A32);

\Vertex[x=-1,y=4.2, size=0.2]{A33};
\Vertex[x=-1,y=3.5, size=0.2]{B33};
\Vertex[x=-1.6,y=3.2, size=0.2]{C33};
\Vertex[x=-0.4,y=3.2, size=0.2]{D33};
\Text[x=-1,y=2.7]{$G_{15}$}
\Edge[Direct](A33)(D33);
\Edge[Direct](C33)(D33);
\Edge[Direct](A33)(B33);

\Vertex[x=1.5,y=4.2, size=0.2]{A34};
\Vertex[x=1.5,y=3.5, size=0.2]{B34};
\Vertex[x=0.9,y=3.2, size=0.2]{C34};
\Vertex[x=2.1,y=3.2, size=0.2]{D34};
\Text[x=1.5,y=2.7]{$G_{16}$}
\Edge[Direct](A34)(D34);
\Edge[Direct](D34)(C34);
\Edge[Direct](A34)(B34);

\Vertex[x=4,y=4.2, size=0.2]{A35};
\Vertex[x=4,y=3.5, size=0.2]{B35};
\Vertex[x=3.4,y=3.2, size=0.2]{C35};
\Vertex[x=4.6,y=3.2, size=0.2]{D35};
\Text[x=4,y=2.7]{$G_{17}$}
\Edge[Direct](A35)(B35);
\Edge[Direct](C35)(B35);
\Edge[Direct](D35)(B35);
\Edge[Direct](A35)(D35);

\Vertex[x=6.5,y=4.2, size=0.2]{A36};
\Vertex[x=6.5,y=3.5, size=0.2]{B36};
\Vertex[x=5.9,y=3.2, size=0.2]{C36};
\Vertex[x=7.1,y=3.2, size=0.2]{D36};
\Text[x=6.5,y=2.7]{$G_{18}$}
\Edge[Direct](A36)(B36);
\Edge[Direct](C36)(B36);
\Edge[Direct](B36)(D36);
\Edge[Direct](A36)(D36);

\Vertex[x=-6,y=1.5, size=0.2]{A41};
\Vertex[x=-6,y=0.8, size=0.2]{B41};
\Vertex[x=-6.6,y=0.5, size=0.2]{C41};
\Vertex[x=-5.4,y=0.5, size=0.2]{D41};
\Text[x=-6,y=0]{$G_{19}$}
\Edge[Direct](A41)(B41);
\Edge[Direct](C41)(B41);
\Edge[Direct](B41)(D41);
\Edge[Direct](D41)(A41);

\Vertex[x=-3.5,y=1.5, size=0.2]{A42};
\Vertex[x=-3.5,y=0.8, size=0.2]{B42};
\Vertex[x=-4.1,y=0.5, size=0.2]{C42};
\Vertex[x=-2.9,y=0.5, size=0.2]{D42};
\Text[x=-3.5,y=0]{$G_{20}$}
\Edge[Direct](A42)(B42);
\Edge[Direct](C42)(B42);
\Edge[Direct](B42)(D42);
\Edge[Direct](C42)(A42);

\Vertex[x=-1,y=1.5, size=0.2]{A43};
\Vertex[x=-1,y=0.8, size=0.2]{B43};
\Vertex[x=-1.6,y=0.5, size=0.2]{C43};
\Vertex[x=-0.4,y=0.5, size=0.2]{D43};
\Text[x=-1,y=0]{$G_{21}$}
\Edge[Direct](A43)(B43);
\Edge[Direct](B43)(C43);
\Edge[Direct](B43)(D43);
\Edge[Direct](C43)(D43);

\Vertex[x=1.5,y=1.5, size=0.2]{A44};
\Vertex[x=1.5,y=0.8, size=0.2]{B44};
\Vertex[x=0.9,y=0.5, size=0.2]{C44};
\Vertex[x=2.1,y=0.5, size=0.2]{D44};
\Text[x=1.5,y=0]{$G_{22}$}
\Edge[Direct](A44)(B44);
\Edge[Direct](B44)(C44);
\Edge[Direct](B44)(D44);
\Edge[Direct](A44)(C44);

\Vertex[x=4,y=1.5, size=0.2]{A45};
\Vertex[x=4,y=0.8, size=0.2]{B45};
\Vertex[x=3.4,y=0.5, size=0.2]{C45};
\Vertex[x=4.6,y=0.5, size=0.2]{D45};
\Text[x=4,y=0]{$G_{23}$}
\Edge[Direct](A45)(B45);
\Edge[Direct](B45)(C45);
\Edge[Direct](B45)(D45);
\Edge[Direct](C45)(A45);

\Vertex[x=6.5,y=1.5, size=0.2]{A46};
\Vertex[x=6.5,y=0.8, size=0.2]{B46};
\Vertex[x=5.9,y=0.5, size=0.2]{C46};
\Vertex[x=7.1,y=0.5, size=0.2]{D46};
\Text[x=6.5,y=0]{$G_{24}$}
\Edge[Direct](B46)(A46);
\Edge[Direct](B46)(C46);
\Edge[Direct](B46)(D46);
\Edge[Direct](C46)(A46);

\Vertex[x=-6,y=-1.2, size=0.2]{A51};
\Vertex[x=-6,y=-1.9, size=0.2]{B51};
\Vertex[x=-6.6,y=-2.2, size=0.2]{C51};
\Vertex[x=-5.4,y=-2.2, size=0.2]{D51};
\Text[x=-6,y=-2.7]{$G_{25}$}
\Edge[Direct](C51)(A51);
\Edge[Direct](B51)(C51);
\Edge[Direct](B51)(D51);
\Edge[Direct](D51)(A51);

\Vertex[x=-3.5,y=-1.2, size=0.2]{A52};
\Vertex[x=-3.5,y=-1.9, size=0.2]{B52};
\Vertex[x=-4.1,y=-2.2, size=0.2]{C52};
\Vertex[x=-2.9,y=-2.2, size=0.2]{D52};
\Text[x=-3.5,y=-2.7]{$G_{26}$}
\Edge[Direct](C52)(A52);
\Edge[Direct](C52)(B52);
\Edge[Direct](B52)(D52);
\Edge[Direct](D52)(A52);

\Vertex[x=-1,y=-1.2, size=0.2]{A53};
\Vertex[x=-1,y=-1.9, size=0.2]{B53};
\Vertex[x=-1.6,y=-2.2, size=0.2]{C53};
\Vertex[x=-0.4,y=-2.2, size=0.2]{D53};
\Text[x=-1,y=-2.7]{$G_{27}$}
\Edge[Direct](C53)(A53);
\Edge[Direct](C53)(B53);
\Edge[Direct](D53)(B53);
\Edge[Direct](D53)(A53);

\Vertex[x=1.5,y=-1.2, size=0.2]{A54};
\Vertex[x=1.5,y=-1.9, size=0.2]{B54};
\Vertex[x=0.9,y=-2.2, size=0.2]{C54};
\Vertex[x=2.1,y=-2.2, size=0.2]{D54};
\Text[x=1.5,y=-2.7]{$G_{28}$}
\Edge[Direct](A54)(C54);
\Edge[Direct](C54)(B54);
\Edge[Direct](B54)(D54);
\Edge[Direct](D54)(A54);

\Vertex[x=4,y=-1.2, size=0.2]{A55};
\Vertex[x=4,y=-1.9, size=0.2]{B55};
\Vertex[x=3.4,y=-2.2, size=0.2]{C55};
\Vertex[x=4.6,y=-2.2, size=0.2]{D55};
\Text[x=4,y=-2.7]{$G_{29}$}
\Edge[Direct](C55)(A55);
\Edge[Direct](D55)(A55);
\Edge[Direct](A55)(B55);
\Edge[Direct](C55)(B55);
\Edge[Direct](D55)(B55);

\Vertex[x=6.5,y=-1.2, size=0.2]{A56};
\Vertex[x=6.5,y=-1.9, size=0.2]{B56};
\Vertex[x=5.9,y=-2.2, size=0.2]{C56};
\Vertex[x=7.1,y=-2.2, size=0.2]{D56};
\Text[x=6.5,y=-2.7]{$G_{30}$}
\Edge[Direct](C56)(A56);
\Edge[Direct](D56)(A56);
\Edge[Direct](A56)(B56);
\Edge[Direct](C56)(B56);
\Edge[Direct](B56)(D56);

\Vertex[x=-6,y=-3.9, size=0.2]{A61};
\Vertex[x=-6,y=-4.6, size=0.2]{B61};
\Vertex[x=-6.6,y=-4.9, size=0.2]{C61};
\Vertex[x=-5.4,y=-4.9, size=0.2]{D61};
\Text[x=-6,y=-5.4]{$G_{31}$}
\Edge[Direct](C61)(A61);
\Edge[Direct](A61)(D61);
\Edge[Direct](A61)(B61);
\Edge[Direct](C61)(B61);
\Edge[Direct](D61)(B61);

\Vertex[x=-3.5,y=-3.9, size=0.2]{A62};
\Vertex[x=-3.5,y=-4.6, size=0.2]{B62};
\Vertex[x=-4.1,y=-4.9, size=0.2]{C62};
\Vertex[x=-2.9,y=-4.9, size=0.2]{D62};
\Text[x=-3.5,y=-5.4]{$G_{32}$}
\Edge[Direct](A62)(C62);
\Edge[Direct](A62)(D62);
\Edge[Direct](B62)(A62);
\Edge[Direct](C62)(B62);
\Edge[Direct](D62)(B62);

\Vertex[x=-1,y=-3.9, size=0.2]{A63};
\Vertex[x=-1,y=-4.6, size=0.2]{B63};
\Vertex[x=-1.6,y=-4.9, size=0.2]{C63};
\Vertex[x=-0.4,y=-4.9, size=0.2]{D63};
\Text[x=-1,y=-5.4]{$G_{33}$}
\Edge[Direct](C63)(A63);
\Edge[Direct](A63)(D63);
\Edge[Direct](A63)(B63);
\Edge[Direct](C63)(B63);
\Edge[Direct](B63)(D63);

\Vertex[x=1.5,y=-3.9, size=0.2]{A64};
\Vertex[x=1.5,y=-4.6, size=0.2]{B64};
\Vertex[x=0.9,y=-4.9, size=0.2]{C64};
\Vertex[x=2.1,y=-4.9, size=0.2]{D64};
\Text[x=1.5,y=-5.4]{$G_{34}$}
\Edge[Direct](C64)(A64);
\Edge[Direct](A64)(D64);
\Edge[Direct](A64)(B64);
\Edge[Direct](B64)(C64);
\Edge[Direct](D64)(B64);

\Vertex[x=4,y=-3.9, size=0.2]{A65};
\Vertex[x=4,y=-4.6, size=0.2]{B65};
\Vertex[x=3.4,y=-4.9, size=0.2]{C65};
\Vertex[x=4.6,y=-4.9, size=0.2]{D65};
\Text[x=4,y=-5.4]{$G_{35}$}
\Edge[Direct](A65)(C65);
\Edge[Direct](A65)(D65);
\Edge[Direct](A65)(B65);
\Edge[Direct](C65)(B65);
\Edge[Direct](D65)(B65);

\Vertex[x=6.5,y=-3.9, size=0.2]{A66};
\Vertex[x=6.5,y=-4.6, size=0.2]{B66};
\Vertex[x=5.9,y=-4.9, size=0.2]{C66};
\Vertex[x=7.1,y=-4.9, size=0.2]{D66};
\Text[x=6.5,y=-5.4]{$G_{36}$}
\Edge[Direct](C66)(A66);
\Edge[Direct](A66)(D66);
\Edge[Direct](A66)(B66);
\Edge[Direct](B66)(C66);
\Edge[Direct](B66)(D66);

\Vertex[x=-6,y=-6.6, size=0.2]{A71};
\Vertex[x=-6,y=-7.3, size=0.2]{B71};
\Vertex[x=-6.6,y=-7.6, size=0.2]{C71};
\Vertex[x=-5.4,y=-7.6, size=0.2]{D71};
\Text[x=-6,y=-8.1]{$G_{37}$}
\Edge[Direct](A71)(C71);
\Edge[Direct](A71)(D71);
\Edge[Direct](A71)(B71);
\Edge[Direct](C71)(B71);
\Edge[Direct](B71)(D71);

\Vertex[x=-3.5,y=-6.6, size=0.2]{A72};
\Vertex[x=-3.5,y=-7.3, size=0.2]{B72};
\Vertex[x=-4.1,y=-7.6, size=0.2]{C72};
\Vertex[x=-2.9,y=-7.6, size=0.2]{D72};
\Text[x=-3.5,y=-8.1]{$G_{38}$}
\Edge[Direct](A72)(C72);
\Edge[Direct](A72)(D72);
\Edge[Direct](A72)(B72);
\Edge[Direct](B72)(C72);
\Edge[Direct](B72)(D72);

\Vertex[x=-1,y=-6.6, size=0.2]{A73};
\Vertex[x=-1,y=-7.3, size=0.2]{B73};
\Vertex[x=-1.6,y=-7.6, size=0.2]{C73};
\Vertex[x=-0.4,y=-7.6, size=0.2]{D73};
\Text[x=-1,y=-8.1]{$G_{39}$}
\Edge[Direct](C73)(A73);
\Edge[Direct](A73)(D73);
\Edge[Direct](C73)(D73);
\Edge[Direct](A73)(B73);
\Edge[Direct](C73)(B73);
\Edge[Direct](D73)(B73);

\Vertex[x=1.5,y=-6.6, size=0.2]{A74};
\Vertex[x=1.5,y=-7.3, size=0.2]{B74};
\Vertex[x=0.9,y=-7.6, size=0.2]{C74};
\Vertex[x=2.1,y=-7.6, size=0.2]{D74};
\Text[x=1.5,y=-8.1]{$G_{40}$}
\Edge[Direct](C74)(A74);
\Edge[Direct](A74)(D74);
\Edge[Direct](C74)(D74);
\Edge[Direct](B74)(A74);
\Edge[Direct](C74)(B74);
\Edge[Direct](D74)(B74);

\Vertex[x=4,y=-6.6,size=0.2]{A75};
\Vertex[x=4,y=-7.3, size=0.2]{B75};
\Vertex[x=3.4,y=-7.6, size=0.2]{C75};
\Vertex[x=4.6,y=-7.6, size=0.2]{D75};
\Text[x=4,y=-8.1]{$G_{41}$}
\Edge[Direct](C75)(A75);
\Edge[Direct](A75)(D75);
\Edge[Direct](D75)(C75);
\Edge[Direct](B75)(A75);
\Edge[Direct](C75)(B75);
\Edge[Direct](D75)(B75);

\Vertex[x=6.5,y=-6.6, size=0.2]{A76};
\Vertex[x=6.5,y=-7.3, size=0.2]{B76};
\Vertex[x=5.9,y=-7.6, size=0.2]{C76};
\Vertex[x=7.1,y=-7.6, size=0.2]{D76};
\Text[x=6.5,y=-8.1]{$G_{42}$}
\Edge[Direct](C76)(A76);
\Edge[Direct](A76)(D76);
\Edge[Direct](D76)(C76);
\Edge[Direct](A76)(B76);
\Edge[Direct](C76)(B76);
\Edge[Direct](D76)(B76);

\end{tikzpicture}
\caption{The $42$ isomorphism types of oriented graphs of order $4$.}
\label{fig:4graphs}
\end{figure}

As was elaborated in Subsection~\ref{subsec:SDP}, we use the set of types $\Sigma = \{\emptyset, \bar{E}, E\}$, where \emph{the empty type} $\emptyset$ has no vertices, \emph{the non-edge type} $\bar{E}$ has two vertices and no edges, and \emph{the edge type} $E$ has two vertices and the edge $(1,2)$ which is  directed from the vertex labelled $1$ to the vertex labelled $2$ (we will not use the type having two vertices and an edge in the opposite direction, as it will clearly provide no additional information).
Although the empty type is not really necessary (i.e., we can obtain the same results without it), we keep it, since it helps in illustrating some of our calculations.

As was further elaborated in Subsection~\ref{subsec:SDP}, the set of flags that we use is ${\mathcal F} = {\mathcal F}_{\emptyset} \cup {\mathcal F}_{\bar{E}} \cup {\mathcal F}_{E}$, where ${\mathcal F}_{\emptyset}$ is the set of all flags over $\emptyset$ with $2$ petals (see Figure~\ref{fig:flags_empty}), ${\mathcal F}_{\bar{E}}$ is the set of all flags over $\bar{E}$ with $1$ petal (see Figure~\ref{fig:flags_nonedge}), and ${\mathcal F}_{E}$ is the set of all flags over $E$ with $1$ petal (see Figure~\ref{fig:flags_edge}). Observe that $|{\mathcal F}_{\emptyset}|=2$ and $|{\mathcal F}_{\bar{E}}| = |{\mathcal F}_{E}|=9$. Hence, in total, $|{\mathcal F}| = 2 + 9 + 9 = 20$. 

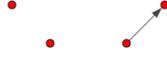
\begin{figure}
\centering
\begin{tikzpicture}

\Vertex[x=-2,y=-0.7, size=0.4]{A1};
\Vertex[x=-2,y=0.7, size=0.4]{B1};

\Vertex[x=2,y=-0.7, size=0.4]{A2};
\Vertex[x=2,y=0.7, size=0.4]{B2};
\Edge[Direct](B2)(A2);

\end{tikzpicture}
\caption{Flags over the empty type.}
\label{fig:flags_empty}
\end{figure}

\begin{figure}
\centering
\begin{tikzpicture}

\Vertex[x=-6.7,y=2.4, label=$1$, size=0.4]{A1};
\Vertex[x=-5.3,y=2.4, label=$2$, size=0.4]{B1};
\Vertex[x=-6,y=3.6,  size=0.4]{C1};

\Vertex[x=-3.7,y=2.4,label=$1$, size=0.4]{A2};
\Vertex[x=-2.3,y=2.4, label=$2$, size=0.4]{B2};
\Vertex[x=-3,y=3.6,  size=0.4]{C2};
\Edge[Direct](A2)(C2);

\Vertex[x=-0.7,y=2.4, label=$1$, size=0.4]{A3};
\Vertex[x=0.7,y=2.4, label=$2$, size=0.4]{B3};
\Vertex[x=0,y=3.6,  size=0.4]{C3};
\Edge[Direct](C3)(A3);

\Vertex[x=2.3,y=2.4, label=$1$, size=0.4]{A4};
\Vertex[x=3.7,y=2.4, label=$2$, size=0.4]{B4};
\Vertex[x=3,y=3.6,  size=0.4]{C4};
\Edge[Direct](B4)(C4);

\Vertex[x=5.3,y=2.4,label=$1$, size=0.4]{A5};
\Vertex[x=6.7,y=2.4, label=$2$, size=0.4]{B5};
\Vertex[x=6,y=3.6,  size=0.4]{C5};
\Edge[Direct](C5)(B5);

\Vertex[x=-5.2,y=0, label=$1$, size=0.4]{A6};
\Vertex[x=-3.8,y=0, label=$2$, size=0.4]{B6};
\Vertex[x=-4.5,y=1.2,  size=0.4]{C6};
\Edge[Direct](A6)(C6);
\Edge[Direct](B6)(C6);

\Vertex[x=-2.2,y=0, label=$1$, size=0.4]{A7};
\Vertex[x=-0.8,y=0, label=$2$, size=0.4]{B7};
\Vertex[x=-1.5,y=1.2,  size=0.4]{C7};
\Edge[Direct](A7)(C7);
\Edge[Direct](C7)(B7);

\Vertex[x=0.8,y=0,label=$1$, size=0.4]{A8};
\Vertex[x=2.2,y=0, label=$2$, size=0.4]{B8};
\Vertex[x=1.5,y=1.2,  size=0.4]{C8};
\Edge[Direct](B8)(C8);
\Edge[Direct](C8)(A8);

\Vertex[x=3.8,y=0, label=$1$, size=0.4]{A9};
\Vertex[x=5.2,y=0, label=$2$, size=0.4]{B9};
\Vertex[x=4.5,y=1.2,  size=0.4]{C9};
\Edge[Direct](C9)(A9);
\Edge[Direct](C9)(B9);

\end{tikzpicture}
\caption{Flags over the non-edge type.}
\label{fig:flags_nonedge}
\end{figure}

\begin{figure}
\centering
\begin{tikzpicture}

\Vertex[x=-6.7,y=2.4, label=$1$, size=0.4]{A1};
\Vertex[x=-5.3,y=2.4, label=$2$, size=0.4]{B1};
\Vertex[x=-6,y=3.6,  size=0.4]{C1};
\Edge[Direct](A1)(B1);

\Vertex[x=-3.7,y=2.4,label=$1$, size=0.4]{A2};
\Vertex[x=-2.3,y=2.4, label=$2$, size=0.4]{B2};
\Vertex[x=-3,y=3.6,  size=0.4]{C2};
\Edge[Direct](A2)(B2);
\Edge[Direct](A2)(C2);

\Vertex[x=-0.7,y=2.4, label=$1$, size=0.4]{A3};
\Vertex[x=0.7,y=2.4, label=$2$, size=0.4]{B3};
\Vertex[x=0,y=3.6,  size=0.4]{C3};
\Edge[Direct](A3)(B3);
\Edge[Direct](C3)(A3);

\Vertex[x=2.3,y=2.4, label=$1$, size=0.4]{A4};
\Vertex[x=3.7,y=2.4, label=$2$, size=0.4]{B4};
\Vertex[x=3,y=3.6,  size=0.4]{C4};
\Edge[Direct](A4)(B4);
\Edge[Direct](B4)(C4);

\Vertex[x=5.3,y=2.4,label=$1$, size=0.4]{A5};
\Vertex[x=6.7,y=2.4, label=$2$, size=0.4]{B5};
\Vertex[x=6,y=3.6,  size=0.4]{C5};
\Edge[Direct](A5)(B5);
\Edge[Direct](C5)(B5);

\Vertex[x=-5.2,y=0, label=$1$, size=0.4]{A6};
\Vertex[x=-3.8,y=0, label=$2$, size=0.4]{B6};
\Vertex[x=-4.5,y=1.2,  size=0.4]{C6};
\Edge[Direct](A6)(B6);
\Edge[Direct](A6)(C6);
\Edge[Direct](B6)(C6);

\Vertex[x=-2.2,y=0, label=$1$, size=0.4]{A7};
\Vertex[x=-0.8,y=0, label=$2$, size=0.4]{B7};
\Vertex[x=-1.5,y=1.2,  size=0.4]{C7};
\Edge[Direct](A7)(B7);
\Edge[Direct](A7)(C7);
\Edge[Direct](C7)(B7);

\Vertex[x=0.8,y=0,label=$1$, size=0.4]{A8};
\Vertex[x=2.2,y=0, label=$2$, size=0.4]{B8};
\Vertex[x=1.5,y=1.2,  size=0.4]{C8};
\Edge[Direct](A8)(B8);
\Edge[Direct](B8)(C8);
\Edge[Direct](C8)(A8);

\Vertex[x=3.8,y=0, label=$1$, size=0.4]{A9};
\Vertex[x=5.2,y=0, label=$2$, size=0.4]{B9};
\Vertex[x=4.5,y=1.2,  size=0.4]{C9};
\Edge[Direct](A9)(B9);
\Edge[Direct](C9)(A9);
\Edge[Direct](C9)(B9);

\end{tikzpicture}
\caption{Flags over the edge type.}
\label{fig:flags_edge}
\end{figure}

For every $1 \leq i \leq 42$, let $c_i = t(G_i) + i(G_i)$. As was explained in Subsection~\ref{subsec:SDP}, we seek the optimum of the following semidefinite program.

\begin{eqnarray}\label{sdp} 
\begin{aligned}
&\text{Variables: } p_1,\ldots,p_{42} \\
&\text{Goal: } minimize \sum_{i=1}^{42} p_i c_i \\
&\text{Constraints: } \\
& p_1,\ldots,p_{42} \geq 0 \\
& \sum_{i=1}^{42} p_i = 1 \\
& \sum_{i=1}^{42} p_i A_i \succeq 0 
\end{aligned} 
\end{eqnarray}

Setting $p_i = \lim_{n \to \infty} p(G_i, {\mathcal B}_n)$ for every $1 \leq i \leq 42$, that is, $p_1 = 1/27, \; p_7 = 4/27, \; p_{10} = 4/27, \; p_{27} = 6/27, \; p_{32} = 12/27$, and $p_i = 0$ for every $i \in [42] \setminus \{1, 7, 10, 27, 32\}$, shows that the optimum of SDP~\eqref{sdp} is at most $1/9$. 

By Proposition \ref{prop:certificate}, the following theorem implies Theorem~\ref{main}. 

\begin{theorem} \label{thm2}
There is a $\frac{1}{9}$-certificate $Q$ for the SDP~\eqref{sdp}.
\end{theorem}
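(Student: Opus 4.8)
The plan is to exhibit such a $Q$ explicitly, guided by the extremal configurations, and then to verify its defining inequalities by a finite rational computation. First I would record the linear conditions that \emph{every} $\tfrac19$-certificate is forced to satisfy. Since $Q$ and each $A_i$ are block-diagonal with one block per type in $\Sigma=\{\emptyset,\bar E,E\}$ (of sizes $2,9,9$), it suffices to look for a block-diagonal $Q$, one PSD block per type. Writing $A_{\mathcal B}=\lim_{n\to\infty}A_{{\mathcal B}_n}$ and, for each small $\varepsilon>0$, $A^{\varepsilon}_{\mathcal B}=\lim_{n\to\infty}A_{{\mathcal B}^{\varepsilon}_n}$, Observation~\ref{obs:matrix} expresses these as $\sum_i p_i A_{G_i}$ for the limiting densities $p_i$ (for ${\mathcal B}_n$ these are $p_1=\tfrac1{27},p_7=p_{10}=\tfrac4{27},p_{27}=\tfrac6{27},p_{32}=\tfrac{12}{27}$). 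Because $t({\mathcal B}_n)+i({\mathcal B}_n)\to\tfrac19$ by Observation~\ref{obs:blowup}, every inequality in the proof of Proposition~\ref{prop:certificate} must become an equality in the limit; hence for each $\tfrac19$-certificate $Q$ we get $\langle Q,A_{\mathcal B}\rangle=0$ and $\langle Q,A^{\varepsilon}_{\mathcal B}\rangle=0$, and, since $Q$ and these (PSD, via Lemma~\ref{lem::almostPSD}) matrices are positive semidefinite, their ranges lie in $\ker Q$; moreover $c_i=\langle Q,A_{G_i}\rangle+\tfrac19$ for every $i$ with positive limiting density in ${\mathcal B}_n$ or in ${\mathcal B}^{\varepsilon}_n$. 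In Section~\ref{sec::kernel} I would distil from these range conditions a fixed subspace $K\subseteq\mathbb R^{|\mathcal F|}$ contained in the kernel of every $\tfrac19$-certificate, and in Section~\ref{sec::sharp} I would use the handful of $4$-vertex oriented graphs that occur with positive density in the perturbed construction ${\mathcal B}^{\varepsilon}_n$ to identify exactly which of the $42$ inequalities must be tight and to extract the resulting linear equations on the entries of $Q$.

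With $K$ in hand, Section~\ref{sec::projection} reduces the dimension: any $\tfrac19$-certificate vanishes on $K$, so it is determined by its restriction to $K^{\perp}$, and the search for $Q$ becomes a semidefinite feasibility problem of strictly smaller size, together with the exact linear side constraints coming from the tight inequalities of Section~\ref{sec::sharp}. In Section~\ref{sec:rounding} I would hand this projected problem to an SDP solver, obtain a floating-point approximate feasible point, and then round it to a matrix with rational entries, keeping the coordinates dictated by the tight constraints exactly on the boundary and nudging the (now harmless) small eigenvalues so that positive semidefiniteness is preserved. Finally I would pull the rounded matrix back to a $20\times20$ block-diagonal matrix $Q$ on all of $\mathbb R^{|\mathcal F|}$ by extending it by zero on $K$, and verify directly — a finite computation with rational numbers — that $Q\succeq0$ and that $c_i\ge\langle Q,A_{G_i}\rangle+\tfrac19$ for all $1\le i\le42$; by Proposition~\ref{prop:certificate} this proves Theorem~\ref{thm2} and hence Theorem~\ref{main}.

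The main obstacle is precisely the rounding-and-verification step. The solver returns only an approximate optimum, and naive rounding almost always either destroys positive semidefiniteness (through the eigenvalues that ought to be exactly $0$) or violates one of the inequalities that is tight at the optimum. This is why the kernel computation of Section~\ref{sec::kernel} and the sharpness analysis of Section~\ref{sec::sharp} have to be carried out \emph{exactly} first: projecting out $K$ removes the forced zero eigenvalues, and imposing the tight constraints as exact equalities insulates the boundary inequalities from perturbation, so that on the reduced problem the numerical solution is strictly feasible with room to spare and can be rounded safely. Determining $K$ and the correct set of tight indices on the nose, rather than approximately, is the genuinely theoretical part of the argument, and the success of everything downstream hinges on getting it right.
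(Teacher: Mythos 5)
Your overall architecture is exactly the paper's: forced kernel vectors from the extremal constructions, tightness equations from the perturbed construction, projection onto the orthogonal complement of the kernel, an SDP solve plus careful rounding on the reduced problem, pull-back and exact rational verification via Proposition~\ref{prop:certificate}. However, two of the concrete claims you make about how to extract the forced conditions from ${\mathcal B}_n^{\varepsilon}$ are wrong as stated, and both would derail the execution.

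First, $\langle Q, A^{\varepsilon}_{\mathcal B}\rangle=0$ is false for fixed $\varepsilon>0$: since $t({\mathcal B}_n^{\varepsilon})+i({\mathcal B}_n^{\varepsilon})=\frac19+\Theta(\varepsilon^2)+O(1/n)$, the argument only yields $0\le\langle Q, A^{\varepsilon}_{\mathcal B}\rangle\le C\varepsilon^2$. Letting $\varepsilon\to0$ in the range condition just recovers the range of $A_{\mathcal B}$, so this route does \emph{not} produce the two additional kernel vectors of $Q_{\bar E}$. The paper gets them (Claim~\ref{claim:ker} and Lemma~\ref{lem:ker}) by restricting to the sub-collection ${\mathcal R}$ of $\bar E$-rootings sitting on \emph{deleted} edges, whose size is $\Theta(\varepsilon n^2)$; the trade-off $\sqrt{|{\mathcal R}_{\sigma}|/|{\mathcal R}|}\cdot\sqrt{O(\varepsilon^2)}=O(\sqrt{\varepsilon})$ is what lets one pass to the first-order-in-$\varepsilon$ perturbation of the rooting vectors and conclude that it, too, lies in $\ker Q_{\bar E}$. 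Some argument of this refined type is needed; without these two extra kernel vectors the projection does not remove all near-zero eigenvalues and the rounding step fails.

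Second, your tightness criterion --- ``every $i$ with positive limiting density in ${\mathcal B}_n^{\varepsilon}$'' --- is too inclusive. The inequality one actually gets is $\bigl(c_i-\langle Q,A_{G_i}\rangle-\frac19\bigr)\,\mathbb{E}\,p(G_i,{\mathcal B}_n^{\varepsilon})\le O(\varepsilon^2)+O(1/n)$, so tightness is forced only when $\mathbb{E}\,p(G_i,{\mathcal B}_n^{\varepsilon})=\Omega(\varepsilon)$ as $\varepsilon\to0^+$ (the paper's definition of \emph{sharp} in Section~\ref{sec::sharp}, giving exactly $11$ graphs). Many more $4$-vertex graphs have positive density in ${\mathcal B}_n^{\varepsilon}$ for fixed $\varepsilon$ --- namely every spanning subgraph of an induced subgraph of ${\mathcal B}_n$, whose densities are $\Theta(\varepsilon^2)$ or smaller --- and for those the inequality need not be tight (and indeed is strict for the certificate the paper constructs). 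Imposing equality for all of them would over-constrain the affine subspace, very likely to the point of excluding every genuine certificate, so your SDP feasibility problem would come back empty. With these two corrections your plan coincides with the paper's proof.
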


Our goal is thus to prove Theorem~\ref{thm2} by finding such a matrix $Q$. To this end we ran the csdp solver~\cite{csdp} on SDP~\eqref{sdp}. Since this solver is inherently an approximation algorithm, it does not necessarily output the true optimum (inaccuracies may also be incurred due to the computational complexity of this task, the computer's limited numerical precision, and the fact that it operates with floating point). Given any arbitrarily small constant $\eta > 0$, we can only check whether the optimum is at least $\frac{1}{9} - \eta$. We chose $\eta = 10^{-8}$. 
Crucially, the solver's output includes a {\em rational} certificate showing that the optimum is indeed at least $\frac{1}{9} - \eta$. We have \emph{rounded} that certificate to rational numbers with $4$ decimal digits. We use this output as the starting point for the remainder of the proof. We aim to slightly perturb this certificate so as to make it a $\frac{1}{9}$-certificate. We start by finding certain constraints that any proper $\frac{1}{9}$-certificate must satisfy.

It clearly suffices to consider certificates $Q$ with the same block structure as the matrices $A_{i}$, i.e., block-diagonal matrices with blocks $Q_{\emptyset}, Q_{\bar{E}}, Q_{E}$ of sizes $2 \times 2$, $9 \times 9$, and $9 \times 9$, respectively. Since $Q$ is symmetric, we presently have only  $\binom{3}{2} + 2\binom{10}{2} = 93$ unknowns to discover. We now prove several auxiliary claims which will serve us in finding additional restrictions that $Q$ must obey.

\begin{claim}\label{claim:cMA}
If $G$ is an $n$-vertex oriented graph and $M$ is a real $20 \times 20$ matrix, then
$$ 
\sum_{i=1}^{42} p(G_i,G) \left(c_i - \langle M, A_i \rangle - \textstyle \frac{1}{9} \right) = t(G) + i(G) - \textstyle\frac{1}{9} - \langle M, A_G \rangle.
$$
\end{claim}

\begin{proof}
The claim readily follows since
$$
t(G) + i(G) - \textstyle \frac{1}{9} = \sum_{i=1}^{42} c_i  p(G_i, G) - \textstyle \frac{1}{9} \sum_{i=1}^{42} p(G_i, G) = \sum_{i=1}^{42} p(G_i, G) \left(c_i - \textstyle \frac{1}{9}\right),
$$
by Observation~\ref{obs:objective}, and 
$$
\langle M, A_G \rangle = \langle M, \sum_{i=1}^{42}  p(G_i, G) A_i \rangle = \sum_{i=1}^{42} p(G_i, G) \langle M, A_i \rangle,
$$
by Observation~\ref{obs:matrix}.
\end{proof}

\begin{claim}\label{claim:MA}
For every $20 \times 20$ matrix $M$, there is a positive constant $C_M$ such that  
\begin{equation*}
\lvert\langle M, A_G\rangle-\langle M, \tilde{A}_G \rangle\rvert \leq \textstyle\frac{C_M}{n}
\end{equation*}
holds for every $n$-vertex oriented graph $G$.
\end{claim}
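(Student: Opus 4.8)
The plan is to read this off directly from part~2 of Lemma~\ref{lem::almostPSD}. First I would observe that every flag in the family $\mathcal{F}$ has at most $3$ vertices and sits over a type of size at most $2$, so the per-entry estimate of Lemma~\ref{lem::almostPSD} is \emph{uniform}: for all $F_1,F_2\in\mathcal F$ over a common type $\sigma$ we have $|p(F_1,F_2;G)-\tilde p(F_1,F_2;G)|\le\frac{(|F_1|-|\sigma|)(|F_2|-|\sigma|)}{n-|\sigma|}\le\frac{C}{n}$ for an absolute constant $C$ (and the two quantities are equal, both being $0$, when $F_1,F_2$ lie over different types). Equivalently, $\lVert A_G-\tilde A_G\rVert_\infty\le C/n$.

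Next I would expand the inner product entrywise. Writing $M=(m_{r,s})_{1\le r,s\le 20}$,
\[
\bigl|\langle M,A_G\rangle-\langle M,\tilde A_G\rangle\bigr|=\Bigl|\sum_{r,s}m_{r,s}\bigl((A_G)_{r,s}-(\tilde A_G)_{r,s}\bigr)\Bigr|\le\Bigl(\sum_{r,s}|m_{r,s}|\Bigr)\lVert A_G-\tilde A_G\rVert_\infty\le\frac{C\sum_{r,s}|m_{r,s}|}{n},
\]
so the claim holds with $C_M:=\max\bigl\{1,\,C\sum_{r,s}|m_{r,s}|\bigr\}$; the (harmless) maximum with $1$ only ensures that $C_M>0$ as stated and simultaneously absorbs the finitely many small values of $n$, for which $A_G$ and $\tilde A_G$ have entries in $[0,1]$ anyway.

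I expect no real obstacle here; the single point requiring a moment's care is that the denominator in Lemma~\ref{lem::almostPSD} is $n-|\sigma|$ rather than $n$, so one should note that $|\sigma|\le 2$ for all the types in play and fold the bounded small-$n$ regime into the constant.
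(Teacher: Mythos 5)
Your proof is correct and follows essentially the same route as the paper: invoke part~2 of Lemma~\ref{lem::almostPSD} to get $\lVert A_G-\tilde A_G\rVert_\infty\le C/n$ and then bound $\lvert\langle M,A_G-\tilde A_G\rangle\rvert$ by $\lVert M\rVert_1\lVert A_G-\tilde A_G\rVert_\infty$. The extra remarks about $|\sigma|\le 2$ and the max with $1$ are harmless elaborations of what the paper's lemma already supplies.
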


\begin{proof}
By the second part of Lemma~\ref{lem::almostPSD}, there is a positive constant $C$ such that, for every $n$-vertex oriented graph $G$, it holds that
$$
\lVert A_G - \tilde{A}_G \rVert_{\infty} \leq \textstyle\frac{C}{n},
$$
and thus
\begin{equation*}
\lvert \langle M, A_G \rangle - \langle M, \tilde{A}_G \rangle \rvert = \lvert \langle M, A_G - \tilde{A}_G \rangle \rvert \leq \lVert M \rVert_1 \lVert A_G - \tilde{A}_G \rVert_{\infty} \leq \textstyle\frac{\lVert M \rVert_1 C}{n}.
\qedhere\end{equation*}
\end{proof}

\begin{claim}\label{claim:cQA}
Let $Q$ be a $\frac{1}{9}$-certificate for the SDP~\eqref{sdp}. 
Then there exists a positive constant $C_Q$ such that  
\begin{equation*}
p(G_i,G) \left(c_i - \langle Q, A_i \rangle - \textstyle\frac{1}{9}\right) \leq i(G) + t(G) - \textstyle\frac{1}{9} + \textstyle\frac{C_Q}{n}
\end{equation*}
holds for every $n$-vertex oriented graph $G$ and every $1 \leq i \leq 42$.
\end{claim}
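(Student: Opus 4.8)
The plan is to assemble Claims~\ref{claim:cMA} and~\ref{claim:MA} with the PSD-ness of $Q$ and of $\tilde{A}_G$, handling the left inequality, the right inequality, and the \emph{moreover} clause in turn. The left inequality is immediate: since $Q$ is a $\frac{1}{9}$-certificate, the definition of an $\alpha$-certificate gives $c_i - \langle Q, A_i \rangle - \frac{1}{9} \geq 0$ for every $1 \leq i \leq 42$, and $p(G_i, G) \geq 0$; hence every summand $p(G_i,G)\left(c_i - \langle Q, A_i \rangle - \frac{1}{9}\right)$ is non-negative, and so is the sum. I would record this non-negativity of each individual summand, as it is exactly what the \emph{moreover} clause needs.

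For the right inequality, I would first apply Claim~\ref{claim:cMA} with $M = Q$ to rewrite the sum as $t(G) + i(G) - \frac{1}{9} - \langle Q, A_G \rangle$, so that it suffices to produce a positive constant $C_Q$, independent of $G$ and $n$, with $\langle Q, A_G \rangle \geq -\frac{C_Q}{n}$. This is the one step carrying any content: we may not conclude $\langle Q, A_G \rangle \geq 0$ outright, since $A_G$ need not be PSD. Instead, $\tilde{A}_G$ is PSD by part~1 of Lemma~\ref{lem::almostPSD}, whence $\langle Q, \tilde{A}_G \rangle \geq 0$ by Lemma~\ref{lem:scalar_product}; and Claim~\ref{claim:MA} with $M = Q$ supplies a positive constant $C_Q$ with $\lvert \langle Q, A_G \rangle - \langle Q, \tilde{A}_G \rangle \rvert \leq \frac{C_Q}{n}$ for every $n$-vertex oriented graph $G$. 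Combining these, $\langle Q, A_G \rangle \geq \langle Q, \tilde{A}_G \rangle - \frac{C_Q}{n} \geq -\frac{C_Q}{n}$, and substituting back gives $\sum_{i=1}^{42} p(G_i,G)\left(c_i - \langle Q, A_i\rangle - \frac{1}{9}\right) \leq t(G) + i(G) - \frac{1}{9} + \frac{C_Q}{n}$, as required.

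The \emph{moreover} clause then follows with no further work: since each summand $p(G_i,G)\left(c_i - \langle Q, A_i \rangle - \frac{1}{9}\right)$ is non-negative, it is bounded above by the whole sum, hence by $i(G) + t(G) - \frac{1}{9} + \frac{C_Q}{n}$, and the same constant $C_Q$ serves both displayed inequalities. I do not expect any real obstacle here; the proof is a short composition of Claims~\ref{claim:cMA} and~\ref{claim:MA} with Lemmas~\ref{lem::almostPSD} and~\ref{lem:scalar_product}. The only observation worth flagging is that the hypothesis ``$Q$ is PSD'' must be used against the genuinely PSD matrix $\tilde{A}_G$ and not against $A_G$, with the resulting discrepancy absorbed into the $\frac{C_Q}{n}$ term. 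Note also that the constant so produced depends on $Q$ only through $\lVert Q \rVert_1$, and not on $G$ or $n$, which is exactly what the statement demands.
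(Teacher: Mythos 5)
Your proposal is correct and follows essentially the same route as the paper: Claim~\ref{claim:cMA} with $M=Q$ rewrites the sum, Claim~\ref{claim:MA} plus the PSD-ness of $\tilde{A}_G$ (Lemma~\ref{lem::almostPSD}, part 1) and Lemma~\ref{lem:scalar_product} give $\langle Q, A_G\rangle \geq -\frac{C_Q}{n}$, and the certificate property makes every summand non-negative, which yields both the left inequality and the \emph{moreover} clause. No gaps.
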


\begin{proof}
Let $G$ be an arbitrary $n$-vertex oriented graph. By Claim~\ref{claim:MA} there exists a positive constant $C_Q$, which depends only on the matrix $Q$, such that 
\begin{equation*}
\lvert \langle Q, A_G \rangle - \langle Q, \tilde{A}_G \rangle \rvert \leq \textstyle\frac{C_Q}{n}.
\end{equation*}
It thus follows by Lemma~\ref{lem:scalar_product} that
\begin{equation*}
\langle Q, A_G \rangle \geq \langle Q, \tilde{A}_G \rangle - \textstyle\frac{C_Q}{n} \geq - \textstyle\frac{C_Q}{n}.
\end{equation*}
Then, for every $1 \leq i \leq 42$, 
\begin{align*}
p(G_i,G) \left(c_i - \langle Q, A_i \rangle - \textstyle\frac{1}{9}\right) &\leq \sum_{j=1}^{42} p(G_j,G) \left(c_j - \langle Q, A_j \rangle - \textstyle\frac{1}{9}\right)\\
& = i(G) + t(G) - \textstyle\frac{1}{9} - \langle Q, A_G \rangle\leq i(G) + t(G) - \textstyle\frac{1}{9} + \textstyle\frac{C_Q}{n},
\end{align*}
where the first inequality holds since $Q$ is a $\frac{1}{9}$-certificate for the SDP~\eqref{sdp} and thus $c_j - \langle Q, A_j \rangle \geq \frac{1}{9}$ for every $1 \leq j \leq 42$, and the equality holds by Claim~\ref{claim:cMA}.
\end{proof}

\section{The Kernel of \texorpdfstring{$Q$}{Q}} \label{sec::kernel}

In this section we investigate the kernel of $Q_\sigma$ for $\sigma \in \Sigma$, where $\Sigma = \{\emptyset, \bar{E}, E\}$ and $Q$ is a potential $\frac{1}{9}$-certificate for the SDP~\eqref{sdp} which is a block-diagonal matrix with blocks $Q_{\emptyset}, Q_{\bar{E}}, Q_{E}$ of sizes $2 \times 2$, $9 \times 9$, and $9 \times 9$, respectively. 
It will be crucial to find all the necessary kernel vectors (i.e., those which are in the kernel of every $\frac{1}{9}$-certificate $Q$). As will be shown below, our extremal oriented graph ${\cal B}_n$ yields one kernel vector for each $Q_\sigma$.
The oriented graphs ${\cal B}_n^{\varepsilon}$ mentioned above yield two more kernel vectors for $Q_{\bar{E}}$. 

For every type $\sigma$ in $\Sigma$, let $F_{\sigma,1},\ldots,F_{\sigma,m_\sigma}$ be the flags in ${\mathcal F}_{\sigma}$ (in our case $m_{\emptyset}=2$ and $m_E=m_{\bar E}=9$), and for every $\sigma$-rooting $r$, let
\begin{equation}
    \label{eq:v_r}
v_r:=(p(F_{\sigma,1}, r),  \ldots , p(F_{\sigma,m_{\sigma}}, r))^{\rm T}. 
\end{equation}

\begin{claim}\label{claim:ker}
For every $\frac{1}{9}$-certificate $Q$ for the SDP~\eqref{sdp} and for every type $\sigma$ in $\Sigma$, there are positive constants $C_1$ and $C_2$ such that the following is true.
Let $G$ be an $n$-vertex oriented graph, let ${\mathcal R}_{\sigma}$ be the set of all rootings of $G$ over $\sigma$. Then, for every non-empty ${\mathcal R}\subseteq {\mathcal R}_{\sigma}$, it holds that
\begin{equation}\label{eq:ker_R}
\left\lVert Q_{\sigma}\left(\textstyle\frac{1}{|{\mathcal R}|}\sum_{r\in{\mathcal R}}v_r\right)\right\rVert_2 \leq C_1 \sqrt{\textstyle\frac{|{\mathcal R}_{\sigma}|}{|{\mathcal R}|}} \sqrt{t(G)+i(G)-\textstyle\frac{1}{9}+\textstyle\frac{C_2}{n}},
\end{equation}
where $C_1$ may depend only on $Q$ and $\sigma$, and $C_2$ may depend only on $Q$. In particular,
\begin{equation}\label{eq:ker_R_sigma}
\left\lVert Q_{\sigma}\left(\textstyle\frac{1}{|{\mathcal R}_{\sigma}|}\sum_{r\in{\mathcal R}_{\sigma}}v_r\right)\right\rVert_2 \leq C_1 \sqrt{t(G)+i(G)-\textstyle\frac{1}{9}+\textstyle\frac{C_2}{n}}.
\end{equation}
\end{claim}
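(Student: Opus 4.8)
The plan is to bound $\lVert Q_\sigma(\frac{1}{|\mathcal R|}\sum_{r\in\mathcal R}v_r)\rVert_2$ by first relating it to the quantity $\langle Q_\sigma, B_\sigma B_\sigma^{\rm T}\rangle$ (where $B_\sigma$ is the matrix from Lemma~\ref{lem::almostPSD} whose columns are the $v_r$), and then invoking Claim~\ref{claim:cQA} to control that quantity by $t(G)+i(G)-\frac19+O(1/n)$. The key algebraic fact I would use is the following elementary inequality for a PSD matrix $P$ and any vector $w$: since $P \succeq 0$, we may write $P = P^{1/2}P^{1/2}$, so $\lVert Pw\rVert_2^2 = w^{\rm T}P^2 w \le \lambda_{\max}(P)\cdot w^{\rm T}Pw = \lambda_{\max}(P)\cdot\lVert P^{1/2}w\rVert_2^2$. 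Applying this with $P=Q_\sigma$ and $w = \frac{1}{|\mathcal R|}\sum_{r\in\mathcal R}v_r$ reduces the task to bounding $w^{\rm T}Q_\sigma w$, and $\lambda_{\max}(Q_\sigma)$ is a constant depending only on $Q$ and $\sigma$, which we can absorb into $C_1$.

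Next I would handle $w^{\rm T}Q_\sigma w = \frac{1}{|\mathcal R|^2}\sum_{r,r'\in\mathcal R}v_r^{\rm T}Q_\sigma v_{r'}$. The natural move is to pass to the full sum over $\mathcal R_\sigma$ rather than $\mathcal R$: since $Q_\sigma$ is PSD, the Gram-type matrix $(v_r^{\rm T}Q_\sigma v_{r'})_{r,r'\in\mathcal R_\sigma}$ is PSD, so any principal-submatrix average is dominated appropriately — concretely, by Cauchy--Schwarz (or by the fact that $u^{\rm T}Q_\sigma u \ge 0$ for the "error" term $u = \frac{1}{|\mathcal R|}\sum_{r\in\mathcal R}v_r - \frac{|\mathcal R_\sigma|}{|\mathcal R|}\cdot\frac{1}{|\mathcal R_\sigma|}\sum_{r\in\mathcal R_\sigma}v_r$, appropriately scaled) one gets $w^{\rm T}Q_\sigma w \le \frac{|\mathcal R_\sigma|}{|\mathcal R|}\cdot\frac{1}{|\mathcal R_\sigma|^2}\sum_{r,r'\in\mathcal R_\sigma}v_r^{\rm T}Q_\sigma v_{r'}$; the factor $\frac{|\mathcal R_\sigma|}{|\mathcal R|}$ is exactly the source of the $\sqrt{|\mathcal R_\sigma|/|\mathcal R|}$ in the claim. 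Then $\frac{1}{|\mathcal R_\sigma|^2}\sum_{r,r'}v_r^{\rm T}Q_\sigma v_{r'} = \frac{1}{|\mathcal R_\sigma|^2}\langle Q_\sigma, B_\sigma\mathbf 1\mathbf 1^{\rm T}B_\sigma^{\rm T}\rangle$, and I would relate this to $\langle Q_\sigma, B_\sigma B_\sigma^{\rm T}\rangle = |\mathcal R_\sigma|\langle Q_\sigma, (\tilde A_G)_\sigma\rangle$ via Lemma~\ref{lem::almostPSD}(1). Again using PSD-ness of the Gram matrix $G_\sigma := (v_r^{\rm T}Q_\sigma v_{r'})_{r,r'}$, its average entry $\frac{1}{|\mathcal R_\sigma|^2}\mathbf 1^{\rm T}G_\sigma\mathbf 1$ is at most $\frac{1}{|\mathcal R_\sigma|}\mathrm{Tr}(G_\sigma) = \frac{1}{|\mathcal R_\sigma|}\langle Q_\sigma, B_\sigma B_\sigma^{\rm T}\rangle = \langle Q_\sigma, (\tilde A_G)_\sigma\rangle \le \langle Q, \tilde A_G\rangle$ (the last step since the other blocks of $Q$ contribute non-negatively to the diagonal, being PSD, so $\langle Q_\sigma,(\tilde A_G)_\sigma\rangle \le \langle Q,\tilde A_G\rangle$).

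Finally I would close the loop: $\langle Q,\tilde A_G\rangle \le \langle Q,A_G\rangle + \frac{C_Q}{n}$ by Claim~\ref{claim:MA}, and $\langle Q,A_G\rangle = t(G)+i(G)-\frac19 - \sum_i p(G_i,G)(c_i - \langle Q,A_i\rangle - \frac19) \le t(G)+i(G)-\frac19$ by Claim~\ref{claim:cMA} together with the certificate inequality $c_i - \langle Q,A_i\rangle - \frac19 \ge 0$. Stringing these together gives $w^{\rm T}Q_\sigma w \le \frac{|\mathcal R_\sigma|}{|\mathcal R|}\bigl(t(G)+i(G)-\frac19+\frac{C_2}{n}\bigr)$ with $C_2 = C_Q$, and taking square roots (and multiplying by $\sqrt{\lambda_{\max}(Q_\sigma)}$, absorbed into $C_1$) yields~\eqref{eq:ker_R}; the special case~\eqref{eq:ker_R_sigma} is just $\mathcal R = \mathcal R_\sigma$. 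The main obstacle I anticipate is getting the averaging/comparison between the $\mathcal R$-average and the $\mathcal R_\sigma$-average exactly right with the correct constant and power of $|\mathcal R_\sigma|/|\mathcal R|$ — this is where one must be careful to use PSD-ness of the full Gram matrix $G_\sigma$ rather than a crude term-by-term bound, and to track whether the loss sits inside or outside the square root. Everything else is bookkeeping built on the already-established Claims~\ref{claim:cMA}, \ref{claim:MA}, \ref{claim:cQA} and Lemma~\ref{lem::almostPSD}.
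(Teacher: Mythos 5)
Your overall plan coincides with the paper's: reduce $\lVert Q_{\sigma}w\rVert_2$ to the quadratic form $w^{\rm T}Q_{\sigma}w$ (your $\lambda_{\max}$ step is the same as the paper's factorization $Q_{\sigma}=S_{\sigma}^{\rm T}S_{\sigma}$ followed by $\lVert S_{\sigma}^{\rm T}S_{\sigma}w\rVert_2\le\lVert S_{\sigma}\rVert_2\,\lVert S_{\sigma}w\rVert_2$), bound that quadratic form by $\frac{|\mathcal{R}_{\sigma}|}{|\mathcal{R}|}\langle Q,\tilde{A}_G\rangle$ via Lemma~\ref{lem::almostPSD}(1), and close with Claims~\ref{claim:cMA} and~\ref{claim:MA} together with the certificate inequality. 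The endpoints, the source of the factor $\sqrt{|\mathcal{R}_{\sigma}|/|\mathcal{R}|}$, and the constants are all right.

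However, the intermediate inequality you route through is false. For a PSD Gram matrix it is \emph{not} true that the average entry over a principal submatrix is at most $\frac{|\mathcal{R}_{\sigma}|}{|\mathcal{R}|}$ times the average entry over the whole matrix: if $v_{r_0}^{\rm T}Q_{\sigma}v_{r_0}=1$ for a single rooting $r_0$ while every other $v_r$ lies in $\mathrm{Ker}(Q_{\sigma})$, and $\mathcal{R}=\{r_0\}$, then your claimed bound reads $1\le\frac{|\mathcal{R}_{\sigma}|}{1}\cdot\frac{1}{|\mathcal{R}_{\sigma}|^2}=\frac{1}{|\mathcal{R}_{\sigma}|}$, which fails. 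The ``error term'' variant does not rescue it either: expanding $u^{\rm T}Q_{\sigma}u\ge 0$ yields a lower bound on $w^{\rm T}Q_{\sigma}w$ plus cross terms, not an upper bound. The repair is to bypass the all-ones quadratic form over $\mathcal{R}_{\sigma}$ entirely and compare averages of the \emph{diagonal} instead:
\[
w^{\rm T}Q_{\sigma}w=\Bigl\lVert \textstyle\frac{1}{|\mathcal{R}|}\sum_{r\in\mathcal{R}}S_{\sigma}v_r\Bigr\rVert_2^2\le\frac{1}{|\mathcal{R}|}\sum_{r\in\mathcal{R}}v_r^{\rm T}Q_{\sigma}v_r\le\frac{1}{|\mathcal{R}|}\sum_{r\in\mathcal{R}_{\sigma}}v_r^{\rm T}Q_{\sigma}v_r=\frac{|\mathcal{R}_{\sigma}|}{|\mathcal{R}|}\,\langle Q_{\sigma},(\tilde{A}_G)_{\sigma}\rangle,
\]
where the first inequality is the triangle inequality followed by convexity of $x\mapsto x^2$ (applied only within $\mathcal{R}$), and the second merely adds the nonnegative diagonal terms for $r\in\mathcal{R}_{\sigma}\setminus\mathcal{R}$. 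This is precisely the paper's chain. With that single step replaced, the remainder of your argument, including $\langle Q_{\sigma},(\tilde{A}_G)_{\sigma}\rangle\le\langle Q,\tilde{A}_G\rangle\le t(G)+i(G)-\frac{1}{9}+\frac{C_Q}{n}$, goes through as written.
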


\begin{remark} \label{rem::sanityCheck}
There is an implicit assumption in~\eqref{eq:ker_R} that $t(G) + i(G) \geq 1/9 - o(1)$, which seems odd as this is what we are striving to prove. It is thus a good time to emphasize that in this section as well as the next, we are simply proving that if a $\frac{1}{9}$-certificate exists, then it must satisfy certain properties. 
\end{remark}

\begin{proof} [Proof of Claim~\ref{claim:ker}]
Let $G$ be an $n$-vertex oriented graph. Since $Q$ is a $\frac{1}{9}$-certificate for the SDP \eqref{sdp}, and thus
$c_i - \langle Q, A_i \rangle \geq \frac{1}{9}$ for every $1 \leq i \leq 42$, it follows by Claim~\ref{claim:cMA} that 
$$
i(G) + t(G) - \textstyle\frac{1}{9} - \langle Q, A_G \rangle = \sum_{i=1}^{42} p(G_i,G) \left(c_i - \langle Q, A_i \rangle - \textstyle\frac{1}{9}\right) \geq 0.
$$
It follows by Claim~\ref{claim:MA} that there exists a positive constant $C_Q$ such that
$$
\lvert \langle Q, A_G \rangle - \langle Q, \tilde{A}_G \rangle \rvert \leq \textstyle \frac{C_Q}{n}
$$
holds for every $n$-vertex oriented graph $G$. Therefore
\begin{equation}\label{eq:tilde}
\langle Q, \tilde{A}_G \rangle \leq \langle Q, A_G \rangle + \textstyle\frac{C_Q}{n} \leq t(G) + i(G) - \textstyle\frac{1}{9} + \textstyle\frac{C_Q}{n}.
\end{equation}
Since $Q_{\sigma}$ is PSD, it can be expressed as $Q_{\sigma} = S_{\sigma}^{\rm T} S_{\sigma}$ for some matrix $S_{\sigma}$. 
Let $B_{\sigma}$ be the $|{\mathcal F}_{\sigma}| \times|{\mathcal R}_{\sigma}|$ matrix, whose $r$th column is $v_r$.
By the first part of Lemma~\ref{lem::almostPSD}, $\tilde{A}_G$ is the block matrix
$$
\tilde{A}_G=\begin{pmatrix} \frac{1}{|{\mathcal R}_{\emptyset}|}B_{\emptyset}  B_{\emptyset}^{\rm T} & 0 & 0\\
0 &  \frac{1}{|{\mathcal R}_{\bar E}|}B_{\bar E}  B_{\bar E}^{\rm T}& 0\\
0 & 0 & \frac{1}{|{\mathcal R}_E|}B_E  B_E^{\rm T}
\end{pmatrix}.
$$
Hence 
\begin{align} \label{eq:ker}
\textstyle\frac{1}{|{\mathcal R}_{\sigma}|}\sum_{r\in{\mathcal R}_{\sigma}}\lVert S_{\sigma} v_r\rVert_2^2&=\textstyle\frac{1}{|{\mathcal R}_{\sigma}|}{\rm Tr}\left((S_{\sigma}B_{\sigma})^{\rm T}S_{\sigma}B_{\sigma}\right) =\frac{1}{|{\mathcal R}_{\sigma}|}{\rm Tr}\left(B_{\sigma}^{\rm T}S_{\sigma}^{\rm T}S_{\sigma}B_{\sigma}\right) \nonumber \\
&={\rm Tr}\left(B_{\sigma}^{\rm T}Q_{\sigma}\textstyle\frac{1}{|{\mathcal R}_{\sigma}|}B_{\sigma}\right)={\rm Tr}\left(Q_{\sigma}\textstyle\frac{1}{|{\mathcal R}_{\sigma}|}B_{\sigma} B_{\sigma}^{\rm T}\right) \nonumber \\
&\leq{\rm Tr}\left(Q_{\emptyset}\textstyle\frac{1}{|{\mathcal R}_{\emptyset}|}B_{\emptyset} B_{\emptyset}^{\rm T}\right)+{\rm Tr}\left(Q_{\bar E}\textstyle\frac{1}{|{\mathcal R}_{\bar E}|}B_{\bar E} B_{\bar E}^{\rm T}\right)+{\rm Tr}\left(Q_E\textstyle\frac{1}{|{\mathcal R}_E|}B_E B_E^{\rm T}\right)\nonumber\\
&={\rm Tr}\left(Q \tilde{A}_G\right)=\langle Q,\tilde{A}_G\rangle\leq t(G)+i(G)-\textstyle\frac{1}{9}+\textstyle\frac{C_Q}{n},
\end{align}
where the fourth equality holds by the cyclic property of the trace operator, the first inequality holds by Lemma~\ref{lem:scalar_product} since both $Q_{\sigma}$ and $\textstyle\frac{1}{|{\mathcal R}_{\sigma}|} B_{\sigma} B_{\sigma}^{\rm T}$ are PSD matrices, and the last inequality holds by~\eqref{eq:tilde}. 

Therefore, for every ${\mathcal R}\subseteq {\mathcal R}_{\sigma}$, we have
\begin{align} \label{eq::Snorm}
\left\lVert S_{\sigma}\left(\textstyle\frac{1}{|{\mathcal R}|}\sum_{r\in{\mathcal R}}v_r\right)\right\rVert_2^2&=\left\lVert \textstyle\frac{1}{|{\mathcal R}|}\sum_{r\in{\mathcal R}} S_{\sigma} v_r\right\rVert_2^2\leq \left(\textstyle\frac{1}{|{\mathcal R}|}\sum_{r\in{\mathcal R}}\lVert S_{\sigma} v_r\rVert_2\right)^2\leq \textstyle\frac{1}{|{\mathcal R}|}\sum_{r\in{\mathcal R}}\lVert S_{\sigma} v_r\rVert_2^2 \nonumber \\
&\leq\textstyle\frac{1}{|{\mathcal R}|}\sum_{r\in{\mathcal R}_{\sigma}}\lVert S_{\sigma} v_r\rVert_2^2
\leq \textstyle\frac{|{\mathcal R}_{\sigma}|}{|{\mathcal R}|}\left(t(G)+i(G)-\textstyle\frac{1}{9}+\textstyle\frac{C_Q}{n}\right)
\end{align}
where the first inequality is the triangle inequality, the second inequality holds by the convexity of the function $x \mapsto x^2$, and the last inequality holds by~\eqref{eq:ker}. Hence
\begin{align*}
\left\lVert Q_{\sigma}\left(\textstyle\frac{1}{|{\mathcal R}|}\sum_{r\in{\mathcal R}}v_r\right)\right\rVert_2&=\left\lVert S_{\sigma}^{\rm T}S_{\sigma}\left(\textstyle\frac{1}{|{\mathcal R}|}\sum_{r\in{\mathcal R}}v_r\right)\right\rVert_2\leq \lVert S_{\sigma}^{\rm T}\rVert_2\,\left\lVert S_{\sigma}\left(\textstyle\frac{1}{|{\mathcal R}|}\sum_{r\in{\mathcal R}}v_r\right)\right\rVert_2\\
&\leq \lVert S_{\sigma}^{\rm T}\rVert_2\sqrt{\textstyle\frac{|{\mathcal R}_{\sigma}|}{|{\mathcal R}|}}\sqrt{t(G)+i(G)-\textstyle\frac{1}{9}+\textstyle\frac{C_Q}{n}},
\end{align*}
where the first inequality is a simple corollary of the Cauchy-Schwarts inequality and the last inequality holds by~\eqref{eq::Snorm}.
\end{proof}

\begin{lemma}\label{lem:ker} 
Let $Q$ be a $\frac{1}{9}$-certificate for the SDP~\eqref{sdp}. Then, with coordinates ordered as in Figures~\ref{fig:flags_empty}, \ref{fig:flags_edge} and~\ref{fig:flags_nonedge}, respectively, it holds that
\begin{align*}
(1,2)^{\rm T} &\in \emph{Ker}(Q_{\emptyset}),\\
(0,1,0,0,1,0,0,1,0)^{\rm T} &\in \emph{Ker}(Q_E),\\
(1,0,0,0,0,1,0,0,1)^{\rm T} &\in \emph{Ker}(Q_{\bar E}),\\
(0,1,0,0,1,0,0,1,0)^{\rm T} &\in \emph{Ker}(Q_{\bar E}),\\
(0,0,1,1,0,0,1,0,0)^{\rm T} &\in \emph{Ker}(Q_{\bar E}).
\end{align*}
\end{lemma}

\begin{proof}
Applying Claim~\ref{claim:ker} to the oriented graph ${\cal B}_{3n}$, it follows by~\eqref{eq:ker_R_sigma} and Observation~\ref{obs:blowup} that
\begin{align*}
\left\lVert Q_{\emptyset}\left(\textstyle\frac{1}{|{\mathcal R}_{\emptyset}|}\sum_{r\in{\mathcal R}_{\emptyset}}v_r\right)\right\rVert_2&\leq \textstyle\frac{C_{Q,\emptyset}}{\sqrt{n}},\\
\left\lVert Q_{E}\left(\textstyle\frac{1}{|{\mathcal R}_{E}|}\sum_{r\in{\mathcal R}_{E}}v_r\right)\right\rVert_2&\leq \textstyle\frac{C_{Q,E}}{\sqrt{n}},\\
\left\lVert Q_{\bar E}\left(\textstyle\frac{1}{|{\mathcal R}_{\bar E}|}\sum_{r\in{\mathcal R}_{\bar E}}v_r\right)\right\rVert_2&\leq \textstyle\frac{C_{Q,\bar E}}{\sqrt{n}}.
\end{align*}
Therefore, since 
\begin{subequations} \label{3eq:lim_v_r}
\begin{align} \label{3eq:lim_v_ra}
\textstyle\frac{1}{|{\mathcal R}_{\emptyset}|}\sum_{r\in{\mathcal R}_{\emptyset}}v_r=\left(\textstyle\frac{n-1}{3n-1},\textstyle\frac{2n}{3n-1}\right)^{\rm T}&\xrightarrow[n\to\infty]{}\textstyle\frac{1}{3}\left(1,2\right)^{\rm T},\\
\textstyle\frac{1}{|{\mathcal R}_{E}|}\sum_{r\in{\mathcal R}_{E}}v_r=\left(0,\textstyle\frac{n-1}{3n-2},0,0,\textstyle\frac{n-1}{3n-2},0,0,\textstyle\frac{n}{3n-2},0\right)^{\rm T}&\xrightarrow[n\to\infty]{}\textstyle\frac{1}{3}\left(0,1,0,0,1,0,0,1,0\right)^{\rm T},\\
\textstyle\frac{1}{|{\mathcal R}_{\bar E}|}\sum_{r\in{\mathcal R}_{\bar E}}v_r=\left(\textstyle\frac{n-2}{3n-2},0,0,0,0,\textstyle\frac{n}{3n-2},0,0,\textstyle\frac{n}{3n-2}\right)^{\rm T}&\xrightarrow[n\to\infty]{}\textstyle\frac{1}{3}\left(1,0,0,0,0,1,0,0,1\right)^{\rm T},
\end{align}
\end{subequations}
it follows that
\begin{equation*}
\lVert Q_{\emptyset}(1,2)^{\rm T}\rVert_2=
\lVert Q_{E}(0,1,0,0,1,0,0,1,0)^{\rm T}\rVert_2=
\lVert Q_{\bar E}(1,0,0,0,0,1,0,0,1)^{\rm T}\rVert_2=0,
\end{equation*}
and thus
\begin{align*}
(1,2)^{\rm T} &\in \text{Ker}(Q_{\emptyset}),\\
(0,1,0,0,1,0,0,1,0)^{\rm T} &\in \text{Ker}(Q_E),\\
(1,0,0,0,0,1,0,0,1)^{\rm T} &\in \text{Ker}(Q_{\bar E}).
\end{align*}

Next, apply Claim~\ref{claim:ker} to the oriented graph $G = {\mathcal B}_{3n}^{\varepsilon}$ for some arbitrary positive integer $n$ and $0 < \varepsilon < 1$.
Let ${\mathcal R} \subseteq {\mathcal R}_{\bar E}$ be the set of rootings over deleted edges which agree with their direction, that is, edges $\vec{xy} \in E({\mathcal B}_{3n}) \setminus E({\mathcal B}_{3n}^{\varepsilon})$, where $x$ is labelled 1 and $y$ is labelled 2. 
For every three distinct vertices $x,y,z\in V$ and every $F\in{\mathcal F}_{\bar E}$, let ${\mathcal A}^F_{x,y,z}$ be the event that the subgraph of $E({\mathcal B}_{3n}^{\varepsilon})$ induced on the vertices $x,y,z$, where $x$ is labelled 1 and $y$ is labelled 2, is isomorphic to $F$ (in particular, the edge $\vec{xy}$ was deleted).  
Note that for every $F\in{\mathcal F}_{\bar E}$,
\begin{align*}
\mathbb{E} &\left(\sum_{r\in{\mathcal R}}p(F,r)\right)\\
&=\sum_{\substack{0\leq i\leq 2,\\x\in V_i,\, y\in V_{i+1}}}\textstyle\frac{1}{3n-2}\left(\sum_{z\in V_{i-1}}\Pr({\mathcal A}^F_{x,y,z})+\sum_{x\neq z\in V_i}\Pr({\mathcal A}^F_{x,y,z})+\sum_{y\neq z\in V_{i+1}}\Pr({\mathcal A}^F_{x,y,z})\right),
\end{align*}
where the expectation and probabilities are taken with respect to the random deletion of edges which results in ${\mathcal B}_{3n}^{\varepsilon}$.
It follows that
\begin{align*}
\textstyle\frac{1}{3n^2} \mathbb{E} \left(\sum_{r\in{\mathcal R}} v_r \right) =& \textstyle\frac{n}{3n-2} \left(\varepsilon^3, 0, \varepsilon^2 (1-\varepsilon), \varepsilon^2 (1-\varepsilon), 0, 0, 0, \varepsilon(1-\varepsilon)^2, 0\right)^{\rm T}\\
&+ \textstyle\frac{n-1}{3n-2} \left(2\varepsilon^2, \varepsilon (1-\varepsilon), 0, 0, \varepsilon(1-\varepsilon), 0, 0, 0, 0\right)^{\rm T}.
\end{align*}
Therefore
\begin{equation}\label{eq:lim_epsilon}
\textstyle\frac{1}{n^2\varepsilon} \mathbb{E} \left(\sum_{r\in{\mathcal R}} v_r \right) \xrightarrow[n\to\infty]{}\left(\varepsilon^2+2\varepsilon,1-\varepsilon,\varepsilon(1-\varepsilon),\varepsilon(1-\varepsilon),1-\varepsilon,0,0,(1-\varepsilon)^2,0\right)^{\rm T}.
\end{equation}
Now, by~\eqref{eq:ker_R} we have
\begin{align*} 
\left\lVert Q_{\bar E}\left(\sum_{r \in{\mathcal R}} v_r\right)\right\rVert_2 &\leq C_1 \sqrt{|{\mathcal R}_{\bar E}|} \sqrt{|{\mathcal R}|} \sqrt{t({\mathcal B}_{3n}^\varepsilon) + i({\mathcal B}_{3n}^\varepsilon) - \textstyle\frac{1}{9} + \textstyle\frac{C_2}{n}} \\
&= C_1 \sqrt{|{\mathcal R}_{\bar E}|} \sqrt{|{\mathcal R}|} \sqrt{i({\mathcal B}_{3n}^\varepsilon) - \textstyle\frac{1}{9} + \textstyle\frac{C_2}{n}} \leq 3 C_1 n \sqrt{|{\mathcal R}|} \sqrt{i({\mathcal B}_{3n}^\varepsilon) - \textstyle\frac{1}{9} + \textstyle\frac{C_2}{n}}, 
\end{align*}
where $C_1$ may depend only on $Q$ and ${\bar E}$, and $C_2$ may depend only on $Q$. Hence 
\begin{align*}
\left\lVert Q_{\bar E} \left(\textstyle\frac{1}{n^2\varepsilon} \mathbb{E} \left(\sum_{r \in {\mathcal R}} v_r \right)\right)\right\rVert_2 &= \textstyle\frac{1}{n^2\varepsilon} \left\lVert\mathbb{E} \left(Q_{\bar E} \left(\sum_{r \in {\mathcal R}} v_r\right)\right)\right\rVert_2 \leq \textstyle\frac{1}{n^2\varepsilon} \mathbb{E}\left\lVert Q_{\bar E} \left(\sum_{r \in {\mathcal R}} v_r\right)\right\rVert_2\\
&\leq \textstyle\frac{3 C_1}{n\varepsilon} \mathbb{E}\left(\sqrt{|{\mathcal R}|} \sqrt{i({\mathcal B}_{3n}^\varepsilon) - \textstyle\frac{1}{9} + \textstyle\frac{C_2}{n}}\right) \leq \textstyle\frac{3 C_1}{n \varepsilon} \sqrt{\mathbb{E}|{\mathcal R}|} \sqrt{\mathbb{E}\left(i({\mathcal B}_{3n}^\varepsilon) - \textstyle\frac{1}{9} + \textstyle\frac{C_2}{n}\right)}\\
 &= \textstyle\frac{3 C_1}{n \varepsilon} \sqrt{3n^2 \varepsilon} \sqrt{\textstyle\frac{1}{\binom{3n}{3}} \left(3\binom{n}{3} + 3 \binom{n}{2} 2n \varepsilon^2 + \varepsilon^3 n^3\right) - \textstyle\frac{1}{9} + \textstyle\frac{C_2}{n}} \\ &\xrightarrow[n\to \infty]{} 3 \sqrt{2} C_1 \sqrt{\varepsilon} \sqrt{1 + \varepsilon/3},
\end{align*}
where the first inequality holds by Jensen's inequality and the convexity of the Euclidean norm and the third inequality holds by the Cauchy-Schwartz inequality. 
Therefore, for every $0 < \varepsilon < 1$, it follows by~\eqref{eq:lim_epsilon} that
\begin{multline*}
    \lVert Q_{\bar E}\left(\varepsilon^2+2\varepsilon,1-\varepsilon,\varepsilon(1-\varepsilon),\varepsilon(1-\varepsilon),1-\varepsilon,0,0,(1-\varepsilon)^2,0\right)^{\rm T}\rVert_2 \\
    =\lim_{n\to\infty}\lVert Q_{\bar E}\left(\textstyle\frac{1}{n^2\varepsilon} \mathbb{E} \left(\sum_{r\in{\mathcal R}} v_r \right)\right)\rVert_2\leq 3 \sqrt{2} C_1 \sqrt{\varepsilon} \sqrt{1 + \varepsilon/3}.
\end{multline*}
We conclude that
$$
\lVert Q_{\bar E}(0,1,0,0,1,0,0,1,0)^{\rm T}\rVert_2 = 0
$$
and thus
$$
(0,1,0,0,1,0,0,1,0)^{\rm T} \in \text{Ker}(Q_{\bar E}).
$$
An analogous argument, this time considering all ${\bar E}$-rootings over deleted edges in the opposite direction shows that 
\begin{equation*}
(0,0,1,1,0,0,1,0,0)^{\rm T} \in \text{Ker}(Q_{\bar E}).
\qedhere\end{equation*}
\end{proof}

\begin{remark}\label{rem:big:picture}
The, widely used, method by which we found the first three kernel vectors, and to some extent also the other two, is general to any flag algebra application. The practical flag algebra guideline is to check all the near zero eigenvalues of an approximate computer generated certificate. Before trying to round it, one verifies that all these eigenvalues match the expected eigenvalues from known extremal constructions. Any unexplained near zero eigenvalue may hint at the existence of other extremal constructions -- either a completely different graph, or a variation on an existing one, as is the case here. Once we have all the needed extremal constructions, we can accomodate for all the sharp graph equations (see the following section). Thus, it is not enough to simply force eigenvectors corresponding to near-zero eigenvalues to be in the kernel; one must find the constructions that explain them.
\end{remark}

\section{Sharp graphs} \label{sec::sharp}
Let
\begin{align*}
{\mathcal W}_{\,\emptyset} = \{M_{\,\emptyset} \in M_{2\times 2}({\mathbb R}) :&  M_{\emptyset}^{\rm T} = M_{\emptyset},\,(1,2)^{\rm T} \in \text{Ker}(M_{\emptyset})\},\\
{\mathcal W}_E = \{M_E \in M_{9 \times 9}({\mathbb R}) :& M_E^{\rm T} = M_E,\,(0,1,0,0,1,0,0,1,0)^{\rm T} \in \text{Ker}(M_E)\},\\
{\mathcal W}_{\bar{E}} = \{M_{\bar{E}} \in M_{9 \times 9}({\mathbb R}) :&  M_{\bar{E}}^{\rm T} = M_{\bar{E}},\,(1,0,0,0,0,1,0,0,1)^{\rm T} \in \text{Ker}(M_{\bar{E}}),\\
& (0,1,0,0,1,0,0,1,0)^{\rm T} \in \text{Ker}(M_{\bar{E}}),\,(0,0,1,1,0,0,1,0,0)^{\rm T} \in \text{Ker}(M_{\bar{E}})\}.
\end{align*}
Lemma~\ref{lem:ker} may be rephrased in the following way. Every $\frac{1}{9}$-certificate of the SDP~\eqref{sdp} which is a block-diagonal matrix with blocks of sizes $2 \times 2$, $9 \times 9$, and $9 \times 9$ is a member of the linear subspace
$$
{\mathcal W} := \left\{
\begin{pmatrix}
M_{\emptyset} & 0 & 0\\
0 & M_{\bar{E}} & 0\\
0 & 0 & M_E
\end{pmatrix}
: M_{\emptyset}\in {\mathcal W}_{\emptyset}, \, M_{\bar{E}}\in {\mathcal W}_{\bar{E}}, \, M_{E}\in {\mathcal W}_{E}\right\}
$$
of the space of $20 \times 20$ symmetric matrices. In this section we will find an affine subspace of ${\mathcal W}$, of smaller dimension, which still  contains all $\frac{1}{9}$-certificates of the SDP~\eqref{sdp} with the aforementioned block structure.

For every $1 \leq i \leq 42$, we say that the $4$-vertex oriented graph $G_i$ is {\em sharp} if $\mathbb{E}(p(G_i, {\mathcal B}_n^{\varepsilon}) ) = \Omega(\varepsilon)$ as $\varepsilon \to 0^+$, where the expectation is taken with respect to the random deletion of edges which results in ${\mathcal B}_n^{\varepsilon}$. 
It is not hard to check that there are eleven sharp graphs. Five of which, namely, $G_{1}$, $G_{7}$, $G_{10}$, $G_{27}$, and $G_{32}$ are induced subgraphs of ${\mathcal B}_n$, and six additional graphs, namely, $G_{3}$, $G_{5}$, $G_{15}$, $G_{19}$, $G_{23}$, and $G_{25}$ appear abundantly as induced subgraphs of ${\mathcal B}_n^{\varepsilon}$. 

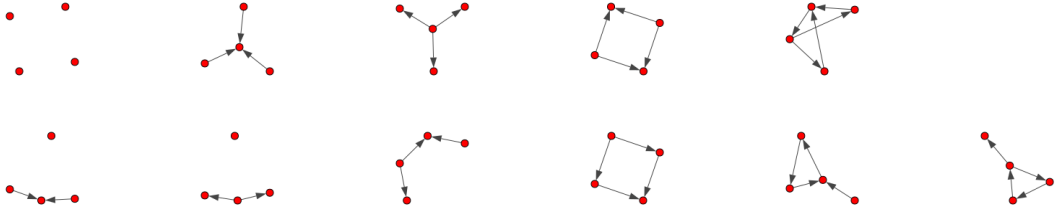
\begin{figure}
\centering
\begin{tikzpicture}

\Vertex[x=-4.75, y=9.6, size=0.2]{A11};
\Vertex[x=-4.75, y=8.9, size=0.2]{B11};
\Vertex[x=-5.35, y=8.6, size=0.2]{C11};
\Vertex[x=-4.15,y=8.6, size=0.2]{D11};
\Text[x=-4.75, y=8.1]{$G_{1}$}

\Vertex[x=-2.25, y=9.6, size=0.2]{A21};
\Vertex[x=-2.25, y=8.9, size=0.2]{B21};
\Vertex[x=-2.85, y=8.6, size=0.2]{C21};
\Vertex[x=-1.65, y=8.6, size=0.2]{D21};
\Text[x=-2.25, y=8.1]{$G_{7}$}
\Edge[Direct](A21)(B21);
\Edge[Direct](C21)(B21);
\Edge[Direct](D21)(B21);

\Vertex[x=0.25, y=9.6, size=0.2]{A24};
\Vertex[x=0.25, y=8.9, size=0.2]{B24};
\Vertex[x=-0.35, y=8.6, size=0.2]{C24};
\Vertex[x=0.85, y=8.6, size=0.2]{D24};
\Text[x=0.25,  y=8.1]{$G_{10}$}
\Edge[Direct](B24)(A24);
\Edge[Direct](B24)(C24);
\Edge[Direct](B24)(D24);

\Vertex[x=2.75, y=9.6, size=0.2]{A53};
\Vertex[x=2.75, y=8.9, size=0.2]{B53};
\Vertex[x=2.15, y=8.6, size=0.2]{C53};
\Vertex[x=3.35, y=8.6, size=0.2]{D53};
\Text[x=2.75, y=8.1]{$G_{27}$}
\Edge[Direct](C53)(A53);
\Edge[Direct](C53)(B53);
\Edge[Direct](D53)(B53);
\Edge[Direct](D53)(A53);

\Vertex[x=5.25, y=9.6, size=0.2]{A62};
\Vertex[x=5.25, y=8.9, size=0.2]{B62};
\Vertex[x=4.65, y=8.6, size=0.2]{C62};
\Vertex[x=5.85, y=8.6, size=0.2]{D62};
\Text[x=5.25, y=8.1]{$G_{32}$}
\Edge[Direct](A62)(C62);
\Edge[Direct](A62)(D62);
\Edge[Direct](B62)(A62);
\Edge[Direct](C62)(B62);
\Edge[Direct](D62)(B62);

\Vertex[x=-6,y=6.9, size=0.2]{A13};
\Vertex[x=-6,y=6.2, size=0.2]{B13};
\Vertex[x=-6.6,y=5.9, size=0.2]{C13};
\Vertex[x=-5.4,y=5.9, size=0.2]{D13};
\Text[x=-6,y=5.4]{$G_{3}$}
\Edge[Direct](A13)(D13);
\Edge[Direct](C13)(D13);

\Vertex[x=-3.5, y=6.9, size=0.2]{A15};
\Vertex[x=-3.5, y=6.2, size=0.2]{B15};
\Vertex[x=-4.1, y=5.9, size=0.2]{C15};
\Vertex[x=-2.9, y=5.9, size=0.2]{D15};
\Text[x=-3.5, y=5.4]{$G_{5}$}
\Edge[Direct](C15)(A15);
\Edge[Direct](C15)(D15);

\Vertex[x=-1,y=6.9, size=0.2]{A33};
\Vertex[x=-1,y=6.2, size=0.2]{B33};
\Vertex[x=-1.6,y=5.9, size=0.2]{C33};
\Vertex[x=-0.4,y=5.9, size=0.2]{D33};
\Text[x=-1,y=5.4]{$G_{15}$}
\Edge[Direct](A33)(D33);
\Edge[Direct](C33)(D33);
\Edge[Direct](A33)(B33);

\Vertex[x=1.5, y=6.9, size=0.2]{A41};
\Vertex[x=1.5, y=6.2, size=0.2]{B41};
\Vertex[x=0.9, y=5.9, size=0.2]{C41};
\Vertex[x=2.1, y=5.9, size=0.2]{D41};
\Text[x=1.5, y=5.4]{$G_{19}$}
\Edge[Direct](A41)(B41);
\Edge[Direct](C41)(B41);
\Edge[Direct](B41)(D41);
\Edge[Direct](D41)(A41);

\Vertex[x=4,y=6.9, size=0.2]{A45};
\Vertex[x=4,y=6.2, size=0.2]{B45};
\Vertex[x=3.4,y=5.9, size=0.2]{C45};
\Vertex[x=4.6,y=5.9, size=0.2]{D45};
\Text[x=4,y=5.4]{$G_{23}$}
\Edge[Direct](A45)(B45);
\Edge[Direct](B45)(C45);
\Edge[Direct](B45)(D45);
\Edge[Direct](C45)(A45);

\Vertex[x=6.5, y=6.9, size=0.2]{A51};
\Vertex[x=6.5, y=6.2, size=0.2]{B51};
\Vertex[x=5.9, y=5.9, size=0.2]{C51};
\Vertex[x=7.1, y=5.9, size=0.2]{D51};
\Text[x=6.5, y=5.4]{$G_{25}$}
\Edge[Direct](C51)(A51);
\Edge[Direct](B51)(C51);
\Edge[Direct](B51)(D51);
\Edge[Direct](D51)(A51);

\end{tikzpicture}
\caption{The sharp graphs. The graphs in the first row are induced subgraphs of $\mathcal{B}_n$. The density in $\mathcal{B}_n^{\varepsilon}$ of the graphs in the second row is linear in $\varepsilon$.}
\end{figure}

Our interest in sharp graphs is due to the following lemma which asserts that every sharp graph imposes a linear equation which the entries of any $\frac{1}{9}$-certificate must satisfy.
\begin{lemma}
Let $Q$ be a $\frac{1}{9}$-certificate for the SDP~\eqref{sdp}. If $G_i$ is sharp, then 
$$
c_i - \langle Q, A_i \rangle =\textstyle\frac{1}{9}.
$$
\end{lemma}

\begin{proof}
For every $1 \leq i \leq 42$, every positive integer $n$, and every $\varepsilon > 0$, it follows by Claim~\ref{claim:cQA} that
$$
\left(c_i - \langle Q, A_{G_i} \rangle-\textstyle\frac{1}{9}\right) p(G_i,{\mathcal B}_n^{\varepsilon})\leq t({\mathcal B}_n^{\varepsilon})+i({\mathcal B}_n^{\varepsilon})-\textstyle\frac{1}{9}+O\left(\frac{1}{n}\right)=i({\mathcal B}_n^{\varepsilon})-\textstyle\frac{1}{9}+O\left(\textstyle\frac{1}{n}\right).
$$
Hence
\begin{align*}
    \left(c_i - \langle Q, A_{G_i} \rangle - \textstyle\frac{1}{9}\right) \mathbb{E}(p(G_i,{\mathcal B}_n^{\varepsilon})) &\leq \mathbb{E}(i({\mathcal B}_n^{\varepsilon})) - \textstyle\frac{1}{9} + O\left(\textstyle\frac{1}{n}\right)\\
    &= i({\mathcal B}_n) + O(\varepsilon^2) - \textstyle\frac{1}{9} + O\left(\textstyle\frac{1}{n}\right) \leq
O(\varepsilon^2) + O\left(\textstyle\frac{1}{n}\right),
\end{align*}
where the last inequality holds by Observation~\ref{obs:blowup}. Therefore, if $G_i$ is sharp, then for every positive integer $n$ and every $\varepsilon > 0$, it holds that
$$
0 \leq c_i - \langle Q, A_{G_i} \rangle - \textstyle\frac{1}{9} \leq O(\varepsilon) + O\left(\textstyle\frac{1}{n\varepsilon}\right)
$$
and thus
\begin{equation*}
c_i - \langle Q, A_i \rangle - \textstyle\frac{1}{9} = 0.
\qedhere\end{equation*}
\end{proof}

Therefore, in addition to the linear constraints that were already found in the previous section, we have found $11$ linear constraints that every $\frac{1}{9}$-certificate of the SDP~\eqref{sdp} must satisfy. Namely, let $I_{\text{sharp}} = \{1,3,5,7,10,15,19,23,25,27,32\}$. Then every $\frac{1}{9}$-certificate of the SDP~\eqref{sdp} which is a block-diagonal matrix with blocks of sizes $2 \times 2$, $9 \times 9$, and $9 \times 9$ is a member of the affine subspace
$$
\tilde{\mathcal W} := \left\{M \in {\mathcal W} : c_i - \langle A_i, M \rangle = \textstyle\frac{1}{9} \textrm{ for every } i \in I_{\text{sharp}}\right\}
$$
of the linear space ${\mathcal W}$. We note that $\dim{\mathcal W} - \dim\tilde{\mathcal W}$ is not $11$, as one might hope, but rather smaller, as is stated in the following lemma.
\begin{lemma}
Let $I_{\text{induced}} = \{1,7,10,27,32\}$. Then, for every $M \in {\mathcal W}$, it holds that
$$
\sum_{i \in I_{\text{induced}}} \lambda_i \left(c_i - \langle A_i, M \rangle - \textstyle\frac{1}{9}\right) = 0,
$$
where $\lambda_i = \lim_{n \to \infty} p(G_i, {\mathcal B}_{3n})$ for every $i \in I_{\text{induced}}$.
\end{lemma}

\begin{proof}
Let
$$
M=\begin{pmatrix}
M_{\emptyset} & 0 & 0\\
0 & M_{\bar{E}} & 0\\
0 & 0 & M_E
\end{pmatrix}
$$
be a matrix in ${\mathcal W}$ and let $n$ be a positive integer. Then
\begin{align} \label{eq:tildeAM}
\sum_{i \in I_{\text{induced}}} p(G_i,{\mathcal B}_{3n})\left(c_i-\langle A_i,M\rangle-\textstyle\frac{1}{9}\right)&=\sum_{i=1}^{42}p(G_i,{\mathcal B}_{3n}) \left(c_i-\langle A_i,M\rangle-\textstyle\frac{1}{9}\right)\nonumber\\
&=i({\mathcal B}_{3n})+t({\mathcal B}_{3n})-\langle A_{{\mathcal B}_{3n}}, M\rangle-\textstyle\frac{1}{9}\nonumber\\
&=\left(\textstyle\frac{1}{9} + O\left(\textstyle\frac{1}{n}\right)\right) - \left(\langle \tilde{A}_{{\mathcal B}_{3n}}, M \rangle + O\left(\textstyle\frac{1}{n}\right)\right) - \textstyle\frac{1}{9} \nonumber\\
&= -\langle \tilde{A}_{{\mathcal B}_{3n}}, M\rangle + O\left(\textstyle\frac{1}{n}\right),
\end{align}
where the second equality holds by Claim~\ref{claim:cMA} and the third equality holds by Claim~\ref{claim:MA}.

Fix an arbitrary type $\sigma \in \Sigma$. Let 
$$
v_{\sigma,3n} = \frac{1}{|{\mathcal R}_{\sigma}|} \sum_{r \in {\mathcal R}_{\sigma}} v_r
$$
where ${\mathcal R}_{\sigma}$ is the set of all rootings of ${\mathcal B}_{3n}$ over $\sigma$ and $v_r$ is as in \eqref{eq:v_r}. 
Note that $p(F,r_1) = p(F,r_2)$ holds for every flag $F$ in ${\mathcal F}_{\sigma}$ and every two rootings $r_1$ and $r_2$ in ${\mathcal R}_{\sigma}$. Therefore, it follows by the first part of Lemma~\ref{lem::almostPSD} that
$$
\tilde{A}_{{\mathcal B}_{3n}} = 
\begin{pmatrix} 
v_{\emptyset,3n} {v_{\emptyset,3n}}^{\rm T} & 0 & 0\\
0 & v_{\bar{E},3n} {v_{\bar{E},3n}}^{\rm T} & 0\\
0 & 0 & v_{E,3n} {v_{E,3n}}^{\rm T}
\end{pmatrix},
$$
Hence
\begin{align} \label{eq::dotProductZero}
\langle \tilde{A}_{{\mathcal B}_{3n}}, M \rangle &= {\rm Tr}\left( \tilde{A}_{{\mathcal B}_{3n}} M\right) = \sum_{\sigma \in \Sigma} {\rm Tr} \left(v_{\sigma,3n} {v_{\sigma,3n}}^{\rm T} M_{\sigma}\right) \nonumber \\
&=\sum_{\sigma \in \Sigma}{\rm Tr} \left({v_{\sigma,3n}}^{\rm T} M_{\sigma} v_{\sigma,3n}\right) = \sum_{\sigma \in \Sigma} {v_{\sigma,3n}}^{\rm T} M_{\sigma} v_{\sigma,3n} \xrightarrow[n\to \infty]{} 0,
\end{align}
where the third equality holds by the cyclic property of the trace operator, and the last sum converges to zero as, by \eqref{3eq:lim_v_r}, the vectors $v_{\sigma,3n}$ approach kernel vectors of $M_{\sigma}$ as $n$ tends to infinity. 
Hence
\begin{align*}
\sum_{i \in I_{\text{induced}}} \lambda_i \left(c_i - \langle A_i, M \rangle - \textstyle\frac{1}{9}\right) &= \lim_{n \to \infty} \sum_{i \in I_{\text{induced}}} p(G_i,{\mathcal B}_{3n}) \left(c_i - \langle A_i, M \rangle - \textstyle\frac{1}{9}\right)\\
&= \lim_{n \to \infty} \left(- \langle \tilde{A}_{{\mathcal B}_{3n}}, M \rangle + O\left(\textstyle\frac{1}{n}\right)\right) = 0,
\end{align*}
where the second equality holds by~\eqref{eq:tildeAM} and the third equality holds by~\eqref{eq::dotProductZero}.
\end{proof}

A similar, but somewhat more involved, argument shows that for every $M \in {\mathcal W}$, it also holds that
$$
\sum_{i \in I_{\text{sharp}} \setminus I_{\text{induced}}} \lambda_i \left(c_i - \langle A_i, M \rangle - \textstyle\frac{1}{9}\right) = 0,
$$
where $\lambda_i = \lim_{\varepsilon \to 0^+} \frac{1}{\varepsilon} \lim_{n \to \infty} \mathbb{E}\left(p(G_i,{\mathcal B}_{3n}^{\varepsilon})\right)$ for every $i \in I_{\text{sharp}} \setminus I_{\text{induced}}$.

The above calculations suggest that perhaps $\dim\tilde{\mathcal W} = \dim{\mathcal W} - 9$. Straightforward computer-aided calculations reveal that this is indeed the case.

\section{Projection} \label{sec::projection}

Recall that our general plan is to use computer software to find a $\left(\frac{1}{9}-\delta\right)$-certificate for some small $\delta > 0$ and then round it to a $\frac{1}{9}$-certificate. By now, we are aware of two conditions that any $\frac{1}{9}$-certificate must satisfy, namely, its kernel must include the five vectors listed in Lemma~\ref{lem:ker}, and it must satisfy the eleven sharp graphs equations. In this section, we use the first of these two conditions to reduce the order of the certificate matrix we seek. This is done via a projection to the orthogonal complement of the linear space spanned by the five kernel vectors from Lemma~\ref{lem:ker}. 

The projection will reduce the order of the matrices $A_i$ from $20$ to $15$. In fact, the main benefit of this projection is that it will allow us to find a {\em strictly} positive definite certificate for the projected problem; such a matrix may be slightly perturbed without the risk of generating negative eigenvalues. 

For every $\sigma \in \Sigma$, let $R_\sigma$ be a matrix whose columns form an orthonormal basis of the space perpendicular to the kernel vectors of $Q_\sigma$ that we found in Section~\ref{sec::kernel}. In particular, $R_\sigma^{\rm T} R_\sigma$ is an identity matrix. Observe that $R_{\emptyset}$ is a $2 \times 1$ matrix, $R_{\bar E}$ is a $9 \times 6$ matrix and $R_E$ is a $9 \times 8$ matrix. Let
\begin{equation*}
R = \begin{pmatrix}R_{\emptyset}& 0_{2\times 6}&0_{2\times 8}\\0_{9\times 1}&R_{\bar E}&0_{9\times 8}\\0_{9\times 1}&0_{9\times 6}&R_E
\end{pmatrix}
\end{equation*}
be a $20 \times 15$ block matrix, where $0_{k \times \ell}$ denotes the $k \times \ell$ all-zeros matrix. For every $1\leq i\leq 42$, let $\bar{A}_i = R^{\rm T} A_i R$.

\begin{lemma} \label{lem::projectedSDP}
Suppose $\bar{Q}$ is an $\alpha$-certificate for the projected SDP:
\begin{eqnarray}\label{proj_sdp} 
\begin{aligned}
&\text{Variables: } p_1,\ldots,p_{42} \\
&\text{Goal: } minimize \sum_{i=1}^{42} p_i c_i \\
&\text{Constraints: } \\
& p_1,\ldots,p_{42} \geq 0 \\
& \sum_{i=1}^{42} p_i = 1 \\
& \sum_{i=1}^{42} p_i \bar{A}_i \succeq 0 
\end{aligned} 
\end{eqnarray}
Then $Q := R \bar{Q} R^T$ is an $\alpha$-certificate for the SDP~\eqref{sdp}. Moreover, if $\alpha = \frac{1}{9}$, then $c_i - \langle \bar{Q}, \bar{A_i} \rangle = \frac{1}{9}$ whenever $G_i$ is a sharp graph.
\end{lemma}

\begin{proof} 
Both claims follow by observing that, for every $1 \leq i \leq 42$, it holds that
\begin{align*}
\langle \bar{Q}, \bar{A}_i\rangle &= {\rm Tr}(\bar{Q}\bar{A_i}) = {\rm Tr}(\bar{Q} (R^T A_i R)) = {\rm Tr}((\bar{Q} R^T A_i) R)\\
&= {\rm Tr}(R(\bar{Q} R^T A_i)) = {\rm Tr}((R\bar{Q} R^T) A_i) = {\rm Tr}(Q A_i) = \langle Q, A_i \rangle.
\qedhere
\end{align*}
\end{proof}

With Lemma~\ref{lem::projectedSDP} in mind, we now turn to seek a $\frac {1}{9}$-certificate $\bar{Q}$ for the projected SDP~\eqref{proj_sdp}. Note that $\bar{Q}$ will be a symmetric block-diagonal matrix with blocks of sizes $1 \times 1$, $6 \times 6$, and $8 \times 8$.

\section{Finding \texorpdfstring{$\bar{Q}$}{\bar{Q}} by rounding an approximate solution} \label{sec:rounding}
Using flagmatic to compute $A_i$ and then the SDP solver~\cite{csdp}, we have found an approximate solution to the SDP problem~\eqref{proj_sdp}. 
The solver yields a $\left(\frac{1}{9} - 10^{-8}\right)$-certificate $\tilde{Q}$ for the SDP~\eqref{proj_sdp} (in particular, verifying that the optimal solution is indeed very close to $1/9$). 

Since $\bar{Q}$ is symmetric and has the block structure described above, there are $\binom{2}{2} + \binom{7}{2} + \binom{9}{2} = 58$ degrees of freedom for the entries of $\bar{Q}$. As noted in Section~\ref{sec::sharp}, the $11$ sharp graphs equations impose $9$ additional independent restrictions on the entries of $\bar{Q}$. This leaves us with $49$ degrees of freedom left. 

We chose $49$ coordinates to equal the corresponding coordinates of $\tilde{Q}$. Then we calculated the remaining coordinates which are uniquely determined by the sharp graph equations. We ordered the coordinates lexicographically and chose the values one by one from the computer generated certificate, as long as the sharp graph equations were not violated. Otherwise, we chose the only value that would allow for sharp graph equations to be satisfied. 

A word about computational precision is in order at this point. 
The entries that we set to equal the ones in $\tilde {Q}$ are taken with $4$ digits after the floating point -- this is the part where things are not precise, and we were lucky enough that the method worked and a reasonable number of digits sufficed; an important boost to this luck comes from the projection we performed in Section~\ref{sec::projection} by which, assuming we found all the kernel vectors in Section~\ref{sec::kernel}, it is plausible that $\tilde {Q}$ will have no near-zero eigenvalues. They are then presented as rationals whose denominator divides $10^4$. The remaining $9$ entries are then uniquely determined by the sharp graph equations, but they are not necessarily rational. Yet, they reside in a finite extension of the rationals (namely $\mathbb{Q}[\sqrt{2},\sqrt{3}]$), which allows the computations at that point to be infinitely precise. 
We ran this calculation in the Mathematica software which resulted in the matrix $\bar{Q}$ whose blocks appear below. 
\begin{align*}
& \bar{Q}_{\emptyset}=\frac1{10000}
\begin{pmatrix}
337
\end{pmatrix}, \\
& \bar{Q}_{\bar{E}}=\frac{1}{150000}
\begin{pmatrix}
193934 & 705 & 705 & 1230 & 1230 & 0 \\ 
705 & 257730 & -34095 & -45285 & -75735 & 80205 \\
705 & -34095 & 257730 & -75735 & -45285 & 80205 \\
1230 & -45285 & -75735 & 170280 & -86385 & -46305 \\ 
1230 & -75735 & -45285 & -86385 & 170280 & -46305 \\ 
0 & 80205 & 80205 & -46305 & -46305 & 153796 + 6480 \sqrt{3}
\end{pmatrix}, \\
& \bar{Q}_E=\frac{1}{450000}\left(M_1+P^{\rm T}\left(\sqrt{2}M_2+\sqrt{3}M_3+\sqrt{6}M_6\right)P\right),
\end{align*}
where
$$
M_1=
\begin{pmatrix}
527985 & 0 & -315450 & -315450 & 0 & -430920 & -375705 & -430920 \\
0 & 993198 & -268740 & 150840 & -29160 & 67680 & -27090 & -186480 \\
-315450 & -268740 & 536490 & -42030 & 0 & 233550 & 168435 & 220815 \\
-315450 & 150840 & -42030 & 536490 & 0 & 220815 & 168435 & 233550 \\
0 & -29160 & 0 & 0 & 663612 & -176265 & -46935 & -29475 \\ 
-430920 & 67680 & 233550 & 220815 & -176265 & 638010 & 313920 & 281700 \\
-375705 & -27090 & 168435 & 168435 & -46935 & 313920 & 542430 & 313920 \\
-430920 & -186480 & 220815 & 233550 & -29475 & 281700 & 313920 & 638010
\end{pmatrix},
$$
\begin{align*}
P=&\begin{pmatrix}
1 & 0 & 0 & 0 & 0 & 0 & 0 & 0 \\
0 & 1 & 0 & 0 & 0 & 0 & 0 & 0 \\
0 & 0 & 1 & 0 & 0 & 0 & 0 & 0 \\
0 & 0 & 0 & 1 & 0 & 0 & 0 & 0 \\
0 & 0 & 0 & 0 & 1 & 0 & 0 & 0 
\end{pmatrix},
\\
M_2=&\begin{pmatrix}
0 & -3690 & 0 & 0 & 209271 \\
-3690 & 0 & 0 & 0 & 0  \\
0 & 0 & 0 & 0 & -93902  \\
0 & 0 & 0 & 0 & -586954  \\
209271 & 0 & -93902 & -586954 & 0  
\end{pmatrix},
\\
M_3=&\begin{pmatrix}
0 & 0 & 0 & 0 & 0  \\
0 & 0 & 0 & 0 & -164793  \\ 
0 & 0 & 0 & 0 & 190140  \\
0 & 0 & 0 & 0 & 229440  \\
0 & -164793 & 190140 & 229440 & -19440  
\end{pmatrix},
\\
M_6=&\begin{pmatrix}
0 & 27442 & 0 & 0 & -76965 \\
27442 & 0 & 0 & 0 & 0  \\
0 & 0 & 0 & 0 & -72495  \\
0  & 0 & 0 & 0 & 85455  \\
-76965 & 0 & -72495 & 85455 & 0 
\end{pmatrix}.
\end{align*}

We have verified by computer software that the matrix $\bar{Q}$ is PD (positive definite) and that it satisfies $c_i - \langle \bar{Q}, \bar{A}_i\rangle \geq \frac{1}{9}$ for every $1 \leq i \leq 42$. Verifying that a matrix is PD can be done by calculating its leading principal minors.

\begin{remark}
Since $\bar{Q}$ is PD, it follows that the kernel of $Q$ is spanned by the five vectors that were listed in Lemma~\ref{lem:ker}. This demonstrates that we have indeed found all the necessary kernel vectors. 
\end{remark}

\begin{remark}
 If one only wishes to have a formal proof of Theorem~\ref{main}, one could just present $\bar{Q}$ (or the pulled back $Q$) and show that it is indeed a $\frac{1}{9}$-certificate for the corresponding SDP. This is common practice in many flag algebra applications, where the certificate $Q$ is presented without bothering to explain all the details of how it was found. 
\end{remark}

\section{Stability}\label{sec:stability}
In this section we prove Theorem~\ref{thm:stabil}. It will be obtained by combining several results. Our proof is quite long, partly because we wish to obtain very specific error terms which will serve us when proving Theorem~\ref{thm:exact} in the next section.  
\begin{lemma}\label{lem:first_step}
There is a positive constant $C_1$ such that for every $\delta > 0$ there is a positive integer $n_1(\delta)$ for which the following statement is true. If $G$ is an $n$-vertex oriented graph such that $n \geq n_1(\delta)$ and $t(G)+i(G) \leq \frac{1}{9} + \delta$, and $G^{(0)}$ is the underlying undirected graph of $G$, then $p(K_4, G^{(0)}) \leq C_1 \delta$.
\end{lemma}

\begin{proof}
Let $Q$ be the $\frac{1}{9}$-certificate that we found for the SDP~\eqref{sdp}. A straightforward albeit tedious calculation (which can be performed by computer software) shows that  
\begin{equation*}
\eta_i := c_i - \langle Q, A_{G_i} \rangle - \textstyle\frac{1}{9} > 0
\end{equation*}
holds for every $39 \leq i \leq 42$. By Claim~\ref{claim:cQA}, there is a positive integer $n_1(\delta)$ such that for every $n$-vertex oriented graph $G$ with $n \geq n_1(\delta)$, and every $1 \leq i \leq 42$, we have
$$
p(G_i,G) \left(c_i - \langle Q, A_{G_i} \rangle - \textstyle\frac{1}{9}\right) \leq t(G)+i(G) - \textstyle\frac{1}{9} + \delta.
$$
Therefore, if $G$ is an $n$-vertex oriented graph such that $n \geq n_1(\delta)$ and $t(G) + i(G) \leq \textstyle\frac{1}{9} + \delta$, then for
every $39 \leq i \leq 42$, it holds that
$$
\eta_i p(G_i,G) = p(G_i,G) \left(c_i - \langle Q, A_{G_i} \rangle - \textstyle\frac{1}{9}\right) \leq t(G) + i(G) - \textstyle\frac{1}{9} + \delta \leq 2 \delta.
$$
Hence, if $G^{(0)}$ is the underlying undirected graph of $G$, then 
\begin{align*}
p(K_4,G^{(0)}) &= p(G_{39},G) + p(G_{40},G) + p(G_{41},G) + p(G_{42},G)\\
&\leq \left(\frac{1}{\eta_{39}} + \frac{1}{\eta_{40}} + \frac{1}{\eta_{41}} + \frac{1}{\eta_{42}}\right) 2 \delta.
\qedhere
\end{align*}
\end{proof}

The following is a reformulation of Theorem 5.1 from~\cite{DHMNS} (proved, incidentally, by using flag algebras), for the complement graph. 

\begin{theorem}[Theorem 5.1 in~\cite{DHMNS}] \label{thm:DHMNS}
Any $n$-vertex $K_4$-free (undirected) graph $G$ satisfies 
$$
i(G) - \frac{47}{4036n} \sum_{v \in V}\left(\frac{d_G(v)}{n − 1} − \frac{2}{3}\right)^2 \geq \frac{1}{9} - o_n(1).
$$
\end{theorem}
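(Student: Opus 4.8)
The plan is to reconstruct the argument of~\cite{DHMNS} within the flag algebra framework of Section~\ref{sec:framework}, specialized to \emph{undirected} $K_4$-free graphs, and then to read off the stability term $\frac{47}{4036 n}\sum_v\left(\frac{d_G(v)}{n-1}-\frac23\right)^2$ from the single-vertex block of the certificate. Concretely, one sets up the undirected analogue of SDP~\eqref{sdp} with $k=4$: the objective is $i(G)=\sum_i i(G_i)\,p(G_i,G)$ where $G_1,\dots,G_m$ are the $4$-vertex undirected graphs, and the types are $\emptyset$, the one-vertex type, and the two-vertex edge / non-edge types, with all flags of size at most $\lfloor(4+|\sigma|)/2\rfloor$ over each type $\sigma$. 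The computational core of~\cite{DHMNS} is a $\tfrac19$-certificate $Q$ for this SDP, i.e.\ a block-diagonal PSD matrix with $i(G_i)-\langle Q,A_{G_i}\rangle\ge \tfrac19$ for every $K_4$-free $G_i$; note that this inequality is needed only for $K_4$-free $G_i$, since a genuinely $K_4$-free $G$ has $p(G_i,G)=0$ for all $G_i\supseteq K_4$.

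With $Q$ in hand, one argues exactly as in Claims~\ref{claim:cMA}--\ref{claim:cQA} but keeping the error term explicit. First, the undirected analogue of Claim~\ref{claim:cMA} gives
$$
i(G)-\textstyle\frac19-\langle Q,A_G\rangle=\sum_{i} p(G_i,G)\bigl(i(G_i)-\langle Q,A_{G_i}\rangle-\textstyle\frac19\bigr)\ \ge\ 0,
$$
since the only nonzero summands have $G_i$ $K_4$-free, where the bracket is $\ge0$. Hence $i(G)-\tfrac19\ge\langle Q,A_G\rangle\ge\langle Q,\tilde A_G\rangle-O(1/n)$, the last step by Lemma~\ref{lem::almostPSD}(2) together with the Hölder estimate of Claim~\ref{claim:MA}. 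Because $Q$ and $\tilde A_G$ are block-diagonal with each block PSD (Lemma~\ref{lem::almostPSD}(1)), Lemma~\ref{lem:scalar_product} gives $\langle Q,\tilde A_G\rangle=\sum_\sigma\langle Q_\sigma,(\tilde A_G)_\sigma\rangle\ge\langle Q_1,(\tilde A_G)_1\rangle$, where the subscript $1$ denotes the one-vertex-type block. Thus everything is reduced to that block.

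For the one-vertex type a rooting is a choice of vertex $v$, and (in the ordering in which the first flag attaches a neighbour of the root) $v_r=\frac1{n-1}\bigl(d_G(v),\,d_0(v)\bigr)^{\rm T}$ with $d_0(v)=n-1-d_G(v)$, so that $(\tilde A_G)_1=\frac1n\sum_v v_r v_r^{\rm T}$. The balanced complete tripartite graph $K_{n/3,n/3,n/3}$ is $K_4$-free with $i(K_{n/3,n/3,n/3})\to\tfrac19$, and for it $v_r\to\tfrac13(2,1)^{\rm T}$; so, by the argument of Lemma~\ref{lem:ker}, $(2,1)^{\rm T}\in\mathrm{Ker}(Q_1)$. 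A $2\times2$ PSD matrix with one-dimensional kernel has the form $Q_1=c\,(1,-2)^{\rm T}(1,-2)$ for some $c>0$, whence
$$
\langle Q_1,(\tilde A_G)_1\rangle=\frac1n\sum_v v_r^{\rm T}Q_1 v_r=\frac{c}{n}\sum_v\Bigl(\frac{d_G(v)-2d_0(v)}{n-1}\Bigr)^2=\frac{9c}{n}\sum_v\Bigl(\frac{d_G(v)}{n-1}-\frac23\Bigr)^2 .
$$
The certificate of~\cite{DHMNS} is the one for which $9c=\frac{47}{4036}$; combining the displays yields $i(G)-\frac{47}{4036n}\sum_v\bigl(\frac{d_G(v)}{n-1}-\frac23\bigr)^2\ge\frac19-O(1/n)$, as claimed.

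The main obstacle is producing and rigorously verifying the certificate $Q$ for the $K_4$-free SDP — a substantial semidefinite program — which in~\cite{DHMNS} is done with computer assistance (solve, round, and check PSD-ness and all $m$ linear inequalities, as in Sections~\ref{sec:rounding}). A secondary subtlety is that one wants a certificate whose one-vertex block is genuinely rank one in the direction orthogonal to $(2,1)^{\rm T}$ with the largest possible scalar $c$; \emph{any} valid certificate with $Q_1\neq0$ gives \emph{some} positive constant in place of $\tfrac{47}{4036}$, and maximizing that constant over all feasible certificates is itself an auxiliary SDP, which is presumably how the specific value $\tfrac{47}{4036}$ arises.
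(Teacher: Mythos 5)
The paper itself offers no proof of this statement: Theorem~\ref{thm:DHMNS} is imported verbatim (in complemented form) from~\cite{DHMNS}, so there is nothing internal to compare your argument against. Judged on its own terms, your reconstruction is structurally correct and is consistent with how~\cite{DHMNS} actually proceeds. The chain $i(G)-\tfrac19\ge\langle Q,A_G\rangle\ge\langle Q,\tilde A_G\rangle-O(1/n)\ge\langle Q_1,(\tilde A_G)_1\rangle-O(1/n)$ is valid (the restriction of the certificate inequalities to $K_4$-free $G_i$ is correctly justified by $p(G_i,G)=0$ otherwise, and discarding the remaining blocks is legitimate since each $\langle Q_\sigma,(\tilde A_G)_\sigma\rangle\ge0$ by Lemmas~\ref{lem::almostPSD} and~\ref{lem:scalar_product}); the identification of the one-vertex block contribution with $\tfrac{9c}{n}\sum_v\bigl(\tfrac{d_G(v)}{n-1}-\tfrac23\bigr)^2$ is a correct computation; and the kernel argument forcing $(2,1)^{\rm T}\in\mathrm{Ker}(Q_1)$ via $K_{n/3,n/3,n/3}$ mirrors Lemma~\ref{lem:ker}. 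Two caveats. First, what you have written is a correct \emph{reduction} of the theorem to the existence of a $\tfrac19$-certificate whose one-vertex block equals $c(1,-2)^{\rm T}(1,-2)$ with $9c=\tfrac{47}{4036}$; you do not produce or verify that certificate, so the quantitative content of the theorem (the specific constant) is still being taken on faith from~\cite{DHMNS}, exactly as the paper does. Second, a small presentational point: in~\cite{DHMNS} the quadratic degree term is built into the objective of the flag-algebra optimization (as the square of a one-vertex-type flag expression) rather than read off post hoc from a block of a certificate for the plain problem; the two formulations are equivalent, but your phrasing tacitly assumes the plain-$i(G)$ SDP admits a certificate whose $Q_1$ is at least $\tfrac{47}{4036}\cdot\tfrac19(1,-2)^{\rm T}(1,-2)$ in the PSD order, which is an extra fact about that SDP, not a consequence of merely having some $\tfrac19$-certificate.
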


As in~\cite{DHMNS}, we will also make use of the following result.
\begin{theorem}[\cite{AES}]\label{thm:AES}
Any $n$-vertex $K_r$-free (undirected) graph, whose minimum degree is larger than $\frac{3r−7}{3r−4}n$, is $(r−1)$-partite. 
\end{theorem}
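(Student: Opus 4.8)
This is the Andrásfai--Erdős--Sós theorem, and the denominator $3r-4$ in the threshold is exactly calibrated so that the balanced blow-up of the join $C_5 * K_{r-3}$ --- which is $K_r$-free, has chromatic number $r$, and has minimum degree precisely $\frac{3r-7}{3r-4}n$ --- is the essentially unique configuration that the strict inequality must rule out. I would prove the theorem by induction on $r$, taking the triangle-free case as the base.

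\emph{Base case $r=3$.} Here $G$ is triangle-free with $\delta(G) > \frac{2n}{5}$ and one must show it is bipartite. If not, pick a shortest odd cycle $C = c_1 c_2 \cdots c_{2k+1}$, so that $2k+1 \ge 5$. Such a $C$ is chordless (a chord would split it into two cycles of total length $2k+3$, one of which is odd and strictly shorter). Furthermore, every vertex $w \notin C$ has at most two neighbours on $C$: if $w$ were adjacent to $c_a$ and $c_b$, then the path $c_a w c_b$ closed up by the odd arc of $C$ from $c_a$ to $c_b$ is an odd closed walk, so minimality forces that odd arc to have length $\ge 2k-1$, i.e. $c_a$ and $c_b$ lie at distance $\le 2$ on $C$; triangle-freeness upgrades this to distance exactly $2$, and three such neighbours cannot coexist on $C$. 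Double counting the edges between $C$ and $V \setminus C$ now gives $(2k+1)(\delta(G)-2) \le 2(n-(2k+1))$, which simplifies to $(2k+1)\,\delta(G) \le 2n$, hence $\delta(G) \le \frac{2n}{2k+1} \le \frac{2n}{5}$, a contradiction.

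\emph{Inductive step.} Assume the statement for $r-1$ and let $G$ be $K_r$-free on $n$ vertices with $\delta := \delta(G) > \frac{3r-7}{3r-4}n$. Adding edges preserves both hypotheses and lets the conclusion pull back, so I may assume $G$ is edge-maximal $K_r$-free. If the clique number of $G$ is some $\omega \le r-2$, then $G$ is $K_{\omega+1}$-free and $\delta > \frac{3r-7}{3r-4}n \ge \frac{3(\omega+1)-7}{3(\omega+1)-4}n$ (the threshold is increasing in its parameter), so by induction $G$ is already $\omega$-partite; hence I may assume $G$ contains a copy $W$ of $K_{r-1}$. The engine of the step is the following: for every edge $xy$ of $G$ the graph $H_{xy} := G[N(x) \cap N(y)]$ is $K_{r-2}$-free (a $K_{r-2}$ inside it together with $x,y$ would be a $K_r$), it has $|V(H_{xy})| \ge d(x)+d(y)-n > \frac{3r-10}{3r-4}n$ vertices, and every $z \in V(H_{xy})$ has at most $n-|V(H_{xy})|$ neighbours outside $H_{xy}$, so $\deg_{H_{xy}}(z) \ge d(z)-(n-|V(H_{xy})|) > |V(H_{xy})| - \frac{3n}{3r-4}$; combining the last two bounds (this is where the exact value $\frac{3r-7}{3r-4}$ is used) yields $\deg_{H_{xy}}(z) > \frac{3(r-2)-7}{3(r-2)-4}\,|V(H_{xy})|$, so by induction $H_{xy}$ is $(r-3)$-partite. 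Finally, one uses the fixed clique $W$ together with the $(r-3)$-partitions of the graphs $H_{w_i w_j}$, $w_i, w_j \in W$ (each such partition has its $r-3$ parts canonically labelled by $W \setminus \{w_i,w_j\}$, since it contains that clique), to produce a proper $(r-1)$-colouring of all of $G$: each vertex outside $W$ has a non-neighbour in $W$, from which its colour is read off, and the partitions of the $H_{w_i w_j}$ enforce consistency.

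\emph{Main obstacle.} The hard part is this last gluing step --- converting the purely local information (every edge's common neighbourhood is $(r-3)$-partite, and, from the base case, the sole obstruction to bipartiteness is a $C_5$) into a single global proper $(r-1)$-colouring, equivalently, showing that an edge-maximal $K_r$-free graph strictly above the threshold is complete $(r-1)$-partite. One has to check that the $(r-3)$-partitions attached to different edges agree on their overlaps, which at bottom amounts to forbidding a blow-up of $C_5 * K_{r-3}$ inside $G$; this is exactly the point at which the strictness of $\delta > \frac{3r-7}{3r-4}n$ is indispensable, since that blow-up meets the bound with equality while having chromatic number $r$.
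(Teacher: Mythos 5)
The paper does not prove this statement at all; it is quoted verbatim from Andr\'asfai--Erd\H{o}s--S\'os \cite{AES}, so there is no internal proof to compare against, and your proposal has to stand on its own.

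What you have written is a correct and complete proof of the base case $r=3$ (the shortest-odd-cycle double count is the standard argument and your degree bound $(2k+1)\delta\le 2n$ is right), and the arithmetic in your inductive ``engine'' checks out: for an edge $xy$, the graph $H_{xy}=G[N(x)\cap N(y)]$ is $K_{r-2}$-free, has more than $\frac{3r-10}{3r-4}n$ vertices, and its minimum degree exceeds $|V(H_{xy})|-\frac{3n}{3r-4}>\frac{3(r-2)-7}{3(r-2)-4}|V(H_{xy})|$, so (strong) induction makes it $(r-3)$-partite. Your identification of the balanced blow-up of $C_5 * K_{r-3}$ as the tight example is also correct.

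However, the proof is not complete, and you say so yourself: the entire content of the inductive step is the ``gluing'' of the local $(r-3)$-partitions into a global proper $(r-1)$-colouring, and that step is only described, not carried out. Concretely, three things are missing. First, the labelling of the parts of $H_{w_iw_j}$ by $W\setminus\{w_i,w_j\}$ is only canonical if the $(r-3)$-partition is unique, which does not follow from $(r-3)$-partiteness alone and needs a separate connectivity/degree argument. Second, ``each vertex outside $W$ has a non-neighbour in $W$, from which its colour is read off'' does not define a colouring (a vertex may have several non-neighbours in $W$) and, more importantly, properness of any such colouring is exactly the assertion that non-adjacency restricted to the relevant classes is transitive --- equivalently that an edge-maximal counterexample is complete multipartite --- which is the theorem itself in disguise. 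Third, the mechanism by which the strict inequality $\delta>\frac{3r-7}{3r-4}n$ excludes a blow-up of $C_5 * K_{r-3}$ (or whatever inconsistency witness arises when two local partitions disagree) is never exhibited: one must actually produce, from a failure of the gluing, a configuration whose degree count violates the hypothesis. Until that argument is supplied, the inductive step is a plan rather than a proof.
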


Combining Theorem~\ref{thm:DHMNS} and Theorem~\ref{thm:AES} yields the following corollary.
\begin{corollary} \label{cor:DHMNS+AES}
For every $0 < \varepsilon < \frac{1}{13}$ there is a positive integer $n_2(\varepsilon)$ such that the following statement is true. Let $G$ be an $n$-vertex $K_4$-free (undirected) graph such that $n \geq n_2(\varepsilon)$ and $i(G) \leq \frac{1}{9} + \varepsilon^6$, and let $B$ be the set of 
vertices of $G$ whose degree is less than $\left(\frac{2}{3} - 5\varepsilon^2 \right) n$.
Then 
\begin{enumerate}
\item $|B| \leq 5\varepsilon^2 n$; 
\item The graph obtained from $G$ by deleting the vertices of $B$ (and the edges incident to those vertices) is $3$-partite.
\end{enumerate}
\end{corollary}

\begin{proof} 
Starting with 1, it follows from Theorem~\ref{thm:DHMNS} that there is a positive integer $n_0(\varepsilon)$ such that for every $n \geq n_0(\varepsilon)$, if $G$ is an $n$-vertex $K_4$-free (undirected) graph, then 
\begin{equation} \label{eq::DegreesTwoThirds}
i(G) - \frac{47}{4036n} \sum_{v \in V}\left(\frac{d_G(v)}{n − 1} − \frac{2}{3}\right)^2 \geq \frac{1}{9} -\frac{1}{6} \varepsilon^6.
\end{equation}
Let 
$$
n_2(\varepsilon) = \max \left\{n_0(\varepsilon), \frac{4 - 27\varepsilon^2}{3 \varepsilon^2}\right\}.
$$ 
Let $G$ be an $n$-vertex $K_4$-free undirected graph with $n \geq n_2(\varepsilon)$ and $i(G) \leq \frac{1}{9} + \varepsilon^6$. Let $B$ be the set of vertices of $G$ whose degree is smaller than $\left(\frac{2}{3} - 5\varepsilon^2 \right) n$. 
First, note that for every $v \in B$, it holds that
\begin{equation} \label{eq::degreesInB}
d_G(v) < \left(\frac{2}{3} - 5\varepsilon^2 \right) n = \left(\frac{2}{3} - \frac{9}{2} \varepsilon^2 \right) (n-1) - \frac{1}{2} \varepsilon^2 \left(n - \frac{4 - 27\varepsilon^2}{3 \varepsilon^2}\right) \leq \left(\frac{2}{3} - \frac{9}{2} \varepsilon^2\right)(n-1).
\end{equation}
Therefore
$$
\frac{47}{4036n}|B|\left(\frac{9}{2}\varepsilon^2\right)^2\leq\frac{47}{4036n}\sum_{v\in V}\left(\frac{d_G(v)}{n − 1}−\frac{2}{3}\right)^2\leq i(G)-\frac{1}{9}+\frac{1}{6}\varepsilon^6\leq \frac{7}{6}\varepsilon^6,
$$
where the first inequality holds by~\eqref{eq::degreesInB} and the second inequality holds by~\eqref{eq::DegreesTwoThirds}. Hence
$$
|B| \leq \frac{4036}{47} \left(\frac{2}{9}\right)^2 \frac{7}{6} \varepsilon^2 n < 5\varepsilon^2 n.
$$

Next, we prove 2. Let $H$ be the graph obtained from $G$ by deleting all the vertices of $B$. Since $G$ is $K_4$-free, then clearly so is $H$. Moreover, for every vertex $v$ of $H$, it holds that
\begin{align*}
d_H(v) &\geq d_G(v) - |B| = d_G(v) - \textstyle\frac{3}{8}|B| - \textstyle\frac{5}{8}|B| \geq \left(\frac{2}{3} - 5\varepsilon^2 \right) n - \frac{3}{8} 5\varepsilon^2 n -\frac{5}{8}|B|\\
&= \textstyle\frac{55}{8} \left(\textstyle\frac{1}{165} - \varepsilon^2\right)n + \frac{5}{8}(n - |B|) > \textstyle\frac{5}{8}(n-|B|).
\end{align*}
Therefore, $H$ is $3$-partite by Theorem~\ref{thm:AES}.
\end{proof}

\begin{lemma}\label{lem:last_step}
Let $\varepsilon > 0$, let $G=(V,E)$ be an $n$-vertex oriented graph, let $B$ be the set of vertices of $G$ whose degree is less than $\left(\frac{2}{3} - 5\varepsilon^2\right)n$, and suppose that $|B| \leq 5\varepsilon^2 n$ and that $V \setminus B$ is the disjoint union of three independent sets $V_0, V_1, V_2$. Then
\begin{enumerate}
\item For every $0 \leq i \leq 2$, it holds that
$$
 \left(\textstyle\frac{1}{3} - 15 \varepsilon^2\right) n \leq |V_i| \leq \left(\textstyle\frac{1}{3} + 5 \varepsilon^2\right) n;
$$
\item Assume that, additionally, $\varepsilon < \frac{1}{360}$, $n \geq \frac{2}{\varepsilon^2}$ and $t(G) + i(G) \leq \frac{1}{9} + \varepsilon^6$. For all integers $0 \leq i \neq j \leq 2$, let $E(V_i, V_j)$ denote the set of edges of $G$ which are directed from a vertex of $V_i$ to a vertex of $V_j$. Then, either
$$
|E(V_0,V_1)| + |E(V_1,V_2)| + |E(V_2,V_0)| \leq 12 \varepsilon n^2
$$
or
$$
|E(V_1,V_0)| + |E(V_2,V_1)| + |E(V_0,V_2)| \leq 12 \varepsilon n^2.
$$
Moreover, in the former case, for every $0 \leq i \leq 2$, it holds that
$$
|\{u \in V_i : d^+_G(u,V_{i-1})< \left(\textstyle\frac{1}{3} -4\varepsilon - 15 \varepsilon^2\right) n\}|\leq \textstyle\frac{15}{2}\varepsilon n
$$
and
$$
|\{u \in V_i : d^-_G(u,V_{i+1}) < \left(\textstyle\frac{1}{3} - 4 \varepsilon - 15 \varepsilon^2\right) n\}| \leq \textstyle\frac{15}{2} \varepsilon n,
$$
(where the indices are reduced modulo 3) and in the latter case, for every $0 \leq i \leq 2$, it holds that
$$
|\{u \in V_i : d^+_G(u,V_{i+1}) < \left(\textstyle\frac{1}{3} - 4 \varepsilon - 15 \varepsilon^2\right) n\}| \leq \textstyle\frac{15}{2} \varepsilon n
$$
and
$$
|\{u \in V_i : d^-_G(u,V_{i-1}) < \left(\textstyle\frac{1}{3} - 4 \varepsilon - 15 \varepsilon^2\right) n\}| \leq \textstyle\frac{15}{2} \varepsilon n.
$$
\end{enumerate}
\end{lemma}

Note that the part of the statement of Lemma~\ref{lem:last_step} referring to the number of vertices whose degrees are atypically small (a similar statement will be made in Proposition~\ref{prop:stabil} as well), is not needed for the proof of Theorem~\ref{thm:stabil}; it will be used in the next section when we will prove Theorem~\ref{thm:exact}.

\begin{proof} [Proof of Lemma~\ref{lem:last_step}]
Starting with 1, fix some integer $0 \leq i \leq 2$ and some vertex $v_i \in V_i$. Since $V_i$ is an independent set, it follows that
\begin{equation}\label{eq:last_step_1}
|V_i| \leq n - d_G(v_i) \leq n - \left(\textstyle\frac{2}{3} - 5\varepsilon^2\right) n = \left(\textstyle\frac{1}{3} + 5\varepsilon^2\right) n.
\end{equation}
Therefore, for all integers $0 \leq i \neq j \leq 2$ and for every $v_j \in V_j$, it holds that
\begin{equation} \label{eq:last_step_2}
d_G(v_j, V_i) \geq d_G(v_j) - |V_{3-i-j}| - |B| \geq \left(\textstyle\frac{2}{3} - 5 \varepsilon^2\right) n - \left(\textstyle\frac{1}{3} + 5\varepsilon^2\right) n - 5\varepsilon^2 n = \left(\textstyle\frac{1}{3} - 15 \varepsilon^2\right) n. 
\end{equation}
In particular, for every $0 \leq i \leq 2$, it holds that
\begin{equation}\label{eq:last_step_3}
|V_i| \geq \left(\textstyle\frac{1}{3} - 15 \varepsilon^2\right)n.
\end{equation}

Next, we prove 2. It follows by~\eqref{eq:last_step_3} that 
\begin{align*}
i(G) &\geq \frac{1}{\binom{n}{3}} \left[\binom{|V_0|}{3} + \binom{|V_1|}{3} + \binom{|V_2|}{3}\right] \geq \left(\frac{|V_0| - 2}{n}\right)^3 + \left(\frac{|V_1| - 2}{n}\right)^3 + \left(\frac{|V_2| - 2}{n}\right)^3 \\
&\geq 3 \left(\textstyle\frac{1}{3} - 15 \varepsilon^2 - \textstyle\frac{2}{n}\right)^3 \geq 3 \left(\textstyle\frac{1}{3} - 16 \varepsilon^2\right)^3 = \textstyle\frac{1}{9} - 16 \varepsilon^2 +3 \cdot 16^2 \left(\textstyle\frac{1}{\varepsilon^2} - 16\right) \varepsilon^6 > \textstyle\frac{1}{9} - 16 \varepsilon^2 + \varepsilon^6.
\end{align*}
Hence 
\begin{equation}\label{eq:last_step_4}
t(G) = t(G) + i(G) - i(G) < \left(\textstyle\frac{1}{9} + \varepsilon^6\right) - \left(\textstyle\frac{1}{9} - 16\varepsilon^2 + \varepsilon^6\right) = 16 \varepsilon^2.
\end{equation}
Since $\frac{1}{3} - 15 \varepsilon^2 > 2 \cdot 4 \varepsilon$, it follows by~\eqref{eq:last_step_2} that, for all integers $0 \leq i \neq j \leq 2$, the set $V_i$ is the disjoint union of the sets  
\begin{align*}
V^+_{i,j} &:= \{v \in V_i : d_G^-(v, V_j) \leq 4 \varepsilon n\},\\
V^-_{i,j} &:= \{v \in V_i : d_G^+(v, V_j) \leq 4 \varepsilon n\},\\
\tilde{V}_{i,j} &:= \{v \in V_i : d_G^+(v, V_j) > 4 \varepsilon n,\, d_G^-(v, V_j) > 4 \varepsilon n\}.
\end{align*}
We would now like to show that $\tilde{V}_{i,j}$ is fairly small. Let $v_i$ be some vertex of $\tilde{V}_{i,j}$ (if $\tilde{V}_{i,j} = \emptyset$, then there is nothing to prove) and let $k = 3-i-j$. By~\eqref{eq:last_step_2}, either $d_G^+(v_i,V_k) \geq \left(\frac{1}{6} - \frac{15}{2} \varepsilon^2\right) n$ or $d_G^-(v_i,V_k) \geq \left(\frac{1}{6} - \frac{15}{2} \varepsilon^2\right) n$. Without loss of generality, assume that $d_G^+(v_i,V_k) \geq \left(\frac{1}{6} - \frac{15}{2}\varepsilon^2\right) n$. For every $v_k \in N_G^+(v_i, V_k)$ we have 
\begin{align} \label{eq::NotATransitiveTriangle}
|V_j \setminus N_G(v_k, V_j)| &\leq |(V \setminus V_k) \setminus N_G(v_k)| = n - |V_k| - d_G(v_k) \nonumber \\
&\leq n - \left(\textstyle\frac{1}{3} - 15 \varepsilon^2\right) n - \left(\textstyle\frac{2}{3} - 5 \varepsilon^2\right) n = 20 \varepsilon^2 n,
\end{align}
where the second inequality holds by~\eqref{eq:last_step_3}. Hence 
$$
|N_G^+(v_i, V_j) \cap N_G(v_k, V_j)| \geq d_G^+(v_i, V_j) - |V_j \setminus N_G(v_k, V_j)| > 4 \varepsilon n - 20 \varepsilon^2 n > \textstyle\frac{11}{3} \varepsilon n,
$$
where the penultimate inequality holds by the definition of $\tilde{V}_{i,j}$ and by~\eqref{eq::NotATransitiveTriangle}. It follows that $v_i$ participates in at least $\left(\frac{1}{6} - \frac{15}{2} \varepsilon^2\right) n \cdot \frac{11}{3} \varepsilon n$ transitive triangles, implying that
$$
t(G) \geq \frac{1}{\binom{n}{3}} |\tilde{V}_{i,j}| \left(\textstyle\frac{1}{6} - \textstyle\frac{15}{2} \varepsilon^2\right) n \cdot \textstyle\frac{11}{3} \varepsilon n > 6 \left(\textstyle\frac{1}{6} - \textstyle\frac{15}{2} \varepsilon^2\right) \textstyle\frac{11}{3} \cdot \textstyle\frac{\varepsilon}{n} |\tilde{V}_{i,j}| > \textstyle\frac{32\varepsilon}{9n} |\tilde{V}_{i,j}|.
$$
It thus follows by~\eqref{eq:last_step_4} that
\begin{equation} \label{eq:last_step_5}
|\tilde{V}_{i,j}| \leq \textstyle\frac{9n}{32\varepsilon} t(G) < \textstyle\frac{9}{2} \varepsilon n.
\end{equation}

Let
$$
{\mathcal E}^+ = \{(i,j) \in \{0,1,2\}^2 : i \neq j, \, |V^+_{i,j}| \leq 3 \varepsilon n\}
$$
and let 
$$
{\mathcal E}^- = \{(i,j) \in \{0,1,2\}^2 : i \neq j, \, |V^-_{i,j}| \leq 3 \varepsilon n\}.
$$
For every $(i,j) \in {\mathcal E}^+$, it holds that
\begin{align}
|E(V_i,V_j)| &= \sum_{v \in V_i} d^+(v, V_j) = \sum_{v \in V^-_{i,j}} d^+(v, V_j) + \sum_{v \in V^+_{i,j} \cup \tilde{V}_{i,j}} d^+(v, V_j)\nonumber\\
&\leq |V_i| \cdot 4 \varepsilon n + (|V^+_{i,j}| + |\tilde{V}_{i,j}|) \cdot |V_j|\nonumber\\
&\leq \left(\textstyle\frac{1}{3} + 5\varepsilon^2\right) n \cdot 4 \varepsilon n + \left(3\varepsilon n + \textstyle\frac{9}{2} \varepsilon n \right) \left(\textstyle\frac{1}{3} + 5 \varepsilon^2\right) n = \textstyle\frac{23}{2} \left(\textstyle\frac{1}{3} + 5 \varepsilon^2\right) \varepsilon n^2 < 4 \varepsilon n^2,\label{eq:E}
\end{align}
where the second inequality holds by~\eqref{eq:last_step_1} and~\eqref{eq:last_step_5}.
Furthermore, for every $(i,j) \in {\mathcal E}^+$, it holds that
\begin{equation}\label{eq:degrees_plus}
|\{u \in V_i : d^-_G(u, V_j) < \left(\textstyle\frac{1}{3} - 4 \varepsilon - 15 \varepsilon^2\right) n\}| \leq |V_i \setminus V^-_{i,j}| = |V^+_{i,j}| + |\tilde{V}_{i,j}| \leq \textstyle\frac{15}{2} \varepsilon n,
\end{equation}
where the first inequality holds by~\eqref{eq:last_step_2} and the last inequality holds by~\eqref{eq:last_step_5}.
Similarly, for every $(i,j) \in {\mathcal E}^-$, it holds that 
\begin{equation}\label{eq:degrees_minus}
|\{u \in V_i : d^+_G(u, V_j) < \left(\textstyle\frac{1}{3} - 4\varepsilon - 15 \varepsilon^2\right) n\}| \leq |V_i \setminus V^+_{i,j}| = |V^-_{i,j}| + |\tilde{V}_{i,j}|\leq \textstyle\frac{15}{2}\varepsilon n.
\end{equation}  
Fix some $i \neq j$ such that $(i,j) \in \{0,1,2\}^2 \setminus {\mathcal E}^+$, and let $k = 3-i-j$.
For every vertex $v_k \in V^+_{k,j}$, we have
\begin{align*}
|N_G(v_k, V_i) \cap V^+_{i,j}| &\geq d_G(v_k, V_i) + |V^+_{i,j}| - |V_i|
> \left(\textstyle\frac{1}{3} - 15 \varepsilon^2\right) n + 3 \varepsilon n - \left(\textstyle\frac{1}{3} + 5 \varepsilon^2\right) n \\ 
&= (3 - 20 \varepsilon) \varepsilon n > \textstyle\frac{53}{18} \varepsilon n,
\end{align*}
where the second inequality holds by~\eqref{eq:last_step_1} and~\eqref{eq:last_step_2}. Moreover, for every $v_i \in N_G(v_k, V_i) \cap V^+_{i,j}$, we have 
\begin{align*}
|N_G^+(v_k, V_j) \cap N_G^+(v_i,V_j)| &\geq d^+_G(v_k, V_j) + d^+_G(v_i, V_j) - |V_j|\\
&= d_G(v_k, V_j) - d^-_G(v_k, V_j) + d_G(v_i, V_j) - d^-_G(v_i, V_j) - |V_j|\\
&> 2\left(\left(\textstyle\frac{1}{3} - 15\varepsilon^2\right) n - 4 \varepsilon n\right) - \left(\textstyle\frac{1}{3} + 5\varepsilon^2\right) n = \left(\textstyle\frac{1}{3} - 8 \varepsilon - 35 \varepsilon^2\right) n > \textstyle\frac{16}{53} n,
\end{align*}
where the second inequality holds by~\eqref{eq:last_step_1} and~\eqref{eq:last_step_2}, and since $v_k \in V^+_{k,j}$ and $v_i \in V^+_{i,j}$. 
It thus follows by~\eqref{eq:last_step_4} that
\begin{align*}
\frac{16}{6} \varepsilon^2 n^3 &> \binom{n}{3} t(G) \geq \sum_{v_k \in V^+_{k,j}} \sum_{v_i \in N_G(v_k, V_i) \cap V^+_{i,j}} |N_G^+(v_k, V_j) \cap N_G^+(v_i,V_j)|\\
&\geq \sum_{v_k \in V^+_{k,j}} |N_G(v_k, V_i) \cap V^+_{i,j}| \cdot \frac{16}{53} n \geq  |V^+_{k,j}| \cdot \frac{53}{18} \varepsilon n \cdot \frac{16}{53} n = \frac{16}{6} \varepsilon^2 n^3 \frac{|V^+_{k,j}|}{3 \varepsilon n},
\end{align*}
implying that $(k,j) \in {\mathcal E}^+$. Moreover, for all integers $0 \leq i \neq j \leq 2$, it follows by~\eqref{eq:last_step_2} and~\eqref{eq:last_step_3} that
$$
|E(V_i,V_j)| + |E(V_j,V_i)| = \sum_{v \in V_i} d_G(v,V_j) \geq  \left(\left(\textstyle\frac{1}{3} - 15 \varepsilon^2\right) n \right)^2 > 2 \cdot 4 \varepsilon n^2.
$$
Therefore, by~\eqref{eq:E}, we cannot have both $(i,j) \in{\mathcal E}^+$ and $(j,i) \in {\mathcal E}^+$.

It follows that either ${\mathcal E}^+ = \{(0,1),(1,2),(2,0)\}$ or ${\mathcal E}^+ = \{(1,0),(2,1),(0,2)\}$. Similarly, either ${\mathcal E}^- = \{(0,1),(1,2),(2,0)\}$ or ${\mathcal E}^- = \{(1,0),(2,1),(0,2)\}$. Moreover, for all integers $0 \leq i \neq j \leq 2$, we cannot have both $(i,j) \in {\mathcal E}^+$ and $(i,j) \in {\mathcal E}^-$, as $|V^+_{i,j}| + |V^-_{i,j}| = |V_i| - |\tilde{V}_{i,j}| \geq (\frac{1}{3} - 15 \varepsilon^2) n - \frac{9}{2} \varepsilon n > 2 \cdot 3 \varepsilon n$. We conclude that either ${\mathcal E}^+ = \{(0,1),(1,2),(2,0)\}, \, {\mathcal E}^- = \{(1,0),(2,1),(0,2)\}$ or ${\mathcal E}^+ = \{(1,0),(2,1),(0,2)\}, \, {\mathcal E}^- = \{(0,1),(1,2),(2,0)\}$. Part 2 of the lemma now readily follows by~\eqref{eq:E}, \eqref{eq:degrees_plus} and~\eqref{eq:degrees_minus}.
\end{proof}

\begin{proposition} \label{prop:stabil}
For every $0<\varepsilon<\textstyle\frac{1}{360}$, there exist a positive integer $n_0(\varepsilon)$ and $\delta(\varepsilon) > 0$ such that the following holds for every $n \geq n_0(\varepsilon)$. If $G$ is an $n$-vertex oriented graph satisfying 
$$
t(G) + i(G) \leq \textstyle\frac{1}{9} + \delta(\varepsilon),
$$
then the set of vertices of $G$ is the disjoint union of four sets $B,V_0,V_1,V_2$, and there is an oriented graph $\tilde{G}$ obtained from $G$ by deleting at most $\frac{1}{12}\varepsilon^6 n^2$ edges such that the following hold.
\begin{enumerate}
\item The oriented graph $\tilde{G}$ has at least $\frac{1}{3}n^2-\frac{1}{15}\varepsilon n^2$ edges and 
$$|E_{\tilde{G}}(V_1,V_0)| + |E_{\tilde{G}}(V_2,V_1)| + |E_{\tilde{G}}(V_0,V_2)| \leq 12 \varepsilon n^2.$$
\item For every $0 \leq i \leq 2$, it holds that $V_i$ is independent in $\tilde{G}$, and
$$
 \left(\textstyle\frac{1}{3} - 15 \varepsilon^2\right) n \leq |V_i| \leq \left(\textstyle\frac{1}{3} + 5 \varepsilon^2\right) n;
$$
\item $|B| \leq 5\varepsilon^2 n$ and for every $0 \leq i \leq 2$, it holds that
$$
|\{u \in V_i : d^+_{\tilde{G}}(u,V_{i+1}) < \left(\textstyle\frac{1}{3} - 4 \varepsilon - 15 \varepsilon^2\right) n\}| \leq \textstyle\frac{15}{2} \varepsilon n
$$
and
$$
|\{u \in V_i : d^-_{\tilde{G}}(u,V_{i-1}) < \left(\textstyle\frac{1}{3} - 4 \varepsilon - 15 \varepsilon^2\right) n\}| \leq \textstyle\frac{15}{2} \varepsilon n,
$$
where the indices are reduced modulo 3.
\end{enumerate}
\end{proposition}

\begin{proof}
By the (undirected) graph removal lemma~\cite{ADLRY} (see also~\cite{CF} and the many references therein) there is a $\delta_0 > 0$ and a positive integer $n_4$ such that for every (undirected) graph $G$ on $n \geq n_4$ vertices for which $p(K_4, G) \leq \delta_0$, we can delete at most $\frac{1}{12} \varepsilon^6 n (n-1)$ edges of $G$ to obtain an undirected $K_4$-free graph. 
Let $C_1$ be as in Lemma~\ref{lem:first_step} and let
$$
\delta = \min\left\{\frac{1}{C_1}\delta_0, \frac{1}{2}\varepsilon^6\right\}.
$$
Let $n_1(\delta)$ be as in Lemma \ref{lem:first_step}, let $n_2(\varepsilon)$ be as in Corollary \ref{cor:DHMNS+AES}, and let
$$
n_0 = \max\left\{n_1(\delta), n_2(\varepsilon), \frac{2}{\varepsilon^2}, n_4\right\}.
$$
Let $G$ be an oriented graph on $n \geq n_0$ vertices such that $t(G) + i(G) \leq \frac{1}{9} + \delta$. Let $G^{(0)}$ be the underlying undirected graph of $G$. It follows by Lemma~\ref{lem:first_step} that $p(K_4, G^{(0)}) \leq C_1 \delta \leq \delta_0$, and therefore, we can delete at most $\frac{1}{12}\varepsilon^6 n^2$ edges of $G^{(0)}$ to obtain an undirected $K_4$-free graph $G^{(1)}$. 
Note that 
$$
i(G^{(1)}) \leq i(G^{(0)}) + 6 \frac{1}{12} \varepsilon^6 = i(G) + \frac{1}{2}
\varepsilon^6 \leq \frac{1}{9} + \delta + \frac{1}{2} \varepsilon^6 \leq \frac{1}{9} + \varepsilon^6.
$$
By Corollary~\ref{cor:DHMNS+AES}, the set $B$ of vertices of $G^{(1)}$ whose degree is less than $\left(\frac{2}{3} - 5\varepsilon^2\right) n$ is of size at most $5\varepsilon^2 n$ and $V \setminus B$ is the disjoint union of three independent sets $V_0,V_1,V_2$.
Let $\tilde{G}$ be the oriented graph obtained from $G^{(1)}$ by orienting each of its edges as it was oriented in $G$. Clearly, $\tilde{G}$ is obtained from $G$ by deleting at most $\frac{1}{12}\varepsilon^6 n^2$ edges, and the number of edges in $\tilde{G}$ is at least
$$\textstyle\frac{1}{2}|V\setminus B|\left(\textstyle\frac{2}{3}-5\varepsilon^2\right)n\geq \textstyle\frac{1}{2}\left(n-5\varepsilon^2 n\right)\left(\frac{2}{3}-5\varepsilon^2\right)n\geq \textstyle\frac{1}{3}n^2-\textstyle\frac{25}{6}\varepsilon^2 n^2>\textstyle\frac{1}{3}n^2-\textstyle\frac{1}{15}\varepsilon n^2.$$
Observe also that $t(\tilde{G}) \leq t(G)$ and that $i(\tilde{G}) = i(G^{(1)}) \leq i(G) + \frac{1}{2} \varepsilon^6$, implying that 
$$
t(\tilde{G}) + i(\tilde{G}) \leq t(G) + i(G) + \textstyle\frac{1}{2}
\varepsilon^6 \leq \textstyle\frac{1}{9} + \delta + \textstyle\frac{1}{2} \varepsilon^6 \leq \textstyle\frac{1}{9} + \varepsilon^6.
$$
Without loss of generality, it then follows by Lemma~\ref{lem:last_step} that 
$$
|E_{\tilde{G}}(V_1,V_0)| + |E_{\tilde{G}}(V_2,V_1)| + |E_{\tilde{G}}(V_0,V_2)| \leq 12 \varepsilon n^2,
$$ 
and that, for every $0 \leq i \leq 2$, we have
$$
 \left(\textstyle\frac{1}{3}-15\varepsilon^2\right)n\leq |V_i|\leq \left(\textstyle\frac{1}{3}+5\varepsilon^2\right)n,$$ 
$$
|\{u \in V_i : d^+_{\tilde{G}}(u,V_{i+1}) < \left(\textstyle\frac{1}{3} - 4 \varepsilon - 15 \varepsilon^2\right) n\}| \leq \textstyle\frac{15}{2} \varepsilon n
$$
and
\begin{equation*}
|\{u \in V_i : d^-_{\tilde{G}}(u,V_{i-1}) < \left(\textstyle\frac{1}{3} - 4 \varepsilon - 15 \varepsilon^2\right) n\}| \leq \textstyle\frac{15}{2} \varepsilon n.
\qedhere\end{equation*}
\end{proof}

\begin{proof}[Proof of Theorem \ref{thm:stabil}]
Let $\varepsilon_0$ be a real number satisfying 
$$
0 < \varepsilon_0 < \min\left\{\frac{\varepsilon}{25}, \frac{1}{360}\right\},
$$
and let $n_0 = n_0(\varepsilon_0)$ and $\delta = \delta(\varepsilon_0)$ be as in Proposition~\ref{prop:stabil}.
Let $G$ be an oriented graph on $n \geq n_0$ vertices such that $t(G) + i(G) \leq \frac{1}{9} + \delta$.
By Proposition~\ref{prop:stabil}, the set of vertices of $G$ is the disjoint union of four sets $B,V_0,V_1,V_2$ and there exists an oriented graph $G'$ which is obtained from $G$ by deleting at most $\frac{1}{12} \varepsilon_0^6 n^2$ edges, and it satisfies the following properties:
\begin{enumerate}
\item $|E(G')| \geq \frac{1}{3} n^2 - \frac{1}{15} \varepsilon_0 n^2$;
\item $|E_{G'}(V_1,V_0)| + |E_{G'}(V_2,V_1)| + |E_{G'}(V_0,V_2)| \leq 12 \varepsilon_0 n^2$;
\item for every $0 \leq i \leq 2$, it holds that $V_i$ is independent and
$$
 \left(\textstyle\frac{1}{3} - 15 \varepsilon_0^2 \right) n \leq |V_i| \leq \left(\textstyle\frac{1}{3} + 5 \varepsilon_0^2 \right) n.
$$
\end{enumerate}
For every $0 \leq i \leq 2$, let $\tilde{V}_i \subseteq V_i$ be an arbitrary set of size $\left\lceil\left(\frac{1}{3} - 15 \varepsilon_0^2 \right) n \right\rceil$. Let $G''$ be the oriented graph obtained from $G'$ by deleting all edges in $E_{G'}(V_1,V_0) \cup E_{G'}(V_2,V_1) \cup E_{G'}(V_0,V_2)$ and all edges with an endpoint in $V \setminus \left(\tilde{V}_0 \cup \tilde{V}_1 \cup \tilde{V}_2\right)$. Altogether, at most
$$
12\varepsilon_0 n^2+\left(n-3\left(\textstyle\frac{1}{3}-15\varepsilon_0^2\right)n\right)(n-1)<(12\varepsilon_0+45\varepsilon_0^2)n^2\leq\left(12+\textstyle\frac{1}{8}\right)\varepsilon_0 n^2
$$
edges were deleted. Hence the oriented graph $G''$ has at least $ \frac{1}{3} n^2 - \frac{1}{15} \varepsilon_0 n^2 - \left(12 + \frac{1}{8}\right) \varepsilon_0 n^2$ edges, all of which are direced from $\tilde{V}_0$ to $\tilde{V}_1$, from $\tilde{V}_1$ to $\tilde{V}_2$, or from $\tilde{V}_2$ to $\tilde{V}_0$. Finally, we turn $G''$ into ${\cal B}_n$ by distributing the vertices of $V \setminus \left(\tilde{V}_0 \cup \tilde{V}_1 \cup \tilde{V}_2\right)$ among the sets $\tilde{V}_0, \tilde{V}_1, \tilde{V}_2$ in a way which forms a balanced partition, and then adding all absent edges. Note that we need to add at most 
$$
\textstyle\frac{1}{3}n^2-\left(\textstyle\frac{1}{3}n^2-\textstyle\frac{1}{15}\varepsilon_0 n^2-\left(12+\textstyle\frac{1}{8}\right)\varepsilon_0 n^2\right)=\left(12+\textstyle\frac{1}{8}+\textstyle\frac{1}{15}\right)\varepsilon_0 n^2
$$
edges.
To summarize, we have turned $G$ into ${\cal B}_n$ by deleting or adding at most
$$
\textstyle\frac{1}{12} \varepsilon_0^6 n^2 + \left(12+\textstyle\frac{1}{8}\right)\varepsilon_0 n^2+\left(12+\textstyle\frac{1}{8}+\textstyle\frac{1}{15}\right)\varepsilon_0 n^2 < 25\varepsilon_0 n^2\leq\varepsilon n^2
$$
edges.
\end{proof}

\section{An exact result}\label{sec:exact}
In this section we use the stability result we proved in the previous section, to prove Theorem~\ref{thm:exact}.
Our argument builds  on the proof of Theorem 5.4 in~\cite{DHMNS}, but also requires several new ideas. 

First, let us introduce some additional notation. For an oriented graph $G = (V,E)$ and a set $S \subseteq V$, let $T_3(S, G)$ denote the number of transitive triangles in $G$ that contain all the vertices of $S$ and let $I_3(S, G)$ denote the number of independent triples in $G$ that contain all the vertices of $S$. We abbreviate $T_3(\emptyset, G)$ to $T_3(G)$ and $I_3(\emptyset, G)$ to $I_3(G)$. Moreover, for every $u \in V$ we abbreviate $T_3(\{u\}, G)$ to $T_3(u, G)$ and $I_3(\{u\}, G)$ to $I_3(u, G)$. 

\begin{proof}[Proof of Theorem \ref{thm:exact}]
Fix $\varepsilon > 0$ to be sufficiently small so as to handle all the calculations that are spread throughout the proof and let $n_0 = n_0(\varepsilon)$ be as in Proposition~\ref{prop:stabil}. Let $n$ and $G = (V,E)$ be as in the statement of the theorem. In order to prove Theorem~\ref{thm:exact}, we will prove that $G$ satisfies the following five properties:
\begin{description}
\item [(i)] $V_0 \cup V_1 \cup V_2$ is an equipartition of $V$, i.e., $\lfloor n/3 \rfloor \leq |V_0|, |V_1|, |V_2| \leq \lceil n/3 \rceil$;

\item [(ii)] $V_i$ is independent for every $0 \leq i \leq 2$;

\item [(iii)] There are no directed edges from $V_i$ to $V_{i-1}$ for any $0 \leq i \leq 2$ (where the indices are reduced modulo $3$).

\item [(iv)] For every $0 \leq i\neq j \leq 2$, every vertex in $V_i$ has at most one non-neighbour in $V_j$.  

\item [(v)] $E \cap \{\vec{xy}, \vec{yz}, \vec{zx}\} \neq \emptyset$ for every $x \in V_0$, $y \in V_1$ and $z \in V_2$.  
\end{description}

It follows from Observation~\ref{obs:blowup} that $t(G) + i(G) < 1/9$. Therefore, by Proposition \ref{prop:stabil}, the set of vertices of $G$ is the disjoint union of four sets $\tilde{B},\tilde{V}_0,\tilde{V}_1,\tilde{V}_2$ and there is an oriented graph $\tilde{G}$ obtained from $G$ by deleting some edges such that $|\tilde{B}|\leq 5\varepsilon^2 n$ and for every $0 \leq i \leq 2$, it holds that
$$
 \left(\textstyle\frac{1}{3} - 15 \varepsilon^2\right) n \leq |\tilde{V}_i| \leq \left(\textstyle\frac{1}{3} + 5 \varepsilon^2\right) n,
$$
$$
|\{u \in \tilde{V}_i : d^+_{\tilde{G}}(u, \tilde{V}_{i+1}) < \left(\textstyle\frac{1}{3} - 4\varepsilon - 15 \varepsilon^2\right) n\}| \leq \textstyle\frac{15}{2} \varepsilon n
$$
and
$$
|\{u \in \tilde{V}_i : d^-_{\tilde{G}}(u, \tilde{V}_{i-1}) < \left(\textstyle\frac{1}{3} - 4\varepsilon - 15 \varepsilon^2\right) n\}| \leq \textstyle\frac{15}{2} \varepsilon n.
$$
For every $0 \leq i \leq 2$, let
$$
A_i = \left\{u \in \tilde{V}_i : \min\{d^-_G(u, \tilde{V}_{i-1}), d^+_G(u, \tilde{V}_{i+1})\} \geq \left(\textstyle\frac{1}{3} - 4\varepsilon - 15 \varepsilon^2\right) n \right\}
$$
and let 
$$
B_0 = \tilde{B} \cup (\tilde{V}_0 \setminus A_0) \cup (\tilde{V}_1 \setminus A_1) \cup (\tilde{V}_2 \setminus A_2).
$$
Observe that $|\tilde{V}_i \setminus A_i| \leq 15 \varepsilon n$ holds for every $0 \leq i \leq 2$, and thus, for every $0 \leq i \leq 2$ and every $u \in A_i$, it holds that
\begin{align*}
\min\{d^-_G(u,A_{i-1}),d^+_G(u,A_{i+1})\}&\geq \min\{d^-_{\tilde{G}}(u,A_{i-1}),d^+_{\tilde{G}}(u,A_{i+1})\}\\
&\geq \min\{d^-_{\tilde{G}}(u,\tilde{V}_{i-1})-|\tilde{V}_{i-1}\setminus A_{i-1}|,d^+_{\tilde{G}}(u,\tilde{V}_{i+1})-|\tilde{V}_{i+1}\setminus A_{i+1}|\}\\
&\geq \left(\textstyle\frac{1}{3} -4\varepsilon - 15 \varepsilon^2\right) n-15\varepsilon n=\left(\textstyle\frac{1}{3} -19\varepsilon - 15 \varepsilon^2\right) n.
\end{align*}
Therefore, $A_0 \cup A_1 \cup A_2 \cup B_0$ is a partition of $V$ for which the following conditions hold. 
\begin{description}
\item [(1')] $(1/3 - 15 \varepsilon - 15 \varepsilon^2) n \leq |A_0|, |A_1|, |A_2| \leq (1/3 + 5 \varepsilon^2) n$;

\item [(2')] $\min \{d_G^+(u, A_{i+1}), d_G^-(u, A_{i-1})\} \geq (1/3 - 19 \varepsilon - 15 \varepsilon^2) n$ for every $0 \leq i \leq 2$ and every $u \in A_i$;

\item [(3')] $|B_0| \leq 45 \varepsilon n + 5 \varepsilon^2 n$;   

\end{description}

For as long as there exists a vertex $u \in B_0$ and an index $0 \leq i \leq 2$ such that 
$$
\min \{d_G^+(u, A_{i+1}), d_G^-(u, A_{i-1})\} \geq (1/3 - \sqrt[3]{\varepsilon}) n,
$$
remove $u$ from $B_0$ and add it to $A_i$ (observe that if such an $i$ exists, then it is unique, since for every $0 \leq j \leq 2$ it holds that $\min \{d_G^+(u, A_j), d_G^-(u, A_j)\} \leq |A_j|/2 < (1/3 - \sqrt[3]{\varepsilon}) n$, for a sufficiently small $\varepsilon > 0$).
Note that the $A_i$'s are updated in every step of this process and $\min \{d_G^+(u, A_{i+1}), d_G^-(u, A_{i-1})\}$ is considered with respect to those updated sets. Once this process is over, denote the resulting partition of $V$ by $V_0 \cup V_1 \cup V_2 \cup B$, where $B \subseteq B_0$ and $V_i \supseteq A_i$ for every $i \in \{0,1,2\}$. Observe that, for sufficiently small $\varepsilon$, this new partition satisfies the following properties: 
\begin{description}
\item [(1)] $(1/3 - 20 \varepsilon) n \leq |V_0|, |V_1|, |V_2| \leq (1/3 + 48 \varepsilon) n$;

\item [(2)] $\min \{d_G^+(u, V_{i+1}), d_G^-(u, V_{i-1})\} \geq (1/3 - \sqrt[3]{\varepsilon}) n$ for every $0 \leq i \leq 2$ and every $u \in V_i$;

\item [(3)] $|B| \leq 48 \varepsilon n$;   

\item [(4)] For every $u \in B$ and every $i \in \{0,1,2\}$ it holds that $d_G^+(u, V_{i+1}) < (1/3 - \sqrt[3]{\varepsilon}) n$ or $d_G^-(u, V_{i-1}) < (1/3 - \sqrt[3]{\varepsilon}) n$.
\end{description}

Using the minimality of $G$, we will prove that in fact this partition satisfies stronger conditions.

\begin{lemma} \label{lem::badEdges}
Let $G, V_0, V_1, V_2$, and $B$ be as above. Then
\begin{description}
\item [(a)] $V_i$ is independent in $G$ for every $0 \leq i \leq 2$; 

\item [(b)] $\vec{xy} \notin E$ for every $0 \leq i \leq 2$, $x \in V_i$, and $y \in V_{i-1}$;

\item [(c)] $B = \emptyset$.
\end{description}
\end{lemma}

\begin{proof}
Starting with (a), suppose for a contradiction that $V_i$ is not independent for some $0 \leq i \leq 2$. Fix an arbitrary directed edge $\vec{xy} \in E(G[V_i])$. Let $Z = \{z \in V : z \notin N_G(x) \cup N_G(y)\}$ denote the set of common non-neighbours of $x$ and $y$. Observe that
\begin{align} \label{eq::I3increase}
I_3(G \setminus \vec{xy}) &= I_3(G) + |Z| \leq I_3(G) + |V_i| + |B| + 2 (48 \varepsilon + \sqrt[3]{\varepsilon}) n \nonumber \\
&\leq I_3(G) + (1/3 + 48\varepsilon) n + 48\varepsilon n+2 (48 \varepsilon + \sqrt[3]{\varepsilon}) n < I_3(G) + (1/3 + 3 \sqrt[3]{\varepsilon}) n,
\end{align}
where the first inequality holds by properties (1) and (2), the second inequality holds by properties (1) and (3), and the last inequality holds for a sufficiently small $\varepsilon>0$.

On the other hand, let $W_1 = N_G^+(x, V_{i+1}) \cap N_G^+(y, V_{i+1})$ and let $W_2 = N_G^-(x, V_{i-1}) \cap N_G^-(y, V_{i-1})$. Then
\begin{align} \label{eq::T3decrease}
T_3(G \setminus \vec{xy}) &\leq T_3(G) - |W_1| - |W_2| \leq T_3(G) - 2[(1/3 - \sqrt[3]{\varepsilon}) n - (48 \varepsilon + \sqrt[3]{\varepsilon}) n] \nonumber \\
&<T_3(G) - (1/3 + 3 \sqrt[3]{\varepsilon})n,
\end{align}
where the second inequality holds by properties (1) and (2) and the last inequality holds for a sufficiently small $\varepsilon>0$. 

Combining~\eqref{eq::I3increase} and~\eqref{eq::T3decrease} we conclude that
$$
T_3(G \setminus \vec{xy}) + I_3(G \setminus \vec{xy}) < T_3(G) + I_3(G)
$$
contrary to the assumed minimality of $G$.

Next, we prove (b). Let $E' = \{\vec{xy} \in E : x \in V_0, y \in V_2\} \cup \{\vec{xy} \in E : x \in V_1, y \in V_0\} \cup \{\vec{xy} \in E : x \in V_2, y \in V_1\}$. Suppose for a contradiction that $E' \neq \emptyset$. Let $\vec{xy} \in E'$ be arbitrary and let $G'$ be the oriented graph obtained from $G$ by reversing the direction of $\vec{xy}$, that is, $G' = (G \setminus \vec{xy}) \cup \vec{yx}$. Clearly 
\begin{equation} \label{eq::I3nochange}
I_3(G') = I_3(G).
\end{equation} 
Assume without loss of generality that $x \in V_1$ and $y \in V_0$. It follows by properties (1) and (2) that, for a sufficiently small $\varepsilon > 0$,
\begin{equation} \label{eq::T3G}
T_3(\{x, y\}, G) \geq |N_G^+(x, V_2) \cap N_G(y, V_2)| \geq (1/3 - \sqrt[3]{\varepsilon}) n - (48 \varepsilon + \sqrt[3]{\varepsilon}) n > (1/3 - 3 \sqrt[3]{\varepsilon}) n.
\end{equation}
On the other hand, it follows by (a) and by properties (1), (2) and (3) that, for a sufficiently small $\varepsilon > 0$, 
\begin{equation} \label{eq::T3G'}
T_3(\{x, y\}, G') \leq |B| + d_G^-(x, V_2) + d_G^+(y, V_2) \leq 48 \varepsilon n + 2 (48 \varepsilon + \sqrt[3]{\varepsilon}) n < (1/3 - 3 \sqrt[3]{\varepsilon}) n.
\end{equation}
Combining~\eqref{eq::I3nochange}, \eqref{eq::T3G} and~\eqref{eq::T3G'} we conclude that
$$
T_3(G') + I_3(G') < T_3(G) + I_3(G)
$$
contrary to the assumed minimality of $G$.

Finally, we prove (c). We will first prove the following simple claim.
\begin{claim} \label{cl::smallT3plusI3}
$T_3(u, G) + I_3(u, G) \leq \binom{|V_i| + |B|}{2}$ for every vertex $u \in V$ and every $i \in \{0,1,2\}$.
\end{claim}

\begin{proof}
Suppose for a contradiction that there exists some vertex $u \in V$ and an $i \in \{0,1,2\}$ such that $T_3(u, G) + I_3(u, G) > \binom{|V_i| + |B|}{2}$. Let $G'$ be the oriented graph which is obtained from $G \setminus \{u\}$ by adding a new vertex $u'$ such that $N_{G'}^+(u') = V_{i+1}$ and $N_{G'}^-(u') = V_{i-1}$. Note that
$$
T_3(u', G') + I_3(u', G') \leq |\{\vec{xy} \in E : x \in V_{i-1}, y \in V_{i+1}\}| + \binom{|V_i| + |B|}{2}  = \binom{|V_i| + |B|}{2},
$$
where the inequality holds by (a) and the equality holds by (b). Hence
\begin{align*}
T_3(G') + I_3(G') &= T_3(G) + I_3(G) - (T_3(u, G) + I_3(u, G)) + (T_3(u', G') + I_3(u', G')) \\ 
&< T_3(G) + I_3(G)
\end{align*}
contrary to the assumed minimality of $G$.
\end{proof}

Now, suppose for a contradiction that $B \neq \emptyset$. In the remainder of the proof, we will use the notation $\binom{x}{2}$ for any real $x$, not necessarily a non-negative integer, in the sense of $x(x-1)/2$.
Note that for every $0 \leq i \leq 2$ and any real number $\alpha \geq 0$, it follows by Property (1) that
\begin{align*}
\binom{|V_i| + 48 \varepsilon n}{2} - \binom{|V_i| - \alpha n}{2} &= (\alpha+48\varepsilon)n\left(2|V_i|+(48\varepsilon-\alpha)n-1\right)/2\\
&< (\alpha+48\varepsilon)n\left(|V_i|+24\varepsilon n\right)\leq(\alpha+48\varepsilon)\left(1/3+72\varepsilon\right)n^2
\end{align*}
and hence, by Property (3),
\begin{equation}\label{eq:48}
\binom{|V_i| - \alpha n}{2} + (\alpha+48\varepsilon)\left(1/3+72\varepsilon\right)n^2 > \binom{|V_i| + 48 \varepsilon n}{2} \geq \binom{|V_i| + |B|}{2}.
\end{equation}
Let $u \in B$ be an arbitrary vertex. 
Let $0 \leq i \leq 2$ be such that
$$
d_G(u,V_i) = \min\{d_G(u,V_0), d_G(u,V_1), d_G(u,V_2)\}.
$$
We distinguish between the following three cases.

\begin{description}

\item [Case 1:] $d_G(u,V_i) < 100 \varepsilon n$.

It follows by Property (4) that
$\min \{d_G^+(u, V_{i+1}), d_G^-(u, V_{i-1})\} < (1/3 - \sqrt[3]{\varepsilon}) n$. Assume that $d_G^+(u, V_{i+1}) < (1/3 - \sqrt[3]{\varepsilon}) n$ (the complementary case $d_G^-(u, V_{i-1}) < (1/3 - \sqrt[3]{\varepsilon}) n$ can be handled  similarly).
We further divide this case into the following three sub-cases.
\begin{description}
\item [Case a:] $d_G(u, V_{i-1}) \leq n/6$.
Then, using (a) and properties (1) and (3), we obtain
\begin{align*}
T_3(u, G) + I_3(u,G) &\geq I_3(u,G) \geq \binom{|V_i| - d_G(u,V_{i})}{2} + \binom{|V_{i-1}| - d_G(u, V_{i-1})}{2} \\
&\geq \binom{|V_i| - 100 \varepsilon n}{2} + \binom{(1/6 - 20 \varepsilon) n}{2}. 
\end{align*}
Hence, for a sufficiently small $\varepsilon > 0$, 
$$
T_3(u, G) + I_3(u,G)>\binom{|V_i| - 100\varepsilon n}{2} + 148\varepsilon\left(\frac{1}{3}+72\varepsilon\right) n^2>\binom{|V_i| + |B|}{2},
$$
where the second inequality holds by~\eqref{eq:48} for $\alpha = 100 \varepsilon$;
this contradicts the assertion of Claim~\ref{cl::smallT3plusI3}.

\item [Case b:] $d_G(u, V_{i-1}) > n/6$ and $d_G^-(u, V_{i+1}) > 1000 \varepsilon n$.

Let $A_{i-1} = N_G(u, V_{i-1})$. It follows by properties (1) and (2) that, for a sufficiently small $\varepsilon>0$,
\begin{align*}
d_G^+(w, A_{i-1}) &\geq |A_{i-1}| - |V_{i-1}| + d_G^+(w, V_{i-1}) \\ 
&\geq n/6 - \left(1/3 + 48 \varepsilon \right) n + \left(1/3 - \sqrt[3]{\varepsilon}\right) n > \left(1/6 - 2 \sqrt[3]{\varepsilon}\right) n 
\end{align*}
holds for every $w \in N_G^-(u, V_{i+1})$. Hence
\begin{align*}
T_3(u, G) + I_3(u,G) &\geq \sum_{w \in N_G^-(u, V_{i+1})} d_G^+(w, A_{i-1}) + \binom{|V_i| - d_G(u, V_{i})}{2} \\
&\geq 1000 \varepsilon (1/6 - 2 \sqrt[3]{\varepsilon}) n^2 + \binom{|V_i| - 100 \varepsilon n}{2}. 
\end{align*}
Therefore, for a sufficiently small $\varepsilon>0$, 
$$T_3(u, G) + I_3(u,G)>\binom{|V_i| - 100\varepsilon n}{2} + 148\varepsilon\left(\frac{1}{3}+72\varepsilon\right) n^2>\binom{|V_i| + |B|}{2},
$$
where the second inequality holds by~\eqref{eq:48} for $\alpha = 100 \varepsilon$;
this contradicts the assertion of Claim~\ref{cl::smallT3plusI3}.

\item [Case c:] $d_G^-(u, V_{i+1}) \leq 1000 \varepsilon n$.
Then, using Property (1) and our assumption that $d_G^+(u, V_{i+1}) < (1/3 - \sqrt[3]{\varepsilon}) n$, we obtain, for a sufficiently small $\varepsilon>0$,
\begin{align*}
d_G(u, V_{i+1}) &= d_G^-(u, V_{i+1}) + d_G^+(u, V_{i+1}) < \left(1/3 - \sqrt[3]{\varepsilon} + 1000 \varepsilon \right) n \\
&\leq |V_{i+1}| - (\sqrt[3]{\varepsilon} - 20 \varepsilon - 1000 \varepsilon) n < |V_{i+1}| - \sqrt[3]{\varepsilon} n/2.
\end{align*}
Hence
\begin{align*}
T_3(u, G) + I_3(u,G) &\geq I_3(u,G) \geq \binom{|V_i| - d_G(u, V_{i})}{2} +  \binom{|V_{i+1}| - d_G(u, V_{i+1})}{2} \\
&\geq \binom{|V_i| - 100 \varepsilon n}{2} + \binom{\sqrt[3]{\varepsilon} n/2}{2}. 
\end{align*}
Therefore, for a sufficiently small $\varepsilon>0$, 
$$T_3(u, G) + I_3(u,G)>\binom{|V_i| - 100\varepsilon n}{2} + 148\varepsilon\left(\frac{1}{3}+72\varepsilon\right) n^2>\binom{|V_i| + |B|}{2},
$$
where the second inequality holds by~\eqref{eq:48} for $\alpha = 100 \varepsilon$;
this contradicts the assertion of Claim~\ref{cl::smallT3plusI3}.
\end{description}

\item [Case 2:] $100 \varepsilon n \leq d_G(u,V_i) < 10^{-4} n$.

We further divide this case into the following three sub-cases.
\begin{description}
\item [Case a:] $d_G(u, V_{i-1}) \leq |V_{i-1}| - n/100$ or $d_G(u, V_{i+1}) \leq |V_{i+1}| - n/100$.
Assume without loss of generality that $d_G(u, V_{i-1}) \leq |V_{i-1}| - n/100$ (the complementary case $d_G(u, V_{i+1}) \leq |V_{i+1}| - n/100$ is analogous). Then, using Property (1), we obtain
\begin{align*}
T_3(u, G) + I_3(u,G) &\geq I_3(u,G) \geq \binom{|V_i| - d_G(u,V_{i})}{2} + \binom{|V_{i-1}| - d_G(u, V_{i-1})}{2} \\
&\geq \binom{|V_i| - 10^{-4} n}{2} + \binom{n/100}{2}. 
\end{align*}
Hence, for a sufficiently small $\varepsilon>0$, 
$$T_3(u, G) + I_3(u,G)>\binom{|V_i| - 10^{-4} n}{2} + (10^{-4}+48\varepsilon)\left(\frac{1}{3}+72\varepsilon\right)n^2>\binom{|V_i| + |B|}{2},
$$
where the second inequality holds by~\eqref{eq:48} for $\alpha = 10^{-4}$;
this contradicts the assertion of Claim~\ref{cl::smallT3plusI3}.

\item [Case b:] $d_G^-(u, V_{i+1}) \geq n/100$ or $d_G^+(u, V_{i-1}) \geq n/100$.
Assume without loss of generality that $d_G^-(u, V_{i+1}) \geq n/100$ (the complementary case $d_G^+(u, V_{i-1}) \geq n/100$ is analogous). Let $A_{i-1} = N_G(u, V_{i-1})$. By Case (a) we may assume that $|A_{i-1}| > |V_{i-1}| - n/100$. It then follows by Property (2) that, for a sufficiently small $\varepsilon>0$,
\begin{align*}
d_G^+(w, A_{i-1}) &\geq |A_{i-1}| - |V_{i-1}| + d_G^+(w, V_{i-1}) \\ 
&> |V_{i-1}| - n/100 - |V_{i-1}| + (1/3 - \sqrt[3]{\varepsilon}) n 
> n/4
\end{align*}
holds for every $w \in N_G^-(u, V_{i+1})$. Hence
\begin{align*}
T_3(u, G) + I_3(u,G) &\geq \sum_{w \in N_G^-(u, V_{i+1})} d_G^+(w, A_{i-1}) + \binom{|V_i| - d_G(u, V_{i})}{2} \\
&\geq \frac{n}{100} \cdot \frac{n}{4} + \binom{|V_i| - 10^{-4} n}{2}. 
\end{align*}
Therefore, for a sufficiently small $\varepsilon>0$, 
$$T_3(u, G) + I_3(u,G)>\binom{|V_i| - 10^{-4} n}{2} + (10^{-4}+48\varepsilon)\left(\frac{1}{3}+72\varepsilon\right)n^2>\binom{|V_i| + |B|}{2},
$$
where the second inequality holds by~\eqref{eq:48} for $\alpha = 10^{-4}$;
this contradicts the assertion of Claim~\ref{cl::smallT3plusI3}.

\item [Case c:] $d_G^+(u, V_{i+1}) \geq |V_{i+1}| - n/50$ and $d_G^-(u, V_{i-1}) \geq |V_{i-1}| - n/50$.
It then follows by Property (2) that, for a sufficiently small $\varepsilon>0$,
\begin{align*}
d_G^+(w, N_G^+(u, V_{i+1})) &\geq |N_G^+(u, V_{i+1})| - |V_{i+1}| + d_G^+(w, V_{i+1}) \\ 
&\geq |V_{i+1}| - n/50 - |V_{i+1}| + (1/3 - \sqrt[3]{\varepsilon}) n 
> 2n/7
\end{align*}
and, similarly,
$$
d_G^-(w, N_G^-(u, V_{i-1})) > 2n/7
$$
hold for every $w \in N_G(u, V_i)$. It then follows by Property (1) that
\begin{align*}
T_3(u, G) + I_3(u,G) &\geq d_G(u,V_i) \cdot 2n/7 +d_G(u,V_i) \cdot 2n/7 + \binom{|V_i| - d_G(u,V_i)}{2} \\
&=\binom{|V_i|}{2}+\left(\frac{4n}{7}-|V_i|+\frac{d_G(u,V_i)+1}{2}\right)d_G(u,V_i)\\
&>\binom{|V_i|}{2}+\left(\frac{4n}{7}-\left(\frac{1}{3} + 48 \varepsilon\right)n \right)100 \varepsilon n.
\end{align*}
Hence, for a sufficiently small $\varepsilon>0$, 
$$T_3(u, G) + I_3(u,G)>\binom{|V_i|}{2} + 48\varepsilon\left(\frac{1}{3}+72\varepsilon\right)n^2>\binom{|V_i| + |B|}{2},
$$
where the second inequality holds by~\eqref{eq:48} for $\alpha = 0$;
this contradicts the assertion of Claim~\ref{cl::smallT3plusI3}.
\end{description}

\item [Case 3:] $d_G(u,V_i) \geq 10^{-4} n$. 

Denote
$$
d_* = \frac{1}{2} \left(d_G(u,V_{i-1}) + d_G(u,V_{i+1})\right).
$$
It follows by the minimality of $d_G(u,V_i)$ that
\begin{align*} 
d_G&(u,V_i) \left(d^+_G(u,V_0) d^-_G(u,V_1) + d^+_G(u,V_1) d^-_G(u,V_2) + d^+_G(u,V_2) d^-_G(u,V_0)\right) \\
&\leq d_G(u,V_2) d^+_G(u,V_0) d^-_G(u,V_1) + d_G(u,V_0) d^+_G(u,V_1) d^-_G(u,V_2) + d_G(u,V_1) d^+_G(u,V_2) d^-_G(u,V_0)\\
& = d_G(u,V_0) d_G(u,V_1) d_G(u,V_2) - d^+_G(u,V_0) d^+_G(u,V_1) d^+_G(u,V_2) - d^-_G(u,V_0) d^-_G(u,V_1) d^-_G(u,V_2)\\
& \leq  d_G(u,V_0) d_G(u,V_1) d_G(u,V_2) 
\end{align*}
and thus
\begin{align}
& d^+_G(u,V_0) d^-_G(u,V_1) + d^+_G(u,V_1) d^-_G(u,V_2) + d^+_G(u,V_2) d^-_G(u,V_0)\nonumber\\
&\leq d_G(u,V_{i-1}) d_G(u,V_{i+1}).\label{eq:dd}
\end{align}
For every $0\leq i\leq 2$, let
$$
{\mathcal D}_{i,i+1} := \{(x,y) \in (N_G(u,V_i)\times N_G(u,V_{i+1})) \setminus (N^+_G(u,V_i)\times N^-_G(u,V_{i+1})) : \vec{xy}\in E\}.
$$
For every $0 \leq i \leq 2$, it follows by properties (1) and (2) that
$$
|V_i|\,|V_{i+1}| - |E_G(V_i,V_{i+1})|=\sum_{v\in V_i}\left(|V_{i+1}|-d^+_G(v,V_{i+1})\right)\leq |V_i|(48\varepsilon+\sqrt[3]{\varepsilon})n
$$
and thus
$$
|{\mathcal D}_{i,i+1}|\geq d_G(u,V_i)d_G(u,V_{i+1})-d^+_G(u,V_i)d^-_G(u,V_{i+1})-|V_i|(48\varepsilon+\sqrt[3]{\varepsilon})n.
$$ 
Therefore, using \eqref{eq:dd}, we obtain
\begin{align} \label{eq::T3}
T_3(u, G) \geq &|{\mathcal D}_{0,1}|+|{\mathcal D}_{1,2}|+|{\mathcal D}_{2,0}|\nonumber\\
\geq &\left(d_G(u,V_0) d_G(u,V_1) + d_G(u,V_1) d_G(u,V_2) + d_G(u,V_2) d_G(u,V_0)\right) \nonumber\\
&- \left(d^+_G(u,V_0) d^-_G(u,V_1) + d^+_G(u,V_1) d^-_G(u,V_2) + d^+_G(u,V_2) d^-_G(u,V_0)\right) \nonumber\\
&-(|V_0|+|V_1|+|V_2|)(48 \varepsilon +\sqrt[3]{\varepsilon})  n \nonumber\\
\geq & 2 d_G(u,V_i) d_* - (48\varepsilon +\sqrt[3]{\varepsilon}) n^2
\end{align}
It follows by the convexity of the function $x \mapsto \binom{x}{2}$ that
\begin{align} \label{eq::I3}
I_3(u, G) &\geq \binom{|V_i| - d_G(u,V_i)}{2} + \binom{|V_{i-1}| - d_G(u,V_{i-1})}{2} + \binom{|V_{i+1}| - d_G(u,V_{i+1})}{2} \nonumber\\
&\geq \binom{|V_i| - d_G(u,V_i)}{2} + 2\binom{\frac{1}{2}(|V_{i-1}|+|V_{i+1}|) - d_*}{2}.
\end{align}
Combining~\eqref{eq::T3} and~\eqref{eq::I3}, and using Property (1), we conclude that
\begin{align*}
T_3&(u, G) + I_3(u,G)\\
\geq & \binom{|V_i| - d_G(u,V_i)}{2} + 2\binom{\frac{1}{2}(|V_{i-1}|+|V_{i+1}|) - d_*}{2} + 2 d_G(u,V_i) d_* - (48\varepsilon + \sqrt[3]{\varepsilon}) n^2\\
= &\binom{|V_i|}{2} + 2\binom{\frac{1}{2}(|V_{i-1}|+|V_{i+1}|) - d_*-d_G(u,V_i) }{2} \\
&+ d_G(u,V_i) \left(|V_{i-1}|+|V_{i+1}|-|V_i|-\textstyle\frac{1}{2}d_G(u,V_i) - \textstyle\frac{1}{2} \right)- (48\varepsilon + \sqrt[3]{\varepsilon})  n^2\\
\geq &\binom{|V_i|}{2} - \frac{1}{4} + 10^{-4} n \left(\frac{1}{6} n - 112 \varepsilon n - \frac{1}{2} \right)- (48\varepsilon + \sqrt[3]{\varepsilon})  n^2.
\end{align*}
Hence, for a sufficiently small $\varepsilon>0$, 
$$
T_3(u, G) + I_3(u,G)>\binom{|V_i|}{2} + 48\varepsilon\left(\frac{1}{3}+72\varepsilon\right)n^2>\binom{|V_i| + |B|}{2},
$$
where the second inequality holds by~\eqref{eq:48} for $\alpha = 0$;
this contradicts the assertion of Claim~\ref{cl::smallT3plusI3}.
\qedhere\end{description}     
\end{proof}

\begin{lemma} \label{lem::properties145}
$G$ satisfies properties (i), (iv) and (v).
\end{lemma}

\begin{proof}
Recall that $V_0 \cup V_1 \cup V_2 = V$ holds by Lemma~\ref{lem::badEdges}(c). Let 
$$
m_1 = \binom{|V_0|}{3} + \binom{|V_1|}{3} + \binom{|V_2|}{3} 
$$
and let 
$$
m_2 = \binom{\lfloor n/3 \rfloor}{3} + \binom{\lfloor (n + 1)/3 \rfloor}{3} + \binom{\lfloor (n + 2)/3 \rfloor}{3}.
$$
Observe that $m_1 \geq m_2$ and that $m_1 = m_2$ if and only if $G$ satisfies Property (i). Since $G$ satisfies Property (ii) by Lemma~\ref{lem::badEdges}(a), it follows that
$$
T_3(G) + I_3(G) \geq I_3(G) \geq m_1.
$$
Moreover
$$
T_3({\mathcal B}_n) + I_3({\mathcal B}_n) = I_3({\mathcal B}_n) = m_2.
$$
It thus follows by the assumed minimality of $G$ that it satisfies Property (i). 

Now suppose for a contradiction that $G$ does not satisfy Property (iv). That is, there exist two distinct indices $i, j \in \{0,1,2\}$ and three vertices $u \in V_i$ and $v, w \in V_j$ such that both $v$ and $w$ are non-neighbours of $u$. Since $G$ satisfies Property (ii), $u, v, w$ form an independent triple in $G$. Hence 
$$
T_3(G) + I_3(G) \geq I_3(G) \geq 1 + m_1 > m_2 = T_3({\mathcal B}_n) + I_3({\mathcal B}_n) 
$$
contrary to the assumed minimality of $G$.

Finally, suppose for a contradiction that $G$ does not satisfy Property (v). 
By Lemma \ref{lem::badEdges}(b), this implies that there exist vertices $x \in V_0$, $y \in V_1$ and $z \in V_2$ such that $x, y, z$ form an independent triple in $G$. Hence 
$$
T_3(G) + I_3(G) \geq I_3(G) \geq 1 + m_1 > m_2 = T_3({\mathcal B}_n) + I_3({\mathcal B}_n)
$$ 
contrary to the assumed minimality of $G$.  
\end{proof}

Since $B = \emptyset$ by Lemma~\ref{lem::badEdges}(c), $G$ satisfies properties (i), (iv) and (v) by Lemma~\ref{lem::properties145}, $G$ satisfies Property (ii) by Lemma~\ref{lem::badEdges}(a), and $G$ satisfies Property (iii) by Lemma~\ref{lem::badEdges}(b), the assertion of Theorem~\ref{thm:exact} follows.
\end{proof}

\section{Concluding remarks}\label{sec:concluding}

The problems that were considered in this paper can be extended in various directions. 
For example, it would be interesting to determine all possible pairs $(t(G), i(G))$. 
More formally, let $\bar{\mathcal{S}}$ be the set of all ordered pairs $(t,i) \in [0,1]^2$ for which there exists a sequence of oriented graphs $(G_n)_{n=1}^{\infty}$ such that $\lim_{n \to \infty} |V(G_n)| = \infty$, $\lim_{n \to \infty} t(G_n) = t$ and $\lim_{n \to \infty} i(G_n) = i$. We would like to determine the set $\bar{\mathcal{S}}$. 
Note first that the set $\mathcal{S}$ which corresponds to the undirected case (i.e., it is defined the same as $\bar{\mathcal{S}}$ except that $G_n$ is undirected for every $n$ and $t(G_n)$ stands for the number of triangles in $G_n$) was completely determined in~\cite{HLNPS}. Determining $\bar{\mathcal{S}}$ seems to be more challenging, but we are able to prove some partial results. 
First, since every undirected graph has an acyclic orientation, it immediately follows that $\bar{\mathcal{S}} \supseteq \mathcal{S}$. 
Trying to determine the lower envelope of $\bar{\mathcal{S}}$, for every $n$-vertex oriented graph $G$, it follows by Theorem~\ref{main} that
$$
t(G)+i(G) \geq \textstyle\frac{1}{9} - o_n(1),
$$
and by Proposition~\ref{prop:refined} that
$$
\textstyle\frac{2}{3}t(G)+i(G) \geq \textstyle\frac{1}{10} - o_n(1).
$$ 
Moreover, Observation~\ref{obs:blowup} and Theorem~\ref{main} imply that
$$
\min \{i(G) : G \textrm{ is an oriented graph on } n \textrm{ vertices for which } t(G) = 0\} = \textstyle\frac{1}{9} - o_n(1).
$$
Note that, using the removal lemma, one can also deduce the latter result from results in~\cite{DHMNS} and~\cite{pikhurko-vaughan}.

Finally, using a similar argument to the one used in the proof of Theorem~\ref{main} (but with oriented graphs on $5$ vertices instead of $4$) we believe that it is possible to show that
$$
\min \{t(G) : G \textrm{ is an oriented graph on } n \textrm{ vertices for which } i(G)=0\} \geq \textstyle\frac{3}{16} - o_n(1).
$$
Note that this bound is tight asymptotically as is demonstrated by the disjoint union of $K_{\lfloor n/2 \rfloor}$ and $K_{\lceil n/2 \rceil}$, where each edge is oriented independently at random with probability $1/2$ for each direction. A quick check with flagmatic (with oriented graphs on $5$ vertices) yields an approximate bound which is very close to $3/16$. Rounding it to a precise bound (and possibly also proving stability and uniqueness) is left for future work.

\bigskip

Similarly to the case of undirected graphs (as in, e.g., \cite{DHMNS} and~\cite{pikhurko-vaughan}), all the problems that were considered in this paper can be extended to larger independent sets and transitive tournaments. In particular, consider the following problems. Let $G = (V,E)$ be an oriented graph on $n$ vertices and let $k \geq 2$ be an integer. Let $T_k(G)$ denote the family of all $k$-sets $X \in \binom{V}{k}$ for which $G[X]$ is a transitive tournament and let $t_k(G) = |T_k(G)|/\binom{n}{k}$. Similarly, let $I_k(G)$ denote the family of all independent $k$-sets $X \in \binom{V}{k}$ and let $i_k(G) = |I_k(G)|/\binom{n}{k}$. Let $f(k, \ell, n) = \min \{t_k(G) : G \textrm{ is an oriented graph on } n \textrm{ vertices with } i_{\ell}(G) = 0\}$ and let $g(k, \ell, n) = \min \{i_k(G) : G \textrm{ is an oriented graph on } n \textrm{ vertices with } t_{\ell}(G) = 0\}$. It is not hard to see that the limits $f(k, \ell) := \lim_{n \to \infty} f(k, \ell, n)$ and $g(k, \ell) := \lim_{n \to \infty} g(k, \ell, n)$ exist for all $k$ and $\ell$. The last two results listed in the previous paragraph can then be restated as $g(3, 3) = 1/9$ and $f(3, 3) \approx 3/16$. Moreover, it is evident that $g(k, 2) = 1$ for every $k$ and, using Tur\'an's Theorem and the removal lemma, it is not hard to prove that $g(2, \ell) = 1/d$, where $d = d(\ell)$ is the so-called Ramsey number of the transitive tournament on $\ell$ vertices, i.e., it is the largest integer for which there exists an orientation $D$ of $K_d$ such that $|T_{\ell}(D)| = 0$. Note that the bounds $2^{\ell/2} \leq d(\ell) \leq 2^{\ell}$ are known, but determining $d(\ell)$ is, in general, an open problem (see, e.g., \cite{EM, Stearns}). Similarly, it is an easy consequence of Tur\'an's Theorem and the removal lemma that $f(2, \ell) = 1/(\ell - 1)$ for every $\ell \geq 2$. Moreover, it is not hard to prove by induction on $k$ that $f(k, 2) = k! \cdot 2^{- \binom{k}{2}}$ for every $k \geq 2$. It would be interesting to study $f(k, \ell)$ and $g(k, \ell)$ for additional  values of $k$ and $\ell$. It would also be interesting to study 
$$
\lim_{n \to \infty} \min \{t_k(G) + i_k(G) : G \textrm{ is an oriented graph on } n \textrm{ vertices}\}
$$ 
for every $k \geq 4$.

\subsection*{Acknowledgements}
We are grateful to the anonymous referee for many helpful comments.

\end{document}